\def\@evenhead{\footnotesize\thepage \hfil M.~Kawasaki, M.~Kimura, H.~Kodama, Y.~Matsuda, T.~Matsushita, and R.~Orita \hfil }
\newtheorem{thm}{Theorem}[section]
\newtheorem{theorem}[thm]{Theorem}
\newtheorem{prop}[thm]{Proposition}
\newtheorem{proposition}[thm]{Proposition}
\newtheorem{lem}[thm]{Lemma}
\newtheorem{lemma}[thm]{Lemma}
\newtheorem{corollary}[thm]{Corollary}
\newtheorem{prob}[thm]{Problem}
\theoremstyle{definition}
\newtheorem{definition}[thm]{Definition}
\newtheorem{example}[thm]{Example}
\theoremstyle{remark}
\newtheorem{remark}[thm]{Remark}
\numberwithin{equation}{section}
\newcommand{\QQ}{\mathbb{Q}}
\newcommand{\RR}{\mathbb{R}}
\newcommand{\ZZ}{\mathbb{Z}}
\newcommand{\NN}{\mathbb{N}}
\newcommand{\ee}{\epsilon}
\newcommand{\Ham}{\mathrm{Ham}}
\newcommand{\Diff}{\mathrm{Diff}^c}
\newcommand{\tHam}{\widetilde{\mathrm{Ham}}}
\newcommand{\supp}{\mathrm{supp}}
\newcommand{\Flux}{\mathrm{Flux}}
\newcommand{\tFlux}{\widetilde{\mathrm{Flux}}}
\newcommand{\Cal}{\mathrm{Cal}}
\newcommand{\bCal}{\underline{\mathrm{Cal}}}
\newcommand{\Ker}{\operatorname{Ker}}
\newcommand{\tKer}{\widetilde{\operatorname{Ker}}}
\newcommand{\Id}{\mathrm{Id}}
\newcommand{\TK}{\mathcal{M}}
\newcommand{\dg}{d}
\newcommand{\du}{\bar{d}}
\newcommand{\ts}{\tilde{s}}
\newcommand{\relmiddle}[1]{\mathrel{}\middle#1\mathrel{}}
\newsavebox{\@brx}
\newcommand{\llangle}[1][]{\savebox{\@brx}{\(\m@th{#1\langle}\)}%
  \mathopen{\copy\@brx\kern-0.5\wd\@brx\usebox{\@brx}}}
\newcommand{\rrangle}[1][]{\savebox{\@brx}{\(\m@th{#1\rangle}\)}%
  \mathclose{\copy\@brx\kern-0.5\wd\@brx\usebox{\@brx}}}
\keywords{the group of diffeomorphisms, the group of Hamiltonian diffeomorphisms, the universal covering, simplicity of groups}
\subjclass[2020]{20A05, 20E32, 51F30, 53D22, 53D40, 57S05}
\begin{document}


\title[Relative simplicity and Tsuboi's metric]{Relative simplicity of the universal coverings of transformation groups and Tsuboi's metric}

\author[M.~Kawasaki]{Morimichi Kawasaki}
\address[Morimichi Kawasaki]{Department of Mathematics, Faculty of Science, Hokkaido University, North 10, West 8, Kita-ku, Sapporo, Hokkaido 060-0810 Japan}
\email{kawasaki@math.sci.hokudai.ac.jp}

\author[M.~Kimura]{Mitsuaki Kimura}
\address[Mitsuaki Kimura]{Department of Mathematics, Osaka Dental University, 8-1 Kuzuha-hanazono-cho, Hirakata, Osaka 573-1121 Japan}
\email{mkimura@math.kyoto-u.ac.jp}

\author[H.~Kodama]{Hiroki Kodama}
\address[Hiroki Kodama]{
\begin{itemize}
    \item Center for Mathematical Engineering, Musashino University, 
    3-3-3 Ariake, Koto-ku, Tokyo 135-8181 Japan
    \item Interdisciplinary Theoretical and Mathematical Sciences Program, RIKEN, 2-1 Hirosawa, Wako-shi, Saitama 351-0198 Japan
\end{itemize}}
\email{kodamahiroki@gmail.com}

\author[Y.~Matsuda]{Yoshifumi Matsuda}
\address[Yoshifumi Matsuda]{Department of Mathematical Sciences, College of Science and Engineering, Aoyama Gakuin University, 5-10-1 Fuchinobe, Chuo-ku, Sagamihara, Kanagawa 252-5258 Japan}
\email{ymatsuda@math.aoyama.ac.jp}

\author[T.~Matsushita]{Takahiro Matsushita}
\address[Takahiro Matsushita]{Department of Mathematical Sciences, Faculty of Science, Shinshu University, Nagano 390-8621 Japan}
\email{matsushita@shinshu-u.ac.jp}

\author[R.~Orita]{Ryuma Orita} 
\address[Ryuma Orita]{Department of Mathematics, Faculty of Science, Niigata University, Niigata 950-2181 Japan}
\email{orita@math.sc.niigata-u.ac.jp}

\dedicatory{Dedicated to Professor Takashi Tsuboi on the occasion of his $70$th birthday}


\begin{abstract}
Many transformation groups on manifolds are simple, but their universal coverings are not. In the present paper, 
we study the concept of \emph{relatively simple group}, that is, a group with the maximum proper normal subgroup.
We show that many examples of universal coverings of transformation groups are relatively simple, 
including the universal covering $\widetilde{\mathrm{Ham}}(M,\omega)$ of the group of Hamiltonian diffeomorphisms of a closed symplectic manifold $(M,\omega)$. 

Tsuboi constructed a metric space $\mathcal{M}(G)$ for a simple group $G$. 
We generalize his construction to relatively simple groups, and study their large scale geometric structure. 
In particular, Tsuboi's metric space of $\widetilde{\mathrm{Ham}}(M, \omega)$ is not quasi-isometric to the half line for every closed symplectic manifold $(M,\omega)$.

\end{abstract}

\maketitle

\tableofcontents

\section{Introduction}
\subsection{Background}
Many transformation groups regarding manifolds are known to be simple groups. 
For example, the following groups are simple:
\begin{enumerate}
    \item The identity component $\mathrm{Diff}^c_0(M)$ of the group $\mathrm{Diff}^c(M)$ of diffeomorphisms on $M$ with compact supports.
    \item The kernel of the volume-flux homomorphism. This is a normal subgroup of the group of volume-preserving diffeomorphisms.
    \item The group of Hamiltonian diffeomorphisms $\Ham(M, \omega)$ for a closed symplectic manifold $(M, \omega)$.
\end{enumerate}

The simplicity of these groups is sometimes useful (for example, see \cite{EP93}, \cite{BK13}, \cite{BKS} and \cite{I18}).

We would like to consider an analogue of simplicity on the universal coverings of these groups since many interesting and important invariants are defined over the universal covering of transformation groups. 
As the first example, the symplectic (resp. volume) flux homomorphism is originally defined as a map from the universal covering of the identity component of the group of symplectomorphisms (resp. volume-preserving diffeomorphisms)\footnote{This map induces the map on the identity component of the group of symplectomorphisms and it is also called the symplectic flux homomorphism}.
As the second example, for an open symplectic manifold, we define the Calabi homomorphism as a real-valued map on the universal covering of the group of Hamiltonian diffeomorphisms (see Subsection \ref{symp basis}).
As the third example, after Entov--Polterovich's famous work \cite{EP03}, many researcher constructed partial Calabi quasi-morphisms (see Definition \ref{def_of_PCQ}) on the universal covering of the group of Hamiltonian diffeomorphisms (for instance, see \cite{PR}, \cite{FOOO} and \cite{En14}).
It is actually proved that some of them cannot be defined on the group of Hamiltonian diffeomorphisms (see \cite{O06}, \cite{FOOO}).
We note that partial Calabi quasi-morphisms are very useful to prove non-displaceability of a fiber of an integrable system (for instance see \cite{BEP}, \cite{EP09}, \cite{FOOO}, \cite{KO21}) and it is known that descendability of partial Calabi quasi-morphisms to the group of Hamiltonian diffeomorphisms is not necessary to obtain this application.

However, the universal covering of a transformation group is not simple in many cases.
This is because the fundamental group of the transformation group is a normal subgroup of the universal covering.

In the present paper, we use the following notion of relative simplicity in order to consider an analogue of simplicity on the universal covering of transformation groups:

\begin{definition} \label{definition relatively simple}
Let $G$ be a group. A proper normal subgroup $N$ is the \emph{maximum normal subgroup of $G$} if one of the following equivalent conditions is satisfied:
\begin{enumerate}[(1)]
\item Every proper normal subgroup $N_0$ of $G$ is contained in $N$.

\item For every $x \in G \setminus N$, there is no proper normal subgroup of $G$ containing $x$. In other words, every element of $G \setminus N$ is a normal generator of $G$.
\end{enumerate}

We say that a group $G$ is \emph{simple relative to $N$} if $N$ is the maximum normal subgroup. A group $G$ is \emph{relatively simple} if $G$ has the maximum normal subgroup.
\end{definition}

Clearly, the notion of relative simplicity is a generalization of simplicity of groups. Indeed, a group $G$ is simple if and only if $G$ is relatively simple to its trivial subgroup. 
We also note that the notion of relative simplicity was essentially considered by \cite{KLM} and \cite{FY22} (see Definition \ref{FY_def} and Theorems \ref{KLM_Lie}, \ref{FY_rel_simple}.

For other notions related to relative simplicity, see Appendix \ref{other_appen}.

Some of our results (Theorems \ref{auto_thm}, \ref{closed_lphofer} and \ref{open_lphofer}) may be folklore.
However, the notion of relative simplicity may be useful for understanding the proof of these results clearly.


\subsection{Relative simplicity of coverings of transformation groups}

Let $G$ be a group, and $N$ a normal subgroup of $G$. If $G$ is simple relative to $N$, then $G/N$ is a simple group, but the converse does not hold in general (consider $G = \ZZ \times (\ZZ / 2\ZZ)$ and $N = \ZZ \times \{ 0\}$). However, in the present paper, we provide a criterion to show the relative simplicity of $G$ from the simplicity of $G/N$ in the case that the projection $G \to G/N$ is a covering map between topological groups. Using this, we can show the relative simplicity by the fact that the quotient group is simple. To state the criterion precisely, we recall a few notions from general topology.

A topological space $X$ is \emph{Polish} if it is homeomorphic to a separable complete metric space. A \emph{Polish group} is a topological group whose underlying topological space is Polish. Then we prove the following theorem:

\begin{thm}\label{mtst_simple}
Let $p \colon G \to H$ be a continuous surjective homomorphism between topological groups, and $K$ the kernel of $p$. Assume that the following conditions hold:
\begin{enumerate}[$(1)$]
\item $H$ is simple.

\item There exists an increasing sequence of Polish subgroups
\[ H_0 \subset H_1 \subset H_2 \subset \cdots\]
of $H$ such that
\[ \bigcup_{n \in \NN} H_n = H.\]
The topology of $H_n$ is the subspace topology of $H$. Moreover, assume that for every $x \in G$, there exists $n \in \mathbb{N}$ such that $x$ is contained in the identity component $p^{-1}(H_n)_0$ of $p^{-1}(H_n)$.


\item $G$ is connected.

\item $K$ is a discrete subgroup of $G$.
\end{enumerate}
Then $G$ is simple relative to $K$.
\end{thm}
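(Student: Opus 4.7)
Take any $x \in G \setminus K$ and let $M$ be its normal closure in $G$; the goal is $M = G$. Since $H$ is simple and $p(x) \neq e$, every $h \in H$ can be written as a finite product $\prod_{i=1}^k c_i\, p(x)^{\epsilon_i}\, c_i^{-1}$ with $c_i \in H$ and $\epsilon_i \in \{\pm 1\}$. Lifting each $c_i$ arbitrarily to $\tilde c_i \in G$ produces an element of $M$ mapping to $h$, so $p(M) = H$, i.e.\ $MK = G$. Conditions (3) and (4) then imply that $K$, being a discrete normal subgroup of the connected group $G$, is central; hence $G/M \cong K/(M \cap K)$ is abelian, and the problem reduces to showing $K \subset M$. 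I would aim at the stronger statement that $M$ is open in $G$: a subgroup containing an open neighborhood of the identity is clopen, and connectedness of $G$ then forces $M = G$.

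The Polish filtration enters through the Baire category theorem. For each $k, n \in \NN$, let $\Sigma_k^{(n)} \subset H$ denote the set of products of at most $k$ conjugates $c\, p(x)^{\epsilon}\, c^{-1}$ with $c \in H_n$ and $\epsilon \in \{\pm 1\}$; each $\Sigma_k^{(n)}$ is symmetric and is the continuous image of the Polish space $H_n^k \times \{\pm 1\}^k$, hence analytic. Choose $n_0$ with $x \in G_{n_0}$ and $p(x) \in H_{n_0}$; the previous factorization uses only finitely many conjugators, which all lie in some single $H_{n'}$, so $H_{n_0} = \bigcup_{k, n' \in \NN}\bigl( H_{n_0} \cap \Sigma_k^{(n')} \bigr)$. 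The Baire category theorem in the Polish group $H_{n_0}$ produces $(k, n')$ for which $H_{n_0} \cap \Sigma_k^{(n')}$ is non-meager, and since analytic sets have the Baire property, the Pettis--Steinhaus theorem yields an open neighborhood $V$ of $e$ in $H_{n_0}$ with $V \subset \Sigma_{2k}^{(n')}$.

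The final step, which I expect to be the main technical obstacle, is converting this into an open subset of $G$ contained in $M$. Every $v \in V$ has a presentation $v = \prod c_i\, p(x)^{\epsilon_i}\, c_i^{-1}$ with $c_i \in H_{n'}$, and choosing lifts $\tilde c_i \in G$ yields a pointwise preimage $g_v = \prod \tilde c_i\, x^{\epsilon_i}\, \tilde c_i^{-1} \in M$ of $v$; however, the assignment $v \mapsto g_v$ is not a priori continuous. Here the discreteness of $K$ is essential: on a sufficiently small neighborhood $W$ of the identity in $G$, $p|_W$ is injective (since $WW^{-1} \cap K = \{e\}$ after shrinking), and this local injectivity, together with continuity of $p$, should provide a continuous local inverse that can be combined with the pointwise lift over a smaller open $V' \subseteq V$ to produce a continuous local section landing inside $M$. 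This exhibits an open neighborhood of $e$ in $M$; $M$ is then a clopen subgroup of $G$, and connectedness of $G$ forces $M = G$, establishing that $G$ is simple relative to $K$.
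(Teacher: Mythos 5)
There is a genuine gap, and it is exactly at the step you flag as ``the main technical obstacle.'' Discreteness of $K$ gives you a neighborhood $W$ of $e_G$ with $WW^{-1}\cap K=\{e\}$, hence injectivity of $p|_W$, but it does \emph{not} give you a continuous local section of $p$: the theorem does not assume $p$ is open, so $p|_W$ need not be a homeomorphism onto an open set, and $p(W)$ need not even be a neighborhood of $e_H$ (the paper stresses, in the remarks around the flux conjecture and Kislev's example, that these ``universal coverings'' can fail to be genuine covering maps, which is precisely why the theorem is stated with hypothesis (2) rather than with ``$p$ is a covering''). Even granting a continuous section $s\colon V'\to G$, two further problems remain: $s(v)$ and your pointwise lift $g_v\in M$ differ by an element of $K$ that is not known to lie in $M$, so $s$ need not land in $M$; and the image of a continuous section is not open in $G$ anyway. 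Finally, your neighborhood $V$ is open only in $H_{n_0}$, which need not be open in $H$, so at best you could hope for openness of $M\cap G_{n_0}$ in $G_{n_0}=p^{-1}(H_{n_0})$ --- and then the clopen-plus-connectedness argument in $G$ does not close. Tellingly, you never use the second half of hypothesis (2) (that every element of $G$ lies in some identity component $p^{-1}(H_n)_0$); that assumption exists precisely to bridge this last step, and the authors explicitly pose as an open problem whether the theorem survives without it.

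The paper's proof sidesteps the lifting issue by running the Baire-category argument \emph{upstairs} rather than downstairs: it first shows $H$ is separable, deduces that $K$ is countable, and uses the fiber-bundle proposition to conclude that each $G_m=p^{-1}(H_m)$ is itself a Polish group. Then the sets $U(G_l,x,n)\cap G_m$ of products of conjugates of $x^{\pm1}$ inside $G$ are analytic in $G_m$, the Baire category theorem (using countability of $K$ to absorb the cosets) produces a non-meager one, and the Pettis-type argument gives an open subgroup of $G_m$ contained in the normal closure of $x$. This open subgroup contains $(G_m)_0$, and hypothesis (2) then says every element of $G$ lies in some $(G_m)_0$, finishing the proof without ever constructing a section of $p$. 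If you want to salvage your outline, you should replace the lifting step by this transfer of the Polish structure to $p^{-1}(H_n)$ and invoke the identity-component half of hypothesis (2) at the end.
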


Note that Theorem \ref{mtst_simple} deduces the following fact (see \cite[Lemma 3.4]{KLM}). Recall that a simple Lie group is a connected non-commutative Lie group which has no non-trivial connected normal subgroup.

\begin{thm}\label{KLM_Lie}
Let $G$ be a simple Lie group and $Z(G)$ the center of $G$.
Then $G$ is simple relative to $Z(G)$.
\end{thm}

It is fairly easy to show that several examples considered in this paper satisfy the assumptions of Theorem \ref{mtst_simple}. For example, Theorem \ref{mtst_simple} deduces the following:

\begin{thm}\label{smooth_rel_simpl}
Let $M$ be a manifold.
Set $G = \widetilde{\mathrm{Diff}_0^c}(M)$ and $N = \pi_1\left(\mathrm{Diff}_0^c(M))\right)$.
Then, $G$ is simple relative to $N$.
\end{thm}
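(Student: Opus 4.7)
The plan is to apply Theorem \ref{mtst_simple} to the universal covering projection $p \colon G = \widetilde{\mathrm{Diff}_0^c}(M) \to H = \mathrm{Diff}_0^c(M)$, whose kernel is precisely $K = N = \pi_1(\mathrm{Diff}_0^c(M))$. Two of the four hypotheses are built into this setup: the universal cover $G$ is connected, which gives (3), and the fiber of a covering map is discrete, which gives (4). Hypothesis (1), the simplicity of $\mathrm{Diff}_0^c(M)$, is the classical theorem of Mather--Thurston--Epstein, which I would invoke as a black box.

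All the real content is in hypothesis (2). I will fix a compact exhaustion $K_1 \subset K_2 \subset \cdots$ of $M$ with $K_n \subset \mathrm{int}(K_{n+1})$ and take
\[ H_n := \mathrm{Diff}_0^{K_n}(M), \]
the identity component of the Fréchet Lie group of smooth diffeomorphisms of $M$ supported in $K_n$, equipped with the $C^\infty$-topology. Each $H_n$ is then an open subgroup of a separable Fréchet Lie group, hence Polish. Their union is $\mathrm{Diff}_0^c(M) = H$, and the subspace topology that $H_n$ inherits from the usual direct-limit topology on $\mathrm{Diff}^c(M)$ agrees with its intrinsic $C^\infty$-topology.

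It remains to show that every $x \in G$ lies in some $p^{-1}(H_n)_0$. I represent $x$ by a path $\{\phi_t\}_{t \in [0,1]}$ in $\mathrm{Diff}_0^c(M)$ starting at $\id$; its image is compact in the direct-limit topology, and so, by the standard fact that compact subsets of an LF-space are contained at a single Fréchet level, it sits inside some $\mathrm{Diff}^{K_n}(M)$, and hence inside $H_n$. Lifting the path to $G$ starting at $\id_G$ then produces a continuous path in $p^{-1}(H_n)$ beginning at the identity, which therefore remains in the identity component $p^{-1}(H_n)_0$; in particular its endpoint $x$ belongs to $p^{-1}(H_n)_0$.

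The main subtlety is exactly the compactness argument in the preceding paragraph: one needs that a continuous path in $\mathrm{Diff}_0^c(M)$ is automatically uniformly supported in a single compact set. This is built into the direct-limit topology but would be a genuine obstacle under any coarser topology. Once it is in hand, Theorem \ref{mtst_simple} delivers the conclusion.
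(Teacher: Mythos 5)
Your overall strategy is the paper's: apply Theorem \ref{mtst_simple} to the projection $p\colon \widetilde{\mathrm{Diff}_0^c}(M)\to \mathrm{Diff}_0^c(M)$, quote Mather--Thurston for hypothesis (1), and verify hypothesis (2) with a compact exhaustion $(K_n)$, taking for $H_n$ the diffeomorphisms supported in $K_n$; the paper uses $H_n=\mathrm{Diff}_{K_n}(M)\cap\mathrm{Diff}_0^c(M)$ and obtains Polishness from Hirsch's theorem that $C^\infty(M,M)$ is Polish together with Lemma \ref{lemma Polish facts}, which is the same point as your ``identity component of a separable Fr\'echet Lie group'' variant.

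The one step that does not stand as written is your reliance on ``the usual direct-limit topology'' on $\mathrm{Diff}^c(M)$. Theorem \ref{mtst_simple} is a statement about topological groups and a continuous surjective homomorphism, and the colimit topology in the category of topological spaces on the ascending union of the Fr\'echet groups $\mathrm{Diff}_{K_n}(M)$ need not be a group topology at all (already for the test-function space $C_c^\infty(M)$ the analogous colimit topology fails to make addition continuous, and for noncompact $M$ the colimit topology on $\mathrm{Diff}^c(M)$ is strictly finer than the standard one); with that choice the hypotheses of Theorem \ref{mtst_simple} are simply not available, and the covering-space facts you use for (3) and (4) (which require local contractibility of $H$) would also have to be re-established in that topology. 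The paper avoids this in two ways: it works with the Whitney $C^\infty$-topology, under which each $\mathrm{Diff}_{K_n}(M)$ is still Polish and under which continuous paths in $\mathrm{Diff}^c(M)$ automatically have uniformly compact support, so your ``single level'' step survives; and, more decisively, by the definition of $\widetilde{\mathrm{Diff}_0^c}(M)$ in Subsection \ref{def of univ cov} an element of $G$ is a class of a smooth isotopy that is \emph{required} to be supported in one compact set, so no LF-compactness argument is needed at all. (Your remark that uniform support ``would be a genuine obstacle under any coarser topology'' is also inaccurate: the Whitney topology is coarser than the direct-limit topology and still forces it; only the weak compact-open $C^\infty$ topology loses it.) Finally, the paper explicitly records the local smooth contractibility of $\mathrm{Diff}_0^c(M)$ so that $p$ is genuinely a covering map and $G$ is connected; you assume this silently, and it should be stated, since for the groups treated elsewhere in the paper (Remark \ref{rem_kislev}) exactly this point can fail. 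With the topology replaced by the Whitney topology (or with the appeal to the definition of smooth isotopy), your argument matches the paper's proof.
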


However, in the case of the groups of Hamiltonian diffeomorphisms of symplectic manifolds, it is a difficult problem to determine whether the transformation group satisfies the assumptions in Theorem \ref{mtst_simple}. In fact, it is a quite subtle problem whether the natural projection $\tHam(M,\omega) \to \Ham(M, \omega)$ is a covering space in the sense of algebraic topology \cite{Hatcher} or not (see Remarks \ref{rem_ono}, \ref{rem_kislev}).
However, we can also show the following:

\begin{thm}\label{closed_rel_simpl}
Let $(M,\omega)$ be a closed symplectic manifold. Set $G = \tHam(M,\omega)$ and $N = \pi_1\left(\Ham(M,\omega)\right)$. Then, $G$ is simple relative to $N$.
\end{thm}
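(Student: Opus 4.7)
The plan is to apply Theorem \ref{mtst_simple} to the natural projection $p \colon G = \tHam(M,\omega) \to H = \Ham(M,\omega)$. Since $M$ is closed, the $C^\infty$-topology makes $\Diff(M)$ into a Polish group, and $\Ham(M,\omega)$ is a closed subgroup, so $H$ is itself Polish. Hypothesis $(1)$ of Theorem \ref{mtst_simple}, the simplicity of $\Ham(M,\omega)$, is Banyaga's classical theorem. Because $H$ is already Polish, I would take the trivial filtration $H_n = H$ for all $n$. Hypothesis $(3)$, the connectedness of $G$, is immediate from the construction of $\tHam(M,\omega)$ as homotopy classes of smooth paths of Hamiltonian diffeomorphisms based at the identity; consequently $p^{-1}(H)_0 = G$ and hypothesis $(2)$ reduces to a triviality.

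The delicate ingredient is hypothesis $(4)$, the discreteness of $K = \pi_1(\Ham(M,\omega))$ in $G$. As Remarks \ref{rem_ono} and \ref{rem_kislev} warn, it is not known whether $p$ is a topological covering map in the sense of \cite{Hatcher}. I would equip $G$ with the group topology whose identity neighborhood basis consists of the natural lifts through constant paths of path-connected open neighborhoods $U \subset H$ of $e$ with the property that every loop in $U$ is nullhomotopic in $H$. Granting the existence of such $U$, the preimage $p^{-1}(U)$ decomposes as a disjoint union of sheets indexed by $K$, which yields the required discreteness while keeping $p$ continuous and $G$ connected.

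The hardest step is producing such a neighborhood basis, that is, establishing semi-local simple connectedness of $\Ham(M,\omega)$ at the identity in the $C^\infty$-topology. I would attempt this by combining the Fréchet-manifold structure on $\Ham(M,\omega)$ with a fragmentation argument: any sufficiently $C^\infty$-small loop in $\Ham(M,\omega)$ decomposes as a product of loops supported in Darboux balls, and over each such ball the problem reduces to the compactly-supported open case, which is already accessible through Theorem \ref{smooth_rel_simpl}.

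If this topological step proves recalcitrant, the fall-back plan is to bypass Theorem \ref{mtst_simple} and run its algebraic content by hand. Given $\tilde{\phi} \in G \setminus N$, Banyaga's simplicity of $H$ immediately yields $G = N \cdot \llangle \tilde{\phi} \rrangle$. To upgrade this to $\llangle \tilde{\phi} \rrangle = G$ one would exploit centrality of $N$ in $G$---valid because $N = \pi_1(\Ham(M,\omega))$ acts by deck transformations on $G$---combined with an explicit commutator-lift construction to show that every $\alpha \in N$ arises as a product of conjugates of $\tilde{\phi}^{\pm 1}$. Either route converges on the same obstacle: controlling $N$ inside $\llangle \tilde{\phi} \rrangle$ in the absence of a genuine covering-space structure.
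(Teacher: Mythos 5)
Your route through Theorem \ref{mtst_simple} is not the paper's route, and as written it has a genuine gap precisely at the point you flag as ``delicate''. The facts you need about $H=\Ham(M,\omega)$ in the $C^\infty$-topology --- that it is a closed (hence Polish) subgroup of $\mathrm{Diff}(M)$, and that it is locally path-connected and semi-locally simply connected at the identity so that $K=\pi_1(\Ham(M,\omega))$ becomes discrete and $p$ behaves like a covering --- are not routine consequences of a Fr\'echet-manifold structure: they are essentially equivalent to the flux conjecture, i.e.\ the discreteness of the flux group, proved by Ono \cite{On06} by Floer-theoretic methods. The paper says exactly this in Remark \ref{rem_ono}: the theorem \emph{can} be obtained from Theorem \ref{mtst_simple}, but only at the cost of invoking Ono's theorem. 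Your proposed substitute for that input does not work: Theorem \ref{smooth_rel_simpl} is a statement about relative simplicity of $\widetilde{\mathrm{Diff}_0^c}(M)$ and says nothing about nullhomotopy of loops of Hamiltonian diffeomorphisms; fragmentation of a $C^\infty$-small \emph{loop} produces factors supported in Darboux balls that are isotopies, not loops, so the reduction is not immediate; and even granting such a reduction, the (simple) connectedness of $\Ham_c(\mathbb{B}^{2n},\omega_0)$ is unknown in general, so ``the compactly-supported open case'' is not ``already accessible''. Your fall-back plan correctly gets $G=N\cdot\langle\langle\tilde\phi\rangle\rangle$ from Banyaga's simplicity and correctly notes that $N$ is central, but, as you concede, it leaves open the decisive step of placing $N$ inside $\langle\langle\tilde\phi\rangle\rangle$; so neither branch of the proposal closes.

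For comparison, the paper's proof of Theorem \ref{closed_rel_simpl} bypasses all covering-space topology and works directly in $\tHam(M,\omega)$: given $g\notin\pi_1(\Ham(M,\omega))$, its image displaces some Darboux ball $U$; Banyaga's fragmentation lemma lifted to the universal covering (Lemma \ref{frag_lemma}) writes any $f$ as a product of conjugates of elements of $p_{U,M}(\Ker(\Cal_U))$; perfectness of $\Ker(\Cal_B)$ for a ball (Theorem \ref{symp_perfect}, via Lemma \ref{univ_ker_cal_lem}) turns each factor into a product of commutators supported in $U$; and the displacement--commutator trick (Lemma \ref{disp_lem}) expresses each such commutator as a product of conjugates of $g^{\pm1}$. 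That argument is soft (no Floer theory, no Polish-group machinery) and is the reason Theorem \ref{mtst_simple} is reserved in the paper for the groups $\widetilde{\mathrm{Diff}_0^c}(M)$ and $\Ker(\Flux_\Omega)$, where the required local topological properties are classical. If you want to salvage your approach, you must either cite Ono's theorem for the topological input or supply a genuinely new proof of it; the fragmentation sketch you give is not one.
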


We have a result similar to Theorem \ref{closed_rel_simpl} in the case of open symplectic manifolds. The definition of the Calabi homomorphism $\Cal_M \colon \tHam(M,\omega) \to \RR$ will be provided in Subsection \ref{symp basis}.

\begin{thm}\label{open_rel_simpl}
Let $(M,\omega)$ be an open symplectic manifold.
Let $G=\Ker(\Cal_M)$ and $N=\pi_1\left(\Ham(M,\omega)\right) \cap G$.
Then, $G$ is simple relative to $N$.
\end{thm}

We will explain what is $G/N$ in the setting of Theorem \ref{open_rel_simpl} (see Theorem \ref{banyaga_open} and Proposition \ref{open_rel}).

Note that it is easy to construct an example where $G$ is not relatively simple to $K$ if any of (1), (3), or (4) of Theorem \ref{mtst_simple} fails to hold. However, the authors are not aware of any example where $G$ is not relatively simple to $K$ when only (2) fails to hold. Therefore, we propose the following problem.

\begin{prob}
Let $p \colon G \to H$ be a continuous surjective homomorphism between topological groups, and $K$ the kernel of $p$. Assume the conditions (1), (3) and (4) in Theorem \ref{mtst_simple}. Then is it true that $G$ is simple relative to $K$?
\end{prob}

\cite{FY22} implicitly proved the following theorem (use Example 3.1[II], Lemma 3.2(1) and Proposition 4.1 of \cite{FY22}).


\begin{thm}\label{FY_rel_simple}
Let $\pi \colon M\to S^1$ be a $C^\infty$ fiber bundle whose fiber $N$ is a closed manifold and $G$ be the identity component of the group of $C^\infty$ bundle diffeomorphisms of $\pi$.
Let $P\colon G \to \mathrm{Diff}_0(S^1)$ denote the natural homomorphism from $G$ to $\mathrm{Diff}_0(S^1)$ induced by the bundle structure $\pi$ and set $N= \mathrm{Ker}(P)$.
Then $G$ is simple relative to $N$.
\end{thm}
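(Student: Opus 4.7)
The plan is to verify condition (2) of Definition~\ref{definition relatively simple}, namely that every $g \in G \setminus N$ is a normal generator of $G$. As a preliminary, $P$ is surjective onto $\mathrm{Diff}_0(S^1)$: given $\phi \in \mathrm{Diff}_0(S^1)$, choose a smooth isotopy $\{\phi_t\}_{t \in [0,1]}$ from $\mathrm{id}$ to $\phi$, lift its generating time-dependent vector field on $S^1$ horizontally with respect to an Ehresmann connection on $\pi$, and integrate to obtain a bundle isotopy lying in $G$ and projecting to $\{\phi_t\}$. We thus have the short exact sequence
\[
1 \longrightarrow N \longrightarrow G \xrightarrow{\ P\ } \mathrm{Diff}_0(S^1) \longrightarrow 1.
\]

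Fix $g \in G \setminus N$, so that $P(g) \neq \mathrm{id}$. By Herman's classical theorem, $\mathrm{Diff}_0(S^1)$ is simple, hence $P(g)$ normally generates $\mathrm{Diff}_0(S^1)$; consequently $P(\llangle g \rrangle) = \mathrm{Diff}_0(S^1)$ and $\llangle g \rrangle \cdot N = G$. The theorem therefore reduces to proving the inclusion $N \subseteq \llangle g \rrangle$.

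To establish this inclusion, pick a proper open arc $I_0 \subsetneq S^1$ with $P(g)(I_0) \cap I_0 = \emptyset$, which exists because $P(g) \neq \mathrm{id}$. A fragmentation lemma expresses every element of $N$ as a finite product of elements of $N$ supported over proper arcs of $S^1$; combined with the surjectivity of $P$ (used to translate supports by lifting suitable diffeomorphisms of $S^1$) and the normality of $\llangle g \rrangle$ in $G$, this reduces the task to showing that every $h \in N$ supported over $I_0$ lies in $\llangle g \rrangle$. For such $h$, one produces a displacing element $f \in \llangle g \rrangle$ (a suitable conjugate of $g$) whose projection $P(f)$ has pairwise disjoint forward iterates $\{P(f)^n(I_0)\}_{n \geq 0}$, defines $k \in N$ supported on $\bigsqcup_{n \geq 0} P(f)^n(I_0)$ by $k|_{P(f)^n(I_0)} = f^n h f^{-n}|_{P(f)^n(I_0)}$, and verifies fiber-by-fiber the telescoping identity $[k, f] = h$. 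Since $[k, f] \in \llangle f \rrangle \subseteq \llangle g \rrangle$, this yields $h \in \llangle g \rrangle$.

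The main technical obstacle is the commutator construction just sketched: on the compact base $S^1$ one cannot generally arrange an infinite pairwise-disjoint forward orbit of $I_0$, so the choice of $f$ and the $C^\infty$-smoothness of $k$ near the accumulation locus must be handled with care (for instance by working inside $S^1 \setminus \{\theta_0\} \cong \RR$ around a suitable fixed point $\theta_0$ of $P(f)$). The fragmentation step also warrants attention, since $N$ need not be path-connected: the homotopy long exact sequence of the fibration $N \hookrightarrow G \to \mathrm{Diff}_0(S^1)$ ties $\pi_0(N)$ to $\pi_1(\mathrm{Diff}_0(S^1)) \cong \ZZ$. The abstract framework of \cite{FY22} (Example~3.1[II], Lemma~3.2(1), and Proposition~4.1) packages precisely these technicalities into a general criterion that applies to the present setting.
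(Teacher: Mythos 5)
Your reduction is sound as far as it goes: $P$ is surjective by horizontal lifting, so simplicity of $\mathrm{Diff}_0(S^1)$ gives $\llangle g\rrangle\cdot\Ker(P)=G$ for every $g\in G\setminus\Ker(P)$, and the theorem becomes the inclusion $\Ker(P)\subseteq\llangle g\rrangle$. Note, however, that the paper itself offers no proof of this inclusion: it states that the theorem is implicit in \cite{FY22} (Example 3.1[II], Lemma 3.2(1), Proposition 4.1). So the technical content you are expected to supply is exactly what your final sentence hands back to that reference, and the mechanism you sketch in its place does not work. The element $k$ defined by $k|_{P(f)^n(I_0)}=f^nhf^{-n}$ is in general not a $C^\infty$ bundle diffeomorphism: the fiberwise parts of $f^nhf^{-n}$ are conjugates of the fixed fiberwise maps of $h$, so they do not become small, while the arcs carrying them shrink to the accumulation point $\theta_0$; consequently $k$ has no reason to extend even continuously over the fiber at $\theta_0$, and certainly not smoothly (base-direction derivatives blow up). Working in $S^1\setminus\{\theta_0\}\cong\RR$ does not help, because $k$ must be a globally defined element of $G$. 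This failure of the infinite swindle in the smooth category is precisely why simplicity and perfectness results for $C^\infty$ diffeomorphism groups require Herman--Mather--Thurston-type input rather than a telescoping identity. A secondary slip: you take the displacing $f$ to be a conjugate of $g$ with $\{P(f)^n(I_0)\}_{n\ge0}$ pairwise disjoint, but if $P(g)$ has irrational rotation number then, by Denjoy's theorem, no conjugate of $P(g)$ admits a wandering arc at all, so such an $f$ must instead be produced inside $\llangle g\rrangle$ using $P(\llangle g\rrangle)=\mathrm{Diff}_0(S^1)$.

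The workable route, parallel to how the paper proves Theorems \ref{closed_rel_simpl} and \ref{open_rel_simpl}, is: fragment an element of $\Ker(P)$ into pieces supported over proper arcs (this itself needs care, as you note, because of $\pi_0$/monodromy issues for $\Ker(P)$); invoke perfectness of the group of fiber-preserving diffeomorphisms supported over an arc --- this is the genuinely nontrivial input, and it is what \cite{FY22} (and Rybicki-type results on leaf- or fiber-preserving diffeomorphism groups) actually supplies; and then use the displacement only once, via the identity of Lemma \ref{disp_lem}, which writes a commutator of arc-supported elements as a product of four conjugates of $g^{\pm1}$. Since your argument's crucial step is exactly the part deferred to ``care'' and to \cite{FY22}, the proposal as written has a genuine gap.
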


We end this subsection by providing remarks on previous work. 
The uniform version of the notion of relative simplicity is already considered by Fukui and Yagasaki.

\begin{definition}[\cite{FY22}]\label{FY_def}
Let $G$ be a group and $N$ a normal subgroup of $G$.
The group $G$ is called \textit{uniformly simple relative to} $N$ if, there exists $k \in \ZZ_{>0}$ such that, for every $f \in G$ and $g \in G \setminus N$, $f$ is represented as a product of $k$ or fewer number of conjugates of $g$ or $g^{-1}$.
\end{definition}

Fukui and Yagasaki proved the uniform relative simplicity of $G$ in Theorem \ref{FY_rel_simple} under some condition strogner than Theorem \ref{FY_rel_simple} (see Corollary 1.1 of \cite{FY22}).

\subsection{Some applications of relative simplicity}\label{subsec:some appli}

In this subsection, we provide some applications of Theorems \ref{closed_rel_simpl}, \ref{open_rel_simpl}.
The applications in this subsection may be folklore because their proofs are similar to the original ones.
However, the notion of relative simplicity may be useful for understanding the proof of these results clearly.

\begin{thm}\label{auto_thm}
Let $(M,\omega)$ be a symplectic manifold.
Every $f \in \tHam(M,\omega)$ can be represented as a product of autonomous elements of $\tHam(M,\omega)$.
\end{thm}

 For the definition of autonomous element, see Section \ref{well-def_aut}.
For $f \in \tHam(M, \omega)$, define $\| f\|_{aut}$ to be the smallest number $k$ such that there are autonomous $k$ elements whose product is $f$. We call this function $\|\cdot\|_{aut}\colon \tHam(M,\omega) \to \ZZ_{\geq0}$ the \emph{autonomous norm of $(M, \omega)$}.

Theorem \ref{auto_thm} means that the autonomous norm is well-defined on $\tHam(M,\omega)$.
We have results similar to Theorem \ref{auto_thm} on other transformation groups (Theorems \ref{smooth_auto}, \ref{vol_auto}).

The autonomous norm on $\Ham(M,\omega)$ is defined similarly.
Brandenbursky, K\k{e}dra and Shelukhin studied the autonomous norms on $\Ham(M,\omega)$ in \cite{BK13}, \cite{BKS} when the dimension of $M$ is two.

The following results are analogues of results in \cite{EP93} (for a historical background, see Subsection \ref{sec:Hofer_metric}).

\begin{thm}\label{closed_lphofer}
Let $(M,\omega)$ be a closed symplectic manifold.
For every integer $1 \leq p < \infty$, the $L^p$-Hofer metric $($see Section \ref{sec:Hofer_metric} for the definition$)$ on $\tHam(M,\omega)$ vanishes i.e., for every $f,g \in \tHam(M,\omega)$, $\tilde{\rho}_p(f,g) = 0$.
\end{thm}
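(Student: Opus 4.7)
The plan is to introduce the vanishing locus
\[
\mathcal{N}_p := \bigl\{ f \in \tHam(M,\omega) : \tilde{\rho}_p(\mathrm{id}, f) = 0 \bigr\}
\]
and to prove $\mathcal{N}_p = \tHam(M,\omega)$ via Theorem \ref{closed_rel_simpl}.

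\textbf{Step 1 (formal properties).} Concatenation, inversion, and conjugation of Hamiltonian isotopies preserve homotopy classes in $\tHam(M,\omega)$ and satisfy the expected estimates on $L^p$-Hofer length: the length of a concatenation is bounded by the sum, a time-reversed path has the same length, and conjugating by a fixed element of $\tHam(M,\omega)$ leaves the length unchanged. Hence the pseudo-norm $f \mapsto \tilde{\rho}_p(\mathrm{id}, f)$ is sub-additive, symmetric, and conjugation-invariant, so $\mathcal{N}_p$ is a normal subgroup of $\tHam(M,\omega)$. Bi-invariance of $\tilde{\rho}_p$ further reduces the theorem to showing $\mathcal{N}_p = \tHam(M,\omega)$.

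\textbf{Step 2 (an element of $\mathcal{N}_p$ outside $\pi_1(\Ham(M,\omega))$).} Fix an open ball $U \subset M$ displaceable by some $\psi \in \Ham(M,\omega)$, and a non-trivial $\phi \in \Ham(M,\omega)$ generated by a Hamiltonian supported in $U$. The classical Eliashberg--Polterovich displacement argument \cite{EP93} produces, for every $\varepsilon > 0$, a Hamiltonian path from $\mathrm{id}$ to $\phi$ whose $L^p$-Hofer length is at most $\varepsilon$. I would carry out this construction in a continuously parametrized fashion (via a fragmentation parameter and conjugations by $\psi$ varying smoothly with $\varepsilon$), so that the resulting family $\{\gamma_\varepsilon\}_{\varepsilon \in (0,\varepsilon_0]}$ is a continuous family of paths with fixed endpoints $\mathrm{id}$ and $\phi$. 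Any continuous family of paths with common endpoints represents a single homotopy class rel endpoints, hence lifts to a common element $\tilde{\phi} \in \tHam(M,\omega)$. Consequently $\tilde{\rho}_p(\mathrm{id}, \tilde{\phi}) \le \varepsilon$ for every $\varepsilon > 0$, which gives $\tilde{\phi} \in \mathcal{N}_p$; and since the projection of $\tilde{\phi}$ to $\Ham(M,\omega)$ is the non-trivial $\phi$, one has $\tilde{\phi} \notin \pi_1(\Ham(M,\omega))$.

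\textbf{Step 3 (relative simplicity).} By Theorem \ref{closed_rel_simpl}, $\pi_1(\Ham(M,\omega))$ is the maximum proper normal subgroup of $\tHam(M,\omega)$, so every normal subgroup not contained in $\pi_1(\Ham(M,\omega))$ must coincide with $\tHam(M,\omega)$. The normal subgroup $\mathcal{N}_p$ contains $\tilde{\phi} \notin \pi_1(\Ham(M,\omega))$, so $\mathcal{N}_p = \tHam(M,\omega)$, and bi-invariance gives $\tilde{\rho}_p(f,g) = \tilde{\rho}_p(\mathrm{id}, f^{-1}g) = 0$ for all $f, g \in \tHam(M,\omega)$.

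The main obstacle is Step 2: one must ensure that the Eliashberg--Polterovich construction can be arranged to produce a \emph{continuous} (or at least single-homotopy-class) family of length-shrinking paths, rather than merely a sequence of paths whose lifts to $\tHam(M,\omega)$ might drift through different elements of $\pi_1(\Ham(M,\omega))$. This is the analytic input coming from a careful rereading of \cite{EP93}; once it is in place, the algebraic passage from one vanishing element to the whole universal cover is an immediate application of relative simplicity.
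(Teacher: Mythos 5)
Your Steps 1 and 3 are exactly the paper's argument: the vanishing locus $\mathcal{N}_p=N_{\tilde{\nu}_p}$ is a normal subgroup because $\tilde{\nu}_p$ is a conjugation-invariant pseudo-norm, and once it contains one element outside $\pi_1(\Ham(M,\omega))$, Theorem \ref{closed_rel_simpl} forces $\mathcal{N}_p=\tHam(M,\omega)$. The issue is Step 2, which you yourself flag as unresolved, and as written it does contain a genuine gap: you first invoke the Eliashberg--Polterovich argument to get, for each $\varepsilon>0$, a short path from $\Id$ to a fixed $\phi\in\Ham(M,\omega)$, and then you need all these paths to lie in a single homotopy class rel endpoints. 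Your proposed fix --- arranging the construction to depend continuously on $\varepsilon$ --- is not something the Eliashberg--Polterovich construction obviously provides: as $\varepsilon\to 0$ the construction changes its combinatorial shape (the number of fragments and of conjugating displacements grows), so there is no natural continuous interpolation between the paths for different $\varepsilon$, and nothing in your sketch supplies one. Moreover, producing arbitrarily short paths for an \emph{arbitrary} nontrivial $\phi$ supported in a ball already uses the vanishing of $\rho_p$ on all of $\Ham(M,\omega)$, i.e.\ the downstairs version of the very theorem being proved, which makes the logic uncomfortably circular unless you restrict to the specific elements EP actually treat.

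The paper's route (its Lemma \ref{EP_lemma}) avoids the homotopy-control problem entirely, and this is the repair you should make: run the EP argument directly in $\tHam(M,\omega)$ rather than downstairs. Fix $f,g\in\tHam_U(M,\omega)$ with $[f,g]\notin\pi_1(\Ham(M,\omega))$, where $U$ is a ball that can be displaced by elements $h_\varepsilon\in\tHam(M,\omega)$ of arbitrarily small $\tilde{\nu}_p$-norm (this is the genuine analytic input from \cite{EP93}: thin displacements have small $L^p$-cost). By Lemma \ref{disp_lem}, the identity
\[
[f,g]=\bigl((fg)h_\varepsilon (fg)^{-1}\bigr)\bigl(f h_\varepsilon^{-1} f^{-1}\bigr)\,h_\varepsilon\,\bigl(g h_\varepsilon^{-1} g^{-1}\bigr)
\]
holds as an exact group identity in $\tHam(M,\omega)$ for each $\varepsilon$ separately, so $\tilde{\nu}_p([f,g])\le 4\,\tilde{\nu}_p(h_\varepsilon)\le 4\varepsilon$. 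The element whose norm is being bounded is the \emph{same} fixed element $[f,g]$ of the universal cover for every $\varepsilon$, and the various $h_\varepsilon$ never need to be compared or homotoped to one another. Letting $\varepsilon\to 0$ gives $[f,g]\in\mathcal{N}_p\setminus\pi_1(\Ham(M,\omega))$, which is exactly the input your Step 3 requires.
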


\begin{thm}\label{open_lphofer}
Let $(M,\omega)$ be an open symplectic manifold.
For every integer $1 \leq p < \infty$ and for all $f \in \tHam(M,\omega)$,
\[ \tilde{\rho}_p(\Id, f) = (\mathrm{Volume}(M))^{(1-p)/p} \cdot \lvert \Cal(f) \rvert \]
holds.
\end{thm}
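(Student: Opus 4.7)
The plan is to prove the two inequalities separately: the lower bound by Hölder's inequality, and the upper bound by combining an explicit Hölder-equality model with the vanishing of $\tilde{\rho}_p$ on $\Ker(\Cal_M)$, which will in turn come from Theorem \ref{open_rel_simpl}.

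For the lower bound, given any compactly supported Hamiltonian $H_t$ generating a representative of $f$, applying Hölder's inequality in the spatial variable at each time slice gives
\[
|\Cal(f)| \leq \int_0^1 \int_M |H_t|\,\omega^n/n!\, dt \leq (\mathrm{Volume}(M))^{(p-1)/p} \int_0^1 \|H_t\|_p \, dt.
\]
Taking the infimum over generating Hamiltonians yields $\tilde{\rho}_p(\Id, f) \geq (\mathrm{Volume}(M))^{(1-p)/p} |\Cal(f)|$.

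For the upper bound, I first show that $\tilde{\rho}_p$ vanishes identically on $\Ker(\Cal_M)$. The set $K := \{f \in \tHam(M,\omega) : \tilde{\rho}_p(\Id, f) = 0\}$ is a normal subgroup of $\tHam(M,\omega)$ by conjugation invariance and the triangle inequality for $\tilde{\rho}_p$, so $K \cap \Ker(\Cal_M)$ is normal in $\Ker(\Cal_M)$. By Theorem \ref{open_rel_simpl}, this intersection is either all of $\Ker(\Cal_M)$ or contained in $N := \pi_1(\Ham(M,\omega)) \cap \Ker(\Cal_M)$. To rule out the latter, I exhibit one $\phi \in \Ker(\Cal_M) \setminus N$ with $\tilde{\rho}_p(\Id, \phi) = 0$: pick a displaceable open ball $B \subset M$ and a compactly supported time-independent $h \in C^\infty_c(B)$ whose Liouville integral is zero and whose flow has non-trivial time-one map; then $\phi \notin N$ because its time-one map is not the identity, while the displacement–conjugation trick of Eliashberg and Polterovich (fragmenting the generating Hamiltonian into many short-time pieces carried into pairwise disjoint displaced copies of $B$) produces generating Hamiltonians of $\phi$ whose $L^p$-norms tend to zero, giving $\tilde{\rho}_p(\Id, \phi) = 0$.

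Next, for any $c \in \RR$ and $\varepsilon > 0$, I construct a model $g_{c,\varepsilon} \in \tHam(M,\omega)$ with $\Cal(g_{c,\varepsilon}) = c$ and $\tilde{\rho}_p(\Id, g_{c,\varepsilon}) \leq (\mathrm{Volume}(M))^{(1-p)/p} |c| + \varepsilon$ by taking a time-independent compactly supported Hamiltonian approximately equal to $c/\mathrm{Volume}(M)$ on a large compact exhaustion of $M$ and smoothly interpolating to zero outside; as the exhaustion sweeps out $M$, the $L^p$-norm saturates the Hölder estimate up to boundary error. Combining the two ingredients: for any $f \in \tHam(M,\omega)$, the element $f g_{\Cal(f),\varepsilon}^{-1}$ lies in $\Ker(\Cal_M)$ and hence has vanishing $\tilde{\rho}_p$ by the previous step, so the triangle inequality yields
\[
\tilde{\rho}_p(\Id, f) \leq \tilde{\rho}_p(\Id, g_{\Cal(f),\varepsilon}) \leq (\mathrm{Volume}(M))^{(1-p)/p} |\Cal(f)| + \varepsilon,
\]
and sending $\varepsilon \to 0$ finishes the argument. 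The main obstacle is the displacement–conjugation construction producing $\phi$: carrying it out carefully while ensuring the lift to $\tHam(M,\omega)$ is not in $\pi_1(\Ham(M,\omega))$ is the technical core, whereas the remaining steps are either routine $L^p$-estimates or direct applications of Theorem \ref{open_rel_simpl}.
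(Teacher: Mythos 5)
Your proposal is correct in substance, but it assembles the result along a different route than the paper. Both arguments share the same two essential inputs: the relative simplicity of $\Ker(\Cal_M)$ (Theorem \ref{open_rel_simpl}), and the existence of an element of $\Ker(\Cal_M)\setminus\pi_1(\Ham(M,\omega))$ with vanishing $\tilde{\nu}_p$, obtained by the Eliashberg--Polterovich displacement/fragmentation trick --- this is exactly the paper's Lemma \ref{EP_lemma}, whose proof the paper also delegates to \cite{EP93}, so you and the paper are on equal footing on the ``technical core'' you flag. Where you diverge is the final assembly: the paper packages the vanishing of $\tilde{\nu}_p$ on $\Ker(\Cal_M)$ into the abstract notion of a group Calabian relative to $N$ (Example \ref{example:Relative_Calabian}) and then invokes the intrinsic-pseudo-metric machinery of \cite[Lemmas 3.3.B--3.3.E]{EP93} to get the proportionality $\tilde{\rho}_p(\Id,f)=\lambda_p\lvert\Cal(f)\rvert$ (Theorem \ref{thm:EP_Theorem_3.3.A}), identifying $\lambda_p$ afterwards as in \cite[Corollary 1.4.B]{EP93}; you instead prove the two inequalities directly, the lower bound by H\"older applied to any generating Hamiltonian and the upper bound by an explicit near-constant model $g_{c,\varepsilon}$ plus bi-invariance and the triangle inequality. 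Your route is more elementary and self-contained for this particular pseudo-metric, since it never needs the intrinsicness hypothesis or condition (C3); the paper's route buys a statement valid for every continuous bi-invariant intrinsic pseudo-metric at the cost of citing more of \cite{EP93}. Two small points to repair in your write-up: your H\"older display uses $\omega^n/n!$ while the paper's $\Cal_M$ and $\lVert\cdot\rVert_{L^p}$ are both normalized with $\omega^n$, so the $n!$ must be dropped (or inserted consistently everywhere); and the prescription ``$H\approx c/\mathrm{Volume}(M)$ on a compact exhaustion'' is vacuous when $\mathrm{Volume}(M)=\infty$ --- there one should take $H\approx c/\mathrm{Volume}(K_j)$ on $K_j$, which gives $\Cal\approx c$ with $\lVert H\rVert_{L^p}\to 0$ for $p>1$ and $\lVert H\rVert_{L^1}\approx\lvert c\rvert$ for $p=1$, matching the paper's conventions $\infty^{(1-p)/p}=0$ and $\infty^0=1$.
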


In Theorem \ref{open_lphofer}, we consider that $\infty^0=1$ for $p=1$ and $\infty^{(1-p)/p}=0$ for $p>1$.


\subsection{Tsuboi metric space}\label{TKI_metric_intro}
In \cite{T09}, Tsuboi constructed a metric space for a simple group $G$ (see also \cite{K11}, \cite{T17} and \cite{I18}). In the present paper, we generalize his construction for relatively simple groups in the following way.

Let $K$ be a normally generating subset of a group $G$. For $g \in G$, let $q_K(g)$ be the smallest number $n$ such that $g$ is a product of conjugates of elements of $K \cup K^{-1}$. Here we write $K^{-1}$ to indicate $\{ g^{-1} \mid g \in K\}$. This function $q_K(g)$ has appeared in the literature in conjugation invariant norms (see Definition \ref{cin}). When $K$ consists of a single element $x$, then we write $q_{x}(g)$ instead of $q_{\{ x\}}(g)$.

Let $G$ be a relatively simple group whose maximum normal subgroup is $N$. Elements $f$ and $g$ of $G$ are \emph{symmetrized conjugate} to each other if $f$ is conjugate to $g$ or $g^{-1}$, and let us denote $f \sim g$.
It is not difficult to confirm that symmetrized conjugacy is an equivalence relation. 
Let $[g]$ denote the symmetrized conjugacy class represented by $g \in G$. 
We set $\TK(G) = (G \setminus N)/\sim$ and define a function $d\colon \TK(G) \times \TK(G) \to \RR_{\geq0}$ by
\[
d([f], [g]) = \log\max\{q_{g}(f), q_{f}(g)\}.
\]
This function $d$ is a metric on $\TK(G)$.
Indeed, the triangle inequality of $d$ follows from the inequality
\[
q_{f}(h) \leq q_{f}(g)q_{g}(h),
\]
which we can confirm easily. 
In present paper, we call this $d$ the \textit{Tsuboi metric of $G$} and the pair $(\TK(G),d)$ the \textit{Tsuboi metric space of $G$}.
We note that $d$ is bounded if and only if $G$ is uniformly simple relative to its maximum normal subgroup.

Tsuboi considered the above metric space $(\TK(G),d)$ for a simple group $G$ \cite{T09}, and the above construction is a straightforward generalization to the case of relatively simple groups.

The Tsuboi metric spaces of simple groups have been studied with a focus on their large scale geometric structure. For example, 
the third author
\cite{K11} determined the quasi-isometric structure of the Tsuboi metric space of $A_\infty$.

\begin{theorem}[{\cite{K11}}] \label{theorem A_infinity_intro}
Let $A_\infty$ be the infinite alternating group $\bigcup_{n=1}^{\infty}A_n$.
The Tsuboi metric space $(\TK(A_\infty), d)$ is quasi-isometric to the half line $\RR_{\ge 0}$ with the standard metric.
\end{theorem}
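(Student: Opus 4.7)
The plan is to construct an explicit quasi-isometry $\phi \colon \TK(A_\infty) \to \RR_{\ge 0}$ by setting $\phi([g]) = \log s_g$, where $s_g = |\supp(g)|$ denotes the number of non-fixed points of $g \in A_\infty$. Since the cycle type of a permutation is invariant under conjugation and inversion, $s_g$ depends only on the symmetrized conjugacy class, so $\phi$ is well defined. For each integer $n \ge 3$ there is an element of $A_\infty$ with $s_g = n$ (an $n$-cycle if $n$ is odd, a disjoint product of a $3$-cycle and an $(n-3)$-cycle if $n \ge 4$ is even), so the image of $\phi$ contains $\{\log n : n \ge 3\}$, which is coarsely dense in $\RR_{\ge 0}$.

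The lower bound $d([f],[g]) \ge |\phi([f]) - \phi([g])|$ is elementary. The support of a product is subadditive, so a product of $k$ conjugates of $g^{\pm 1}$ has support of size at most $k s_g$. Hence $q_g(f) \ge s_f/s_g$ and symmetrically $q_f(g) \ge s_g/s_f$, giving
\[
d([f],[g]) = \log\max\{q_g(f), q_f(g)\} \ge |\log s_f - \log s_g|.
\]

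The matching upper bound is the main technical claim: there is an absolute constant $C$ such that for all non-trivial $f, g \in A_\infty$,
\[
q_g(f) \le C \Bigl( \frac{s_f}{s_g} + 1 \Bigr).
\]
I would prove this in two steps. Step 1 (base case): for every non-trivial $g \in A_\infty$, any $f$ with $s_f \le s_g$ satisfies $q_g(f) \le C_0$ for some $C_0$ independent of $g$. The idea is to construct conjugates $g_1, g_2$ of $g^{\pm 1}$ whose supports largely coincide so that the product $g_1 g_2$ cancels most of the action and realizes a permutation of small support; by combining several such controlled cancellations, together with auxiliary $3$-cycles to fix parity, every element of $A_\infty$ of support at most $s_g$ is expressed as a product of a bounded number of conjugates of $g^{\pm 1}$. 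Step 2 (scaling): for $s_f > s_g$, decompose $f$ as a disjoint product $f = f_1 \cdots f_\ell$ in $A_\infty$ with each $s_{f_j} \le s_g$ and $\ell = O(s_f/s_g)$, by grouping the cycles of $f$ into blocks of total support at most $s_g$ and inserting disjoint auxiliary $3$-cycles to keep each block even. Applying Step 1 to each $f_j$ then yields the claimed bound. Combining both bounds gives $|d([f],[g]) - |\phi([f]) - \phi([g])|| = O(1)$, so $\phi$ is a quasi-isometry.

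The main obstacle will be Step 1, which asserts a uniform bounded-normal-generation property of $A_\infty$ in which the cost of realizing small-support elements does not blow up as $s_g \to \infty$. The most delicate case is when $g$ consists of one long cycle, since two long cycles generically do not multiply to an element of small support; one must carefully prescribe the relative positions of the two conjugates so that their actions invert each other outside a short interval. Once this construction is in place, the general case follows by applying a suitable power of $g$ and working cycle-by-cycle.
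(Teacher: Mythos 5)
Your overall architecture is the right one, and it is essentially the route of \cite{K11} (and of Section \ref{symmetric_group} of this paper for $S_\infty$): the lower bound $d([f],[g])\ge|\log s_f-\log s_g|$ via subadditivity of the support norm is correct and is exactly how Lemma \ref{lemma asymmetry}(2) is proved, and the whole theorem reduces to your uniform upper bound. But that upper bound (Step 1) is precisely the mathematical content of the theorem, and the mechanism you sketch for it cannot work. Your own lower bound shows that to write an $f$ with $s_f$ comparable to $s_g$ as a product of $O(1)$ conjugates of $g^{\pm1}$, the individual factors must be used at "full support"; so building blocks obtained by making two conjugates of $g^{\pm1}$ cancel ``most of the action'' and realize permutations of small support (conjugates that ``invert each other outside a short interval'') can never reach, in boundedly many factors, the elements with $s_f\le s_g$ and $s_f\to\infty$. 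What is actually needed is the opposite kind of construction: from two conjugates of a long cycle one must produce, at cost $O(1)$, a \emph{prescribed} element of support comparable to $s_g$, e.g.\ $\iota_k=(1,2)\cdots(2k-1,2k)$ with $k\asymp s_g$ (this is \cite[Lemma 5]{K11}, quoted as Lemma \ref{lemma kodama 1}), and then convert $\iota_k$ into an arbitrary even $f$ with $s_f\le s_g$ using another $O(1)$ conjugates (the analogues of Lemmas \ref{lemma three}, \ref{lemma kodama 2}, \ref{lemma key 2S}, \ref{lemma 11.10}, \ref{lemma 11.13}, including the case where $f$ has many cycles, which must be handled simultaneously rather than cycle-by-cycle for the same support reason). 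None of these constructions appear in your proposal; they are asserted, and the present paper does not reprove them either but cites \cite{K11}. So there is a genuine gap at the decisive step.

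Two smaller points on parity. In $A_\infty$ there is no parity to fix: $g^{\pm1}$, their conjugates, their products and the target $f$ are all even, so the ``auxiliary $3$-cycles to fix parity'' in Step 1 do nothing (parity is an issue only in the $S_\infty$ version, which is why Lemma \ref{lemma three} requires $l$ and $k$ of the same parity). In Step 2, ``inserting disjoint auxiliary $3$-cycles to keep each block even'' is not correct: a $3$-cycle is even, so it cannot change the parity of a block, and inserting one without a compensating inverse changes the product $f$. The fix is easy but different: since $f$ is even, the number of odd blocks is even, so either regroup the cycles so that every block is even, or glue a common transposition $t$ into two odd blocks ($B_it\cdot tB_j=B_iB_j$ for disjoint blocks), keeping each factor even with support at most $s_g+2$. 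With these repairs Step 2 is fine, but Step 1 still needs the explicit constructions above before the argument is a proof.
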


In \cite{T17}, Tsuboi asked whether $\TK(\Ker(\Cal_{D^2}))$ is quasi-isometric to the half line with the standard metric or not. Here, $\Cal\colon \Ham(D^2,\omega_0) \to \RR$ is the Calabi homomorphism on the $2$-dimensional disk $D^2$ with the standard symplectic form $\omega_0$. Ishida \cite{I18} solved Tsuboi's problem.

\begin{thm}[{\cite[Theorem1.3]{I18}}]
The Tsuboi metric space $(\TK(\Ker (\Cal_{D^2})), d)$ is not quasi-isometric to the half line $\RR_{\ge 0}$ with the standard metric.
\end{thm}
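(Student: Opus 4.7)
The plan is to exhibit essentially two-dimensional large-scale geometry inside $\TK(\Ker(\Cal_{D^2}))$ via partial Calabi quasi-morphisms, which rules out quasi-isometry with the one-dimensional half line. By Entov--Polterovich's construction of partial Calabi quasi-morphisms (referenced earlier in this introduction), to each displaceable open disk $U\subset D^2$ is associated a homogeneous quasi-morphism $\mu_U\colon \Ham(D^2,\omega_0)\to \RR$ whose restriction to Hamiltonians supported in $U$ coincides with the Calabi invariant. Pick two pairwise disjoint displaceable open disks $U_1,U_2\subset D^2$ and choose autonomous commuting elements $\phi_1,\phi_2\in\Ker(\Cal_{D^2})$ with disjoint supports, arranged so that the $2\times 2$ matrix $M=(\mu_{U_i}(\phi_j))_{i,j=1,2}$ is nonsingular---for instance, take each $\phi_i$ to be generated by a Hamiltonian concentrated on $U_i$ balanced by one on a small disjoint disk to kill its total Calabi.

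For $(a,b)\in\ZZ_{>0}^{2}$ set $f_{a,b}=\phi_1^{a}\phi_2^{b}\in\Ker(\Cal_{D^2})$. Homogeneity and commutativity give $(\mu_{U_1}(f_{a,b}),\mu_{U_2}(f_{a,b})) = M(a,b)^{\top}$. The standard inequality $|\mu(h)|\leq q_{g}(h)\,(|\mu(g)|+D(\mu))$, applied with $\mu=\mu_{U_1}$ and $\mu=\mu_{U_2}$, yields
\[
q_{f_{c,d}}(f_{a,b})\;\geq\;c_{0}\,\frac{\|M(a,b)^{\top}\|_{\infty}}{\|M(c,d)^{\top}\|_{\infty}+1}
\]
for some $c_{0}>0$, together with the symmetric bound swapping $(a,b)\leftrightarrow(c,d)$; taking $\log\max$ yields a nontrivial two-parameter lower bound on the Tsuboi distance $d([f_{a,b}],[f_{c,d}])$.

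Apply these bounds to the three sequences $x_n=[\phi_1^n]$, $y_n=[\phi_2^n]$, $z_n=[\phi_1^n\phi_2^n]$. Each of the three pairwise distances is $\geq\log n-O(1)$; combined with matching upper bounds of order $O(\log n)$, obtained via Banyaga-style fragmentation in $\Ker(\Cal_{D^2})$ together with the disjoint-support structure of $\phi_1,\phi_2$, all three distances have the same asymptotic order $\log n$. In $\RR_{\geq 0}$, any three points satisfy, under some ordering, $|p-r|=|p-q|+|q-r|$ exactly, so under a hypothetical $(K,C)$-quasi-isometric embedding $\Phi\colon\TK(\Ker(\Cal_{D^2}))\to\RR_{\geq 0}$, the pigeonhole principle provides a single ordering of the triples $(x_n,y_n,z_n)$ yielding $d(x_n,z_n)\geq d(x_n,y_n)+d(y_n,z_n)-C'$ for infinitely many $n$, with $C'$ independent of $n$. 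This contradicts the equality of asymptotic orders above, so no such $\Phi$ exists.

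The main obstacle is the upper bound step: lower bounds from quasi-morphisms are essentially formal, but forcing the fat-triangle behavior requires showing that $d(x_n,z_n)$, $d(x_n,y_n)$, and $d(y_n,z_n)$ are all $O(\log n)$, which demands explicit short decompositions---for example, of $\phi_2^n$ as a bounded product of conjugates of $\phi_1^{\pm n}$ via fragmentation, relying on $\phi_1$ and $\phi_2$ being conjugate in $\Ham(D^2,\omega_0)$ when chosen appropriately. An alternative, perhaps cleaner, route bypasses the triangle comparison altogether by showing directly that $(a,b)\mapsto[f_{a,b}]$ is a quasi-isometric embedding of a log-distorted $\ZZ_{>0}^{2}$ into $\TK(\Ker(\Cal_{D^2}))$; this implies $\TK(\Ker(\Cal_{D^2}))$ has asymptotic dimension $\geq 2$, which already prevents quasi-isometry with the half line.
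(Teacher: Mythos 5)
Your endgame is where the argument breaks. A $(K,C)$-quasi-isometric embedding only transports the additive relation $|p-r|=|p-q|+|q-r|$ on the half line up to \emph{multiplicative} loss: pulling it back gives $d(x_n,z_n)\ \ge\ \tfrac{1}{K^2}\bigl(d(x_n,y_n)+d(y_n,z_n)\bigr)-C'$, not the coefficient-one inequality you assert; and even if your inequality held, it would not contradict ``all three distances have order $\log n$'' (take $\log n,\ \log n,\ 2\log n$). In fact a triple whose pairwise distances are all comparable embeds into a line with multiplicative distortion at most $2$, so growing ``fat triangles'' are simply not an obstruction to quasi-isometry with $\RR_{\ge 0}$. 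This is precisely why Ishida's proof (and the paper's Theorem \ref{general ishida}, which follows the same strategy) is organized differently: one builds a coarse ray $f_n=\tilde{s}_r^{\,n}(f)$ with \emph{bounded} consecutive gaps and \emph{linearly} growing mutual distances (the support-volume norm, Lemma \ref{ishida lem}(1)--(2)), together with a second family $g^m$ whose distance to \emph{every} point of the ray grows like $\log m$ (one partial Calabi quasi-morphism plus the scaling identity of Lemma \ref{shrinking lem}, giving Proposition \ref{ishida ineq}(3)). On a half line the image of a bounded-gap ray is coarsely dense in an unbounded interval, so each $g^m$ would have to be boundedly close to the ray---a contradiction that survives multiplicative constants. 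Your triangle-defect argument does not.

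The symplectic input is also internally inconsistent. Entov--Polterovich do not attach to each displaceable disk $U$ a quasi-morphism $\mu_U$ with the Calabi property only on $U$: their partial Calabi quasi-morphism has the Calabi property for \emph{all} displaceable sets simultaneously, so for your $\phi_i$ (supported in a displaceable set with total Calabi zero) one gets $\mu(\phi_i)=\Cal(\phi_i)=0$ and the matrix $M$ degenerates; producing two quasi-morphisms with a nonsingular matrix requires different tools (Gambaudo--Ghys type constructions, or Calabi quasi-morphisms pulled back from spheres of different areas, as Ishida actually uses). Worse, your proposed upper-bound mechanism wants $\phi_1$ and $\phi_2$ conjugate, but homogeneous quasi-morphisms are conjugation-invariant, so conjugacy forces the two columns of $M$ to coincide ($M$ singular), and in $\TK(\Ker(\Cal_{D^2}))$ conjugate elements are identified, so $[\phi_1^n]=[\phi_2^n]$ and your triple collapses---the two halves of the proposal cannot both hold. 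Finally, the uniform upper bounds on $q_{f_{c,d}}(f_{a,b})$ are exactly the hard step and nothing supplies them: the paper's only upper bound (Lemma \ref{ishida lem}(2)) exists because $\tilde{s}_r$ is a group endomorphism, which transports a decomposition of $\tilde{s}_r(f)$ into conjugates of $f^{\pm1}$ to one of $\tilde{s}_r^{\,n+1}(f)$ into conjugates of $\tilde{s}_r^{\,n}(f)^{\pm1}$; there is no analogous self-map relating $f_{a,b}$ to $f_{2a,2b}$. Your alternative route via asymptotic dimension would indeed suffice as an obstruction, but the quasi-isometric embedding of the log-lattice is precisely what remains unproved.
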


Moreover, Ishida proved the following theorem.


\begin{thm}[\cite{I18}, Remark 3.2]\label{original ishida2}
Let $(M,\omega)$ be a one point (symplectic) blow up of a closed symplectic 4-manifold $(X,\omega_X)$ such that $\omega_X$ and the first Chern class $c_1(X)$ vanish on $\pi_2(X)$,
Then, $\left(\mathcal{M}(\Ham(M,\omega)),d\right)$ is not quasi-isometric to the half line $\RR_{\ge 0}$ with the standard metric.
\end{thm}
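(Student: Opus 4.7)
The plan is to extend Ishida's argument from \cite[proof of Theorem 1.3]{I18} for $\Ker(\Cal_{D^2})$ to the blow-up $(M,\omega)$. The overall framework is: Banyaga's theorem ensures that $\Ham(M,\omega)$ is simple, so its Tsuboi metric space $(\TK(\Ham(M,\omega)),d)$ is well-defined in the sense of Subsection \ref{TKI_metric_intro}; the objective is then to exhibit enough Hamiltonian diffeomorphisms whose mutual Tsuboi distances, together with a distance-to-basepoint function, are rich enough to rule out any quasi-isometry to $\RR_{\ge 0}$.

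The key analytic input is the existence of two linearly independent homogeneous partial Calabi quasi-morphisms $\mu_1,\mu_2\colon \Ham(M,\omega)\to\RR$. For a one-point symplectic blow-up of a closed symplectic $4$-manifold, the Entov--Polterovich spectral construction, applied to a pair of orthogonal field idempotents of the quantum cohomology $QH^*(M)$, produces such quasi-morphisms on $\tHam(M,\omega)$: one idempotent sees the main part of $M$ and the other sees a neighbourhood of the exceptional divisor $E$. The hypotheses that $\omega_X$ and $c_1(X)$ vanish on $\pi_2(X)$ are exactly what allow these quasi-morphisms to descend from $\tHam(M,\omega)$ to $\Ham(M,\omega)$, by trivializing the Seidel representation on the relevant idempotents. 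One then selects autonomous Hamiltonians $H_1,H_2$ supported in disjoint displaceable open sets $U_1,U_2\subset M$, with $U_1$ in the main part and $U_2$ near $E$, so that the time-one maps $\varphi_1,\varphi_2$ satisfy $\mu_i(\varphi_j)=\delta_{ij}$ after rescaling.

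The combinatorial heart of the argument is the standard lower bound
\[
q_g(f)\ \ge\ \frac{|\mu(f)|}{|\mu(g)|+D(\mu)}
\]
for any homogeneous quasi-morphism $\mu$ of defect $D(\mu)$, which is conjugation-invariant. Feeding this inequality with $\mu_1$ and $\mu_2$ into the families $\{\varphi_1^n\}$, $\{\varphi_2^n\}$, $\{\varphi_1^n\varphi_2^n\}$ bounds each of the three pairwise Tsuboi distances from below by $\log n - C$, while also producing two asymptotically independent directions in $\TK(\Ham(M,\omega))$. Following Ishida, one deduces that the map $[\varphi_1^m\varphi_2^n]\mapsto(|m|,|n|)$ forces the asymptotic shape of $\TK(\Ham(M,\omega))$ to dominate a two-dimensional coarse orthant, which prevents any quasi-isometry to the one-ended space $\RR_{\ge 0}$.

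The main obstacle is the descent of both partial Calabi quasi-morphisms from $\tHam(M,\omega)$ to $\Ham(M,\omega)$, which is where the cohomological hypothesis on $(X,\omega_X)$ really enters; without it the argument would only rule out a quasi-isometry for $\TK(\tHam(M,\omega))$. A secondary technical issue is to verify that the blow-up genuinely supplies two independent heavy idempotents together with the corresponding heaviness properties for $U_1$ and $U_2$; this uses Entov--Polterovich's heaviness of the exceptional divisor together with a heavy subset disjoint from it, both specific to the blow-up structure.
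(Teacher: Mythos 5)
Your final step is where the argument breaks. The inequality $q_g(f)\ \ge\ |\mu(f)|/(|\mu(g)|+D(\mu))$ only produces \emph{lower} bounds on Tsuboi distances (and, since $d$ is defined with a logarithm, these grow like $\log m$, $\log n$ rather than $|m|,|n|$, so the relevant coordinates would in any case be $(\log m,\log n)$). A family of points in $\TK(\Ham(M,\omega))$ about which you only know lower bounds on pairwise distances can always be placed consistently inside $\RR_{\ge 0}$: a quasi-isometry forces far-apart points to have far-apart images, and that constraint is satisfiable for any countable configuration. So ``dominating a two-dimensional coarse orthant'' from below is not an obstruction to being quasi-isometric to the half line, and no contradiction materializes. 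What actually drives Ishida's argument (and the paper's proof of the $\tHam$ analogue, Theorem \ref{general ishida}) is a complementary \emph{upper-bound} ingredient: the shrinking sequence $f_n=\ts_r^n(f)$ has bounded consecutive Tsuboi distance, because $\ts_r(f)$ is a product of a fixed number of conjugates of $f^{\pm1}$ (simplicity/fragmentation, Lemma \ref{ishida lem}(2)), while the support norm forces $d([f_n],[f_m])\ge C_1|n-m|$ (Lemma \ref{ishida lem}(1)); this makes the image of $\{[f_n]\}$ coarsely dense in a tail of the half line, so $[g^m]$ must come boundedly close to some $[f_{n_m}]$, contradicting the quasi-morphism lower bound $d([f_n],[g^m])\ge \log m+C_3$ (Proposition \ref{ishida ineq}). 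Your families $\{\varphi_1^m\varphi_2^n\}$ come with no upper bound on any $q$ whatsoever --- you never show, for instance, that $[\varphi_1^{m}\varphi_2^{n}]$ and $[\varphi_1^{m+1}\varphi_2^{n}]$ are at bounded Tsuboi distance --- so nothing pins any of your points near anything else on the half line.

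A secondary problem is the analytic input itself. You require \emph{two} linearly independent genuine homogeneous Calabi quasi-morphisms descending to $\Ham(M,\omega)$ with $\mu_i(\varphi_j)=\delta_{ij}$, obtained from two orthogonal \emph{field} idempotents of $QH^*(M)$. For a blow-up of a symplectically aspherical $(X,\omega_X)$ the complementary summand need not be a field, and the Entov--Polterovich construction on a non-field idempotent yields only a \emph{partial} Calabi quasi-morphism, for which the defect inequality you invoke is not available in that form (in the paper it is replaced by partial quasi-additivity controlled by $\|\cdot\|_U$, exploited for conjugates with $\|h_i\|_U=1$). Ishida's Remark 3.2, like the proof of Theorem \ref{general ishida}, needs only \emph{one} partial Calabi quasi-morphism with the Calabi property, combined with the shrinking construction; the hypotheses that $\omega_X$ and $c_1(X)$ vanish on $\pi_2(X)$ are used to descend that single quasi-morphism to $\Ham(M,\omega)$, not to manufacture two independent ones. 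So your proposal both overshoots on the input and omits the bounded-gap mechanism that the contradiction genuinely requires.
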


In light of these developments, we also study the quasi-isometric structures of the Tsuboi metric spaces for relatively simple groups. We first consider the Tsuboi metric space of $\tHam(M, \omega)$, and proved the below theorem (Theorem \ref{general ishida}). In this case, we can remove several hypothesises in Theorem \ref{original ishida2}, and we only need to assume that $(M, \omega)$ is a closed symplectic manifold.

\begin{thm}\label{general ishida}
Let $(M,\omega)$ be a closed symplectic manifold.
Then, $\left(\mathcal{M}(\tHam(M,\omega)),d\right)$ is not quasi-isometric to the half line $\RR_{\ge 0}$ with the standard metric.
\end{thm}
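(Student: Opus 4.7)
The strategy follows Ishida's approach to Theorem~\ref{original ishida2}, but replaces quasi-morphisms on $\Ham(M,\omega)$ (which require topological restrictions on $(M,\omega)$ to exist) by Entov--Polterovich partial Calabi quasi-morphisms on $\tHam(M,\omega)$, which exist for every closed symplectic manifold. This is the main reason the theorem holds in full generality once one passes to the universal cover.

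First, by Theorem~\ref{closed_rel_simpl}, $\tHam(M,\omega)$ is simple relative to $N=\pi_1(\Ham(M,\omega))$, so the Tsuboi metric space $(\mathcal{M}(\tHam(M,\omega)),d)$ is well-defined. Fix a nontrivial homogeneous partial Calabi quasi-morphism $\mu\colon \tHam(M,\omega)\to\RR$ with defect $D(\mu)$. Being homogeneous, $\mu$ is conjugation invariant and vanishes on $N$. Combining conjugation invariance with the quasi-morphism defect inequality, one obtains the key lower bound
\[
q_g(f)\ \geq\ \frac{|\mu(f)|-D(\mu)}{|\mu(g)|+D(\mu)} \qquad (g\in\tHam(M,\omega)\setminus N,\ f\in\tHam(M,\omega)),
\]
and the conjugation-invariant Hofer pseudonorm $\|\cdot\|_H$ furnishes an independent lower bound $q_g(f)\geq \|f\|_H/\|g\|_H$.

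Next, I would use the Calabi property of $\mu$ on displaceable subsets of $M$ to construct three families of elements of $\tHam(M,\omega)\setminus N$: a basepoint $g_0$ with $\mu(g_0)=0$ and $\|g_0\|_H$ of order one; a sequence $(f_n)$ with $\mu(f_n)=n$, obtained for example as iterates of the time-one map of a Hamiltonian of Calabi invariant one supported in a displaceable ball; and a sequence $(h_n)$ with $\mu(h_n)=0$ but $\|h_n\|_H\sim n$, obtained by concatenating Hamiltonians of opposite Calabi values supported in disjoint pieces of a common displaceable set. The two lower bounds above then force, for some $c>0$ and all large $n$,
\[
d([g_0],[f_n])\ \geq\ c\log n,\quad d([g_0],[h_n])\ \geq\ c\log n,\quad d([f_n],[h_n])\ \geq\ c\log n.
\]

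To contradict quasi-isometry with $\RR_{\geq 0}$, I would run a ``fat sphere'' argument: if one can additionally show that $d([g_0],[f_n])$ and $d([g_0],[h_n])$ are \emph{both} of order $\log n$ (i.e., matching upper bounds hold), then $[f_n]$ and $[h_n]$ lie in a common metric sphere $S_{R_n}$ of $\mathcal{M}$ centered at $[g_0]$ with $R_n\to\infty$, while $d([f_n],[h_n])\geq c\log n\to\infty$. Any quasi-isometry $\varphi\colon\mathcal{M}\to\RR_{\geq 0}$ sends $S_{R_n}$ into a subset of $\RR_{\geq 0}$ of uniformly bounded diameter, since metric spheres in the half line have at most two points. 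This forces $d([f_n],[h_n])$ to remain bounded, contradicting the lower bound.

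\textbf{Main obstacle.} The principal difficulty is the upper-bound half of the argument: producing polynomial-in-$n$ factorizations of $f_n$ and $h_n$ as products of conjugates of $g_0^{\pm1}$, and symmetrically bounding $q_{f_n}(g_0)$ and $q_{h_n}(g_0)$ so that the maxima in the definition of $d$ are controlled. This combinatorial/dynamical step---absent from the lower-bound theory based on quasi-morphisms---is the technical heart of the proof and requires fragmentation-type constructions in Hamiltonian dynamics adapted to the universal cover of an arbitrary closed symplectic manifold; it is the analogue of what drives Ishida's original argument in the $\Ham$ setting.
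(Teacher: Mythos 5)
Your plan diverges from the paper's argument in two places where it actually breaks. First, the lower bound $q_g(f)\ \geq\ \bigl(|\mu(f)|-D(\mu)\bigr)/\bigl(|\mu(g)|+D(\mu)\bigr)$ is not available for an Entov--Polterovich \emph{partial} Calabi quasi-morphism: by Definition \ref{def_of_PCQ}, the defect is only controlled by $C\min\{\|\phi\|_U,\|\psi\|_U\}$ for a displaceable $U$, so there is no global defect $D(\mu)$, and for a general $g\in\tHam(M,\omega)\setminus N$ the inequality has no justification (if a genuine homogeneous Calabi quasi-morphism existed on $\tHam(M,\omega)$ for every closed $(M,\omega)$, the theorem would be much easier, but none is known). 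The paper's Proposition \ref{ishida ineq}(3) gets a usable estimate precisely by taking the element whose conjugates appear in the factorization to be supported in a displaceable ball, so that each factor has $\|h_i\|_U=1$ and partial quasi-additivity applies with the fixed constant $C$; its value is then controlled via the shrinking map $\tilde s_r$ (Lemma \ref{shrinking lem}). Also, partial homogeneity does not give you conjugation invariance of $\mu$ or vanishing on $\pi_1(\Ham(M,\omega))$ for free.

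Second, the endgame is wrong. A $(\lambda,\epsilon)$-quasi-isometry does not send a metric sphere of radius $R_n$ to a set of uniformly bounded diameter in $\RR_{\ge 0}$; it sends it into an ``annulus'' of width roughly $(\lambda-\lambda^{-1})R_n+2\epsilon$, which grows with $R_n$. Worse, a configuration in which $d([g_0],[f_n])$, $d([g_0],[h_n])$ and $d([f_n],[h_n])$ are all of order $\log n$ embeds perfectly well into the half line (take points at $\log n$ and $2\log n$), so even if you supplied the missing upper bounds, no contradiction would follow. What the paper uses instead is a different geometric structure: a coarse ray $f_n=\tilde s_r^n(f)$ with \emph{bounded} consecutive steps (from relative simplicity of $\Ker(\Cal_U)$, Theorem \ref{open_rel_simpl}, and equivariance of $\tilde s_r$) and \emph{linearly} growing separation $d([f_n],[f_m])\geq C_1|n-m|$ (a support-volume estimate, not a quasi-morphism estimate), together with points $[g^m]$ whose distance to \emph{every} point of the ray is at least $\log m+C_3$. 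Then a discrete intermediate-value argument in $\RR_{\ge 0}$ (some $\Phi([f_{n_m}])$ must land within a bounded gap of $\Phi([g^m])$) produces the contradiction. This ray-plus-uniformly-far-family structure, and the bounded-step upper bound coming from relative simplicity rather than from a fragmentation construction for arbitrary elements, is exactly the ingredient your proposal is missing and cannot be replaced by the sphere argument as stated.
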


We also have the relative version of 
Theorem \ref{theorem A_infinity_intro}.

\begin{theorem} \label{theorem S_infinity_intro}
Let $S_\infty$ be the infinite symmetric group $\bigcup_{n=1}^\infty S_n$.
The metric space $(\TK(S_\infty), d)$ is quasi-isometric to the half line $\RR_{\ge 0}$ with the standard metric.
\end{theorem}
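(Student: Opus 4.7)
The plan is to construct a quasi-isometry
\[ \Phi\colon \mathcal{M}(S_\infty)\longrightarrow \mathbb{R}_{\geq 0},\qquad \Phi([g])=\log|\supp(g)|, \]
paralleling the proof of Theorem \ref{theorem A_infinity_intro} for $A_\infty$. A classical fact states that the only non-trivial proper normal subgroup of $S_\infty=\bigcup_{n\geq 1}S_n$ is $A_\infty$; hence $A_\infty$ is the maximum normal subgroup and $\mathcal{M}(S_\infty)$ consists of symmetrized conjugacy classes of odd finitely-supported permutations. For such $g$ one has $|\supp(g)|\geq 2$, and $|\supp(\cdot)|$ is invariant under conjugation and inversion, so $\Phi$ is well-defined and takes values in $[\log 2,\infty)$. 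Quasi-surjectivity is easy: for each integer $r\geq 1$, the $2r$-cycle $g_r=(1,2,\ldots,2r)$ is odd and realizes $\Phi([g_r])=\log(2r)$, and these values are $1$-dense in $[\log 2,\infty)$.

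The lower bound $d([f],[g])\geq|\Phi([f])-\Phi([g])|$ is elementary: writing $n_h=|\supp(h)|$, a product of $k$ conjugates of $g^{\pm 1}$ has support of size at most $kn_g$, so $q_g(f)\geq n_f/n_g$. Combining with the analogous inequality for $q_f(g)$ and taking $\log$ of the maximum yields the bound. The upper bound reduces to the following key estimate: there is an absolute constant $C$ such that
\[ q_g(f)\leq C\lceil n_f/n_g\rceil \qquad\text{for all odd }f,g\in S_\infty. \]
Granting this, applied in both directions, one obtains $d([f],[g])\leq|\Phi([f])-\Phi([g])|+O(1)$. The key estimate is obtained in two steps: (a) decompose $f=f_1\cdots f_m$ with $|\supp(f_i)|\leq n_g$ and $m=O(\lceil n_f/n_g\rceil)$, by splitting any cycle of length $>n_g$ into cycles of length $\leq n_g$ via the identity $(a_1\cdots a_\ell)=(a_1\cdots a_{n_g})(a_{n_g}\,a_{n_g+1}\cdots a_{2n_g-1})\cdots$ and then greedily grouping the resulting disjoint cycles into collections of total support $\leq n_g$; and (b) prove the uniform statement that every $h\in S_\infty$ with $|\supp(h)|\leq n_g$ satisfies $q_g(h)\leq C_0$ for a constant $C_0$ independent of the cycle type of $g$.

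The main obstacle is the uniform bound in step (b). After conjugating in $S_\infty$, one may assume $\supp(h)\subset\supp(g)$. The strategy is: first, exhibit a single transposition as a product of $O(1)$ conjugates of $g^{\pm 1}$ (for instance $(1,2)=(1,2,3,4)\cdot(1,3,4,2)\cdot(1,2,3,4)$ handles $g=(1,2,3,4)$, and the general case is handled by the commutator identity $g\cdot(sg^{-1}s^{-1})=[g,s^{-1}]$ with $s$ moving only a few points of $\supp(g)$, followed by multiplication by a further conjugate of $g^{\pm 1}$ to correct parity); second, use the classical fact that any permutation of $\{1,\ldots,n\}$ is a product of two $n$-cycles to build $h$ from $O(1)$ conjugates of $g^{\pm 1}$. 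The delicate point is achieving constants uniform across cycle types of $g$; this will require a case analysis distinguishing whether $g$ contains an even-length cycle of length $\geq 4$ (where the commutator approach directly produces short cycles supported in $\supp(g)$) and otherwise reducing to that case by first multiplying $g$ by an appropriate conjugate of $g^{\pm 1}$ to produce such a cycle.
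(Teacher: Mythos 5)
Your overall frame is sound and close in spirit to the paper's: the lower bound $d([f],[g])\ge |\log\nu(f)-\log\nu(g)|$ via the support norm $\nu(\sigma)=\#\supp(\sigma)$, the identification of $A_\infty$ as the maximum normal subgroup, and the reduction of the upper bound to the estimate $q_g(f)\le C\lceil \nu(f)/\nu(g)\rceil$ are all correct (the paper packages the same idea differently, exhibiting the coarsely dense subset $\{[\iota_{3^k}]\}$, $\iota_k=(1,2)\cdots(2k-1,2k)$, which it shows is isometric to $(\log 3)\cdot\ZZ_{\ge 0}$ using $q_{\iota_n}(\iota_{mn})=m$ and $q_{\iota_{mn}}(\iota_n)=3$). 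But your step (b) --- the uniform bound $q_g(h)\le C_0$ for all $h$ with $\#\supp(h)\le \#\supp(g)$, with $C_0$ independent of the cycle type of $g$ --- is precisely the nontrivial content of the whole theorem, and your sketch of it has a genuine gap, in two places. First, the proposed route to a transposition is flawed as stated: the commutator $g\cdot(sg^{-1}s^{-1})$ with $s$ moving few points does give an element of small support, but it is even, and ``multiplication by a further conjugate of $g^{\pm1}$ to correct parity'' destroys the small support, since the product then has support of size comparable to $\#\supp(g)$ again. Producing a transposition as a product of three conjugates of $g^{\pm1}$ requires arranging three large supports to cancel (as in your $4$-cycle example, or as in the paper's Lemma on $\iota_l$ versus $\iota_k$), and for an arbitrary odd cycle type this is exactly the combinatorial work you have not done.

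Second, and more seriously, even granting a transposition at cost $O(1)$, the passage to arbitrary $h$ with $\#\supp(h)\le\#\supp(g)$ via ``every permutation of $\{1,\dots,n\}$ is a product of two $n$-cycles'' requires an $n$-cycle with $n$ comparable to $\#\supp(g)$ to be itself a product of $O(1)$ conjugates of $g^{\pm1}$, uniformly over all cycle types of $g$; a transposition is of no help here, since building $h$ from transpositions costs up to $\nu(h)$ factors. This missing ingredient is exactly what the paper's chain of lemmas supplies (writing some $\iota_k$ with $k\ge\lceil\nu(g)/6\rceil$ as three conjugates of $g$, writing any $\sigma$ as three conjugates of some $\iota_{k_1}$ with $k_1\le\nu(\sigma)$, and comparing $\iota_l$ with $\iota_k$ of the same parity, building on Kodama's lemmas for cycles), together with the parity bookkeeping that your constant-$C$ formulation sweeps under the rug. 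Your closing remark that ``this will require a case analysis'' acknowledges the issue but does not resolve it; as it stands the key estimate, and hence the upper bound for the quasi-isometry, is unproven.
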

Note that the infinite symmetric group $S_\infty$ is relatively simple (Lemma \ref{lemma S relatively simple}) and thus we can define the metric space $(\TK(S_\infty), d)$.

\subsection{Organization of the paper}
In section \ref{sec:prelim}, we review preliminaries, including symplectic geometry. 
In section \ref{ham_simp_sec}, we prove the relative simplicity of transformation groups which comes from symplectic geometry (Theorems \ref{closed_rel_simpl} and \ref{open_rel_simpl}).
In section \ref{lphofer}, we discuss applications of relative simplicity to the $L^p$-Hofer metric (Theorems \ref{closed_lphofer} and \ref{open_lphofer}).
In section \ref{Yoshiko_section}, we consider the relative simplicity of covering groups of simple topological groups and establish Theorem \ref{mtst_simple}.
In section \ref{relative_simple_other}, we prove relative simplicity for some transformation groups that are not derived from symplectic geometry.
In section \ref{well-def_aut}, 

we prove that the autonomous norm is well-defined 

(Theorem \ref{auto_thm}).
In section \ref{tsuboi_metric} and \ref{symmetric_group}, we study the Tsuboi metric on the universal covering of Hamiltonian diffeomorphism groups and the infinite symmetric group, respectively. 
Finally, in Appendix \ref{other_appen}, we discuss quasi-simplicity and maximal normal subgroups.

\subsection*{Notation and conventions}

Unless otherwise specified, a manifold is assumed to be a connected $C^\infty$-manifold. Topological groups are assumed to be Hausdorff.

%

\section*{Acknowledgment}
The first author is supported in part by JSPS KAKENHI Grant Number 21K13790. 
The second author is partially supported by JSPS KAKENHI Grant Number 24K16921.
The fifth author is partially supported by JSPS KAKENHI Grant Number 23K12975.
The sixth author is partially supported by JSPS KAKENHI Grant Number 21K13787.

The authors would like to express their gratitude to 
Tatsuhiko Yagasaki for pointing out an error in the proof of Theorem 1.4 in the earlier draft.
They also would like to thank
Kazuhiro Kawamura,
Kyo Nishiyama,
Hiraku Nozawa,
for answering our questions and fruitful discussions.

\section{Preliminaries} \label{sec:prelim}

\subsection{The universal covering of transformation group}\label{def of univ cov}
A transformation group we treat in the present paper is not necessarily locally path-connected and so does not admit the universal covering in the sense of classical algebraic topology.
Here, we provide the definition of the universal covering of a subgroup of the group of smooth diffeomorphisms.
These are not necessarily local homeomorphisms (for example, see Remark \ref{rem_kislev}).
However, in accordance with convention, these are also referred to as the universal covering.

Let $M$ be a smooth manifold and $G$ a subgroup of $\mathrm{Diff}^c(M)$, where $\mathrm{Diff}^c(M)$ denotes the group of smooth diffeomorphisms with compact support.
An isotopy $\gamma\colon [0,1] \to G$ is called a \textit{smooth isotopy} in $G$ if the map $[0,1]\times M \to M$, $(t,x) \mapsto \gamma(t)(x)$ is a smooth map  and there exists a compact subset $K$ of $M$ such that for every $t\in[0,1]$, the support of $\gamma(t)$ is contained in $K$.
Let $\gamma_1,\gamma_2$ be smooth isotopies in $G$.
Smooth isotopies $\gamma_1$ and $\gamma_2$ in $G$ 
 are  called \textit{smoothly homotopic} in $G$ if there exists a map $\Gamma \colon [0,1]\times[0,1] \to G$ such that
\begin{itemize}
\item the map $[0,1]\times[0,1]\times M \to M$, $(s,t,x) \mapsto \Gamma(s,t)(x)$ is a smooth map.
\item $s\mapsto\Gamma(s,0)$, $s\mapsto\Gamma(s,1)$ are constant maps.
    \item $\Gamma(0,t)=\gamma_1(t)$, $\Gamma(1,t)=\gamma_2(t)$ for every $t\in[0,1]$.
\end{itemize}
For a smooth isotopy $\gamma$, let $[\gamma]$ denote the smooth homotopy class in $G$ relative to fixed ends.
Then, we define the set $\tilde{G}$ by
\[\tilde{G}=\{[\gamma] \mid \gamma\colon [0,1] \to G \text{ is a smooth isotopy with }\gamma(0)=\mathrm{Id}\}.\]

One can see that this $\tilde{G}$ is a group.
We define the map $\pi\colon \tilde{G} \to G$ to be the natural map attaining the end of the isotopy.
One can also see that this $\pi$ is a group homomorphism.
We call the group $\tilde{G}$ and the homomorphism $\pi\colon\tilde{G} \to G$ the \textit{universal covering of $G$}.
The kernel of the universal covering $\pi\colon\tilde{G} \to G$ is called the \textit{fundamental group of $G$} and denoted by $\pi_1(G)$.
 The image of the universal covering $\pi\colon\tilde{G} \to G$ is called the \textit{identity component of $G$}.
\begin{remark}
We note that $G$ is not necessarily locally  smoothly path-connected (see Remark \ref{rem_kislev}) and so the natural map $\pi\colon\tilde{G} \to G$ cannot be  genuinely a covering in the usual sense in algebraic topology.
However, due to the convention in the studies of transformation groups and symplectic geometry (for example, see \cite{Ban}, \cite{Ban97}, \cite{EP03} and \cite{O05}), we use the word ``universal covering''.
\end{remark}

%
%

\subsection{Relative simplicity}\label{base of maximum normal subgroup}

The purpose of this subsection is to summarize direct consequences that can be immediately derived from the definition of relative simplicity (see Definition \ref{definition relatively simple}), as well as related concepts and historical background.

Recall that a group $G$ is relatively simple if it has the maximum (proper) 
 normal
subgroup, and that if $N$ is the maximum 
 normal
subgroup of $G$, then $G$ is said to be simple relative to $N$.

\begin{example} \label{example relatively simple}
The following are examples of relatively simple groups:
\begin{enumerate}[(1)]
\item Every simple group $G$ is relatively simple since $G$ is simple relative to the trivial subgroup $\{ 1_G\}$ of $G$.

\item For a prime number $p$ and a positive integer $n$, $\ZZ / p^n \ZZ$ is simple relative to $p\ZZ / p^n \ZZ$.

\item For $n \ge 2$, the symmetric group $S_n$ is simple relative to the alternatig group $A_n$ (see Lemma \ref{lemma S relatively simple}).

\item Let $G$ be a relatively simple group, $H$ a non-trivial group, and $f \colon G \to H$ a surjective group homomorphism. Then $H$ is relatively simple.
\end{enumerate}
\end{example}

\begin{example} \label{example not relatively simple}
On the other hand, the following are examples of groups which are not relatively simple:
\begin{enumerate}[(1)]
\item The additive group $\ZZ$ of integers is not relatively simple. Indeed, there is no proper normal subgroup of $\ZZ$ containing both $2\ZZ$ and $3\ZZ$.

\item For distinct primes $p$ and $q$, $\ZZ / pq \ZZ$ is not relatively simple.

\item If $H \ne 1$ is not a relatively simple group and $f \colon G \to H$ is a surjective group homomorphism, then $G$ is not relatively simple. This is the contrapositive of (4) of Example \ref{example relatively simple}. For instance, a group admits a group homomorphism onto $\ZZ$ is not relatively simple.
\end{enumerate}
\end{example}



The following rephrasing of relative simplicity will be used in the subsequent sections without further notice.

\begin{lem} \label{rel simpl equiv}
Let $G$ be a group and $N$ a proper normal subgroup of $G$.
The group $G$ is simple relative to $N$ if and only if, for every $f \in G$ and $g \in G \setminus N$, $f$ is represented as a product of conjugates of $g$ or $g^{-1}$.
\end{lem}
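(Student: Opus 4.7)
The plan is to recognize this lemma as essentially a reformulation of condition (2) in Definition \ref{definition relatively simple}, via the standard description of the normal closure of a single element. For each $g \in G$, let $\llangle g \rrangle_G$ denote the normal closure of $\{g\}$ in $G$, that is, the smallest normal subgroup of $G$ containing $g$.

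The key preliminary step is the well-known identity
\[
\llangle g \rrangle_G = \left\{ \prod_{i=1}^{k} h_i\, g^{\varepsilon_i}\, h_i^{-1} : k \geq 0,\ h_i \in G,\ \varepsilon_i \in \{+1,-1\}\right\}.
\]
The set on the right contains $g$, is closed under products and inverses (after reindexing and flipping signs of the exponents), and is closed under conjugation by arbitrary elements of $G$, so it is a normal subgroup of $G$ containing $g$; conversely, any normal subgroup of $G$ containing $g$ must contain each such product. Consequently, the statement ``every $f \in G$ is a product of conjugates of $g$ or $g^{-1}$'' is equivalent to $\llangle g \rrangle_G = G$, that is, to $g$ being a normal generator of $G$.

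With this identification the lemma is immediate. By condition (2) of Definition \ref{definition relatively simple}, $G$ is simple relative to $N$ if and only if every $g \in G \setminus N$ is a normal generator of $G$, which by the previous paragraph is precisely the assertion that every $f \in G$ is expressible as a finite product of conjugates of $g$ or $g^{-1}$. No genuine obstacle arises here: the proof is a bookkeeping exercise that unwinds the equivalence between ``normal generator'' and ``product of conjugates'', and all the real content is already built into the definition of the maximum normal subgroup.
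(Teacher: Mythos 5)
Your proof is correct and matches what the paper intends: the paper states this lemma as an immediate rephrasing of Definition \ref{definition relatively simple} and omits the argument, and your write-up is exactly that unwinding, identifying ``$g$ is a normal generator'' with ``every $f\in G$ is a product of conjugates of $g$ or $g^{-1}$'' via the standard description of the normal closure $\llangle g\rrangle_G$. Nothing further is needed.
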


We also have the following observation. The proof is immediate and hence is omitted.

\begin{prop}\label{rel_simple_is_simple}
Let $G$ be a group and $N$ a normal subgroup of $G$.
If $G$ is simple relative to $N$, then $G/N$ is a simple group.
\end{prop}

Note that the converse of Proposition \ref{rel_simple_is_simple} does not hold. Indeed, $\ZZ / 2\ZZ$ is simple, but $\ZZ$ is not relatively simple.   

\subsection{Symplectic geometry}\label{symp basis}
Let $(M,\omega)$ be a $2n$-dimensional symplectic manifold.

For a smooth function $F$ on $M$ with compact support, the \textit{Hamiltonian vector field} $X_{F}\in \mathfrak{X}(M)$ associated to $F$ is defined by
\[
	\iota_{X_{F}}\omega=-dF.
\]
Let $H$ be a time-dependent Hamiltonian with compact support,
i.e., a smooth function $H\colon [0,1] \times M\to\RR$ with compact support.
We set $H_t=H(t,\cdot) \colon M \to \RR$ for $t\in [0,1]$.
The \textit{Hamiltonian isotopy} $\{\varphi_H^t\}_{t\in\RR}$ associated to $H$ is defined by
\[
	\begin{cases}
		\varphi_H^0=\mathrm{Id},\\
		\frac{d}{dt}\varphi_H^t=X_{H_t}\circ\varphi_H^t\quad \text{for all}\ t\in\RR,
	\end{cases}
\]
and its time-one map $\varphi_H=\varphi_H^1$ is referred to as the \textit{Hamiltonian diffeomorphism with compact support} generated by $H$.
Let  $\Ham(M,\omega)$  denote the group of Hamiltonian diffeomorphisms of  $(M,\omega)$  with compact supports.

For an open symplectic manifold $(M,\omega)$, we recall that the \textit{Calabi homomorphism}
is a function $\mathrm{Cal}_{M} \colon \tHam(M,\omega)\to\mathbb{R}$ defined by
\[
	\mathrm{Cal}_{M}(\varphi_H)=\int_0^1\int_M H_t \omega^n\,dt,
\]
 where $H \colon [0,1] \times M \to \RR$ is a smooth function. It is known that the Calabi homomorphism is a well-defined group homomorphism (see \cite{Cala}, \cite{Ban}, \cite[Theorem 4.2.7]{Ban97}, \cite{MS}, \cite{PR}, \cite[Proposition 2.5.3]{Oh15a}).

The Calabi homomorphism induces a map on $\Ham(M,\omega)$ in the following way.
Set
$\Lambda_\omega = \Cal_M\left(\pi_1(\Ham(M,\omega))\right)$.
Then, we obtain the homomorphism
$\underline{\Cal}_M \colon \Ham(M,\omega) \to \RR/\Lambda_\omega$.
When $(M,\omega)$ is exact, i.e., $\omega$ is an exact form, it is known that $\Lambda_\omega=\{0\}$ and $\underline{\Cal}_M$ 
is a real-valued function
on $\Ham(M,\omega)$ and also called the Calabi homomorphism (see \cite{Cala}, \cite{Ban}, \cite[Proposition 4.2.12]{Ban97}, \cite{MS}, \cite{Hum}, \cite{PR}, \cite[Theorem 2.5.6]{Oh15a}).

Let $(M,\omega)$ be a symplectic manifold and $U$ a non-empty open subset of $M$.
Then, the inclusion $U \hookrightarrow M$ induces the homomorphism $p_{U,M} \colon \tHam(U,\omega|_U) \to \tHam(M,\omega)$, and we let $\tHam_U(M,\omega)$ denote the image of $p_{U,M}$.
An element $h$ of $\tHam(M,\omega)$ \textit{displaces} $U$ if $\pi(h)(U)\cap  \overline{U} =\emptyset$ where $\pi \colon \tHam(M,\omega) \to \Ham(M,\omega)$ is the universal covering.


Here, we review a partial Calabi quasi-morphism constructed by Entov and Polterovich \cite{EP06}. First, we review the definition of fragmentation norm.
Let $(M,\omega)$ be a symplectic manifold and $U$ a non-empty open subset of $M$.
Then, as we will explain in Lemma \ref{frag_lemma}, every $f \in \tHam(M,\omega)$ can be written as a product of conjugation of element of $\tHam_U(M,\omega)$.
We define the fragmentation norm $\|f\|_U$ of $f \in \tHam(M,\omega)$ 
as the minimum of such numbers. In other words,
 \[
\|f\|_U=\min
\left\{k\, \middle| \,
\begin{gathered} 
    \exists f_1,\exists f_2,\ldots,\exists f_k \in \tHam_U(M,\omega), \exists h_1,\exists h_2\ldots,\exists h_k \in \tHam(M,\omega) \\
    \text{ such that } f = h_1^{-1}f_1h_1 \cdot h_2^{-1}f_2h_2 \cdot \cdots \cdot h_k^{-1}f_kh_k
\end{gathered}
\right\}.
\] 


Next, we review the definition of partial Calabi quasi-morphism
\begin{definition}[\cite{EP06}, \cite{En14}]\label{def_of_PCQ}
    Let $(M,\omega)$ be a connected closed symplectic manifold.
    A function $\mu\colon \tHam(M,\omega)\to \RR$ is called a \textit{partial Calabi quasi-morphism} if $\mu$ satisfies the following conditions.

\begin{itemize}
  \item[$(1)$] (Partial homogeneity) $\mu(\phi^k) = k\mu(\phi)$ for every $\phi \in \tHam(M,\omega)$ and every $k \in \ZZ_{\geq 0}$.
  \item[$(2)$] (Partial quasi-additivity) For a displaceable open set $U \subset M$, there exists $C > 0$ so that 
  \[
  |\mu(\phi\psi) - \mu(\phi) - \mu(\psi)| \leq C \min\{\|\phi\|_U, \|\psi\|_U\}
  \]
  for every $\phi, \psi \in \tHam(M,\omega)$.
  \item[$(3)$] (Calabi property)
   Let $U$ be an open and displaceable subset of $M$, then $\mu\circ p_{U,m}(\phi)=\Cal_U(\phi)$ for every $\phi \in \tHam_U(M,\omega)$.
\end{itemize}
\end{definition}

\begin{remark}
There are different definitions of partial Calabi quasi-morphisms as in \cite{MVZ}, \cite{En14}, and \cite{KO19}.
\end{remark}

\begin{remark} \label{calabi qm remark}
When Entov and Polterovich first constructed partial Calabi quasi-morphisms, 
they imposed certain
assumptions on symplectic manifolds.
However, as is written in \cite[Subsection 22.4]{Oh15b} for instance, their construction works well for every closed symplectic manifold.
\end{remark}


\subsection{Hofer's metric and conjugation-invariant norms}\label{sec:Hofer_metric}

Let $p\geq 1$ be an integer.
Given a smooth function $H\colon M\to\RR$ with compact support, we define the \textit{$L^p$-norm} of $H$ to be
\[
    \|H\|_{L^p} = \left(\int_M \lvert H\rvert^p\omega^n\right)^{1/p}.
\]
Moreover we set
\[
    \|H\|_{L^{\infty}} = \sup_M{\lvert H\rvert}.
\]
For $p \geq 1$ or $p=\infty$ we define a function $\rho_p\colon \Ham(M,\omega)\times \Ham(M,\omega)\to \RR$ by the formula
\[
    \rho_p(\phi,\psi) = \inf_{\varphi_H^1=\phi^{-1}\psi}\int_0^1\|H_t\|_{L^p}\,dt,\quad \phi,\psi\in\Ham(M,\omega),
\]
where the infimum is taken over all Hamiltonian functions $H\colon [0,1]\times M\to\RR$ with compact supports
which define the Hamiltonian diffeomorphism $\phi^{-1}\psi$, and $H_t=H(t,\cdot)$ for each $t$.
Then the function $\rho_p$ defines a bi-invariant pseudo-metric on $\Ham(M,\omega)$ (see, e.g., \cite[Section 2.2]{P01}, \cite[Proposition 12.3.1]{MS}, \cite[Subsection 1.3.3]{PR}).
Moreover, for $p=\infty$, it is a deep fact that the pseudo-metric $\rho_{\infty}$ is a genuine metric on $\Ham(M,\omega)$ called the \textit{Hofer metric}  or \textit{Hofer's metric} \cite{Ho90,P93,LM95}.
On the other hand, Eliashberg and Polterovich \cite[Corollary 1.3.B]{EP93} showed that the pseudo-metric $\rho_p$ is not a metric for every positive integer $p$.

Similarly, one can define a bi-invariant pseudo-metric $\tilde{\rho}_p$ on the universal covering $\tHam(M,\omega)$ by the formula
\[
    \tilde{\rho}_p(\phi,\psi) = \inf_{[\{\varphi_H^t\}]=\phi^{-1}\psi}\int_0^1\|H_t\|_{L^p}\,dt,\quad \phi,\psi\in\tHam(M,\omega),
\]
where the infimum is taken over all Hamiltonian functions $H\colon [0,1]\times M\to\RR$ with compact supports
which define the Hamiltonian isotopy $\phi^{-1}\psi$.

We define a function $\tilde{\nu}_p \colon \tHam(M,\omega) \to\RR$ by
\[\tilde{\nu}_p(f) = \tilde{\rho}_p({\rm Id},f).\]
This $\tilde{\nu}_p$ is a conjugation-invariant pseudo-norm in the sense of \cite{BIP}.
Here, we review the definition of the conjugation-invariant norm.
\begin{definition}[{\cite{BIP}}]\label{cin}
Let $G$ be a group. A function $\nu\colon G\to \mathbb{R}_{\geq0}$ is \textit{ a conjugation-invariant norm} on $G$ if $\nu$ satisfies the following axioms:
\begin{itemize}
\item[(1)] $\nu(1)=0$;
\item[(2)] $\nu(f)=\nu(f^{-1})$ for every $f\in G$;
\item[(3)] $\nu(fg)\leq \nu(f)+\nu(g)$ for every $f,g\in G$;
\item[(4)] $\nu(f)=\nu(gfg^{-1})$ for every $f,g\in G$;
\item[(5)] $\nu(f)>0$ for every $f\neq 1\in G$.
\end{itemize}
A function $\nu\colon G\to \mathbb{R}$ is a \emph{conjugation-invariant pseudo-norm} on $G$ if $\nu$ satisfies the above axioms (1),(2),(3) and (4).
\end{definition}

One of the most famous examples of conjugation-invariant norm is the commutator length.
The commutator length and its stabilization have a long history of study  (see Calegari's famous book \cite{Ca}).
For instance, the commuator length of the mapping class group is related to fiber structures of 4-dimensional manifolds and has studied for a long time  (see \cite{EK}, \cite{CMS}, and \cite{BBF}).
The commuator length of the transformation groups has been studied by various researchers (see \cite{BIP}, \cite{T09}, \cite{T17} and \cite{BHW}).

The fragmentation norm $\|\cdot\|_U\colon \tHam(M,\omega) \to \RR$ defined in Subsection \ref{symp basis} is also a conjugation-invariant norm.
The ``support'' of a group element is often regarded 
 as
a conjugation-invariant norm (Examples 1.5 and 1.19 of \cite{BIP}).
In this paper, we similarly regard ``support" as a conjugation-invariant norm (see equation (\ref{symmetric support})).



\section{Relative simplicity in symplectic geometry}\label{ham_simp_sec}

In this section, we will prove Theorems \ref{closed_rel_simpl} and \ref{open_rel_simpl}.
We  note that the proofs of Theorems \ref{closed_rel_simpl} and \ref{open_rel_simpl} are parallel to those  of simplicity of $\Ham(M,\omega)$ and $\Ker(\Cal)$ \cite{Ban}.

We use the following lemma.
\begin{lem}\label{disp_lem}
Let $(M,\omega)$ be a symplectic manifold and $U$ a non-empty open subset of $M$.
Assume that $h \in \tHam(M,\omega)$ displaces $U$.
Then for every $f,g \in \tHam_U(M,\omega)$
the commutator $[f,g]$ can be written as a product of 
 symmetrized
conjugates of $h$.
\end{lem}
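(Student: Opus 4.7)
The plan is to carry out the classical Banyaga-style commutator identity. Granting for the moment that $\tilde f := hfh^{-1}$ commutes with $g$ in $\tHam(M,\omega)$, I would write
\[
[f,g] \;=\; f\tilde f^{-1}\cdot\tilde f g f^{-1} g^{-1} \;=\; (f\tilde f^{-1})\, g\, (\tilde f f^{-1})\, g^{-1} \;=\; [f,h]\cdot g\,[f,h]^{-1}g^{-1} \;=\; \bigl[[f,h],\, g\bigr],
\]
where the second equality uses the commutativity $\tilde f g = g \tilde f$, and the third uses $f\tilde f^{-1} = [f,h]$ together with $\tilde f f^{-1} = [h,f] = [f,h]^{-1}$. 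Expanding once more gives
\[
\bigl[[f,h],\,g\bigr] \;=\; (fhf^{-1})\cdot h^{-1}\cdot (ghg^{-1})\cdot \bigl((gf)h^{-1}(gf)^{-1}\bigr),
\]
and each of the four factors is a conjugate of $h^{\pm 1}$, i.e.\ a symmetrized conjugate of $h$. This gives the desired conclusion with an explicit bound of $4$ factors.

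The real content is the claim that $\tilde f := hfh^{-1}$ and $g$ commute in $\tHam(M,\omega)$ and not merely in $\Ham(M,\omega)$. In $\Ham$ this is immediate from disjointness of supports, but the ``obvious'' pointwise representative $t\mapsto h_t f_t h_t^{-1}$ of $\tilde f$ has support $h_t(\supp f_t)$ which meets $U$ at $t$ close to $0$, so this particular isotopy is not disjoint from any isotopy representing $g$. My remedy is to use instead the isotopy
\[
\psi_t \;:=\; h_1 f_t h_1^{-1},\qquad t\in[0,1],
\]
whose support lies inside $h_1(U) = h(U)$ for every $t$, hence is disjoint from $\overline{U}$ by the displacement hypothesis. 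To see that $\psi$ still represents $\tilde f$ in $\tHam(M,\omega)$, I will exhibit the explicit smooth homotopy with fixed endpoints
\[
\Xi(s,t) \;=\; h_{\,s+t(1-s)}\, f_t\, h_{\,s+t(1-s)}^{-1},\qquad (s,t)\in[0,1]^2,
\]
for which $\Xi(0,t) = h_t f_t h_t^{-1}$ is the pointwise representative of $hfh^{-1}$, $\Xi(1,t) = \psi_t$, and $\Xi(s,0) = \mathrm{id}$, $\Xi(s,1) = h_1 f_1 h_1^{-1}$ are constant in $s$. Here $\{h_t\}$ is any Hamiltonian isotopy representing $h$ and $\{f_t\}$ is an isotopy representing $f$ with support in $U$, which exists because $f \in \tHam_U(M,\omega)$. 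Once $\psi$ is recognised as a representative of $\tilde f$, choosing an isotopy $\{g_t\}$ for $g$ with $\supp g_t \subset U$ gives $\psi_t g_t \psi_t^{-1} = g_t$ pointwise in $t$, so the pointwise product representative of $\tilde f g \tilde f^{-1} g^{-1}$ is the constant isotopy at $\mathrm{id}$, and hence $\tilde f g = g \tilde f$ in $\tHam(M,\omega)$.

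The main obstacle is precisely this upgrade from $\Ham(M,\omega)$ to $\tHam(M,\omega)$: the four-line commutator calculation is pure group theory, but it rests on a commutativity statement which is not automatic in the universal cover. The homotopy $\Xi$ above is the technical device that replaces the ``support-sweeping'' pointwise representative by one whose support is permanently inside $h(U)$; once that is in place the algebraic identity $[f,g] = \bigl[[f,h],g\bigr]$ and the expansion into four symmetrized conjugates of $h$ finish the proof.
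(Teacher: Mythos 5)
Your proof is correct and follows essentially the same route as the paper: the paper inserts $hg^{-1}h^{-1}$ (which commutes with $f$ and $g$ since it lies in $\tHam_{h(U)}(M,\omega)$) and rearranges $[f,g]$ into four symmetrized conjugates of $h$, while you conjugate $f$ instead of $g^{-1}$ and arrive at the same four-factor expression. Your explicit homotopy $\Xi$, showing that $hfh^{-1}$ has a representative isotopy supported in $h(U)$ and hence commutes with $g$ already in the universal covering, merely makes explicit the point the paper's proof asserts without detail.
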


An argument similar to Lemma \ref{disp_lem} in the case of $\Diff(M)$ or $\Ham(M,\omega)$ is very important and well known in the group theoretic study of transformation groups (for example, see \cite{Ban}, \cite[Lemma 2.3.B]{EP93}, \cite[Theorem 2.1.7]{Ban97}, \cite[Lemma 2.4.B]{P01}, and \cite[Theorem 2.2 (ii)]{BIP}).
Lemma \ref{disp_lem} is a variant of that classical argument to the universal covering.

\begin{proof}[Proof of Lemma \ref{disp_lem}]
Since $h$ displaces $U$, $hg^{-1}h^{-1} \in \tHam_{h(U)}(M,\omega)$ commutes with $f,g \in \tHam_U(M,\omega)$.
Hence,
\begin{align*}
[f,g] & = fgf^{-1}g^{-1} \\
& =  fg(hg^{-1}h^{-1})f^{-1}(hg^{-1}h^{-1})^{-1}g^{-1} \\
& =  \left((fg)h(fg)^{-1}\right)(fh^{-1}f^{-1})h(gh^{-1}g^{-1}).
\end{align*}
Thus, $[f,g]$ can be written as a product of symmetric conjugates of $h$.
\end{proof}

First we prove Theorem \ref{closed_rel_simpl}.
We use the following lemma.
\begin{lem}[Banyaga's fragmentation lemma]\label{frag_lemma}
Let $(M,\omega)$ be a symplectic manifold and $U$ a non-empty open subset of $M$.
Then, every $f \in \tHam(M,\omega)$ can be written as a product of conjugates of elements of $\tHam_U(M,\omega)$.
Moreover, if $M$ is closed, every $f \in \tHam(M,\omega)$ can be written as a product of conjugates of elements of $\Ker(\Cal_U)$.
\end{lem}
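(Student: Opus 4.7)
My plan is to adapt the classical Banyaga fragmentation argument to the universal cover. Represent $f \in \tHam(M,\omega)$ by a compactly supported Hamiltonian isotopy $\{\varphi_H^t\}_{t\in[0,1]}$ generated by $H \colon [0,1] \times M \to \RR$. Since $M$ is connected and $U$ is a nonempty open set, for every point in the support of the isotopy one can find a Darboux-type open neighborhood that can be mapped into $U$ by some element of $\Ham(M,\omega)$; by compactness choose a finite cover $V_1,\ldots,V_N$ of the support with accompanying Hamiltonian diffeomorphisms $h_i \in \Ham(M,\omega)$ satisfying $h_i(V_i) \subset U$. Pick a partition of unity $\{\rho_i\}$ subordinate to $\{V_i\}$. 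Combining the decomposition $H = \sum_i \rho_i H$ with a sufficiently fine time subdivision $0 = t_0 < t_1 < \cdots < t_L = 1$, the concatenation (in a fixed order over $i$ and chronologically over $l$) of the flows of $\rho_i H|_{[t_{l-1},t_l]}$ defines a Hamiltonian isotopy smoothly homotopic rel endpoints in $\Ham(M,\omega)$ to $\{\varphi_H^t\}$, and therefore represents $f$ in $\tHam(M,\omega)$. Each factor lies in $\tHam_{V_i}(M,\omega)$, and conjugating by $h_i^{-1}$ realizes it as a conjugate of an element of $\tHam_U(M,\omega)$.

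For the moreover statement, the key additional tool is that on a closed manifold we may normalize $H$ so that $\int_M H_t\,\omega^n = 0$ for every $t$; this is achieved by subtracting a time-dependent constant, which does not alter the Hamiltonian flow. With this normalization, $\sum_i \int_M \rho_i H_t\,\omega^n = 0$. Fix symmetric bump functions $\chi_{ij}=\chi_{ji}$ supported in the nonempty intersections $V_i \cap V_j$ with $\int_M \chi_{ij}\,\omega^n = 1$, and solve the discrete divergence equation
\[ \sum_j \lambda_{ij}(t) = -\int_M \rho_i H_t\,\omega^n, \qquad \lambda_{ji}=-\lambda_{ij}, \]
whose solvability on the connected nerve graph of the cover is guaranteed by the compatibility condition $\sum_i \int_M \rho_i H_t\,\omega^n = 0$. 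The modified pieces $\widetilde{H}^{(i)} = \rho_i H + \sum_j \lambda_{ij}\chi_{ij}$ are still supported in $V_i$, still satisfy $\sum_i \widetilde{H}^{(i)} = H$, and now have vanishing $M$-integral for every $t$. Running the fragmentation with these pieces yields $f$ as a product of elements of $\tHam_{V_i}(M,\omega)$ whose lifts to $\tHam(V_i,\omega|_{V_i})$ have zero Calabi; since conjugation by symplectic diffeomorphisms preserves the Calabi invariant and $V_i$ embeds into $U$ via $h_i$, each such element becomes a conjugate of an element of $\Ker(\Cal_U)$.

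The main obstacle I anticipate is the passage from the partition-of-unity decomposition at the level of Hamiltonian functions to a genuine product decomposition of $f$ in the universal cover $\tHam(M,\omega)$: equality in $\tHam(M,\omega)$ requires the concatenated isotopy to be smoothly homotopic rel endpoints to $\{\varphi_H^t\}$, not merely to have the same time-one map. This demands an explicit parametric construction of the homotopy, essentially a two-parameter family of Hamiltonians interpolating between $\sum_i \rho_i H$ and its Lie--Trotter-style concatenation, along with careful bookkeeping of the compact-support hypothesis throughout the argument.
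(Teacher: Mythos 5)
The paper itself gives no proof of this lemma: it cites Banyaga's Lemme de fragmentation and observes that, since Banyaga's argument is carried out at the level of isotopies, it applies verbatim to $\tHam(M,\omega)$. Your reconstruction follows the same classical strategy, but its central step is not correct as written. You claim that the chronological concatenation of the flows of $\rho_i H|_{[t_{l-1},t_l]}$ is ``smoothly homotopic rel endpoints'' to $\{\varphi_H^t\}$. For a finite subdivision this concatenation has, in general, a time-one map different from $\varphi_H^1$ (the Lie--Trotter product only converges to $\varphi_H^1$ as the subdivision is refined; it never equals it unless the pieces commute), and a homotopy rel endpoints forces equality of time-one maps. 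So the homotopy you defer to ``careful bookkeeping'' does not exist, and no refinement of the subdivision fixes this: you need an exact algebraic identity in $\tHam(M,\omega)$, not an approximation.

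The device that makes Banyaga's proof work --- and which your outline is missing --- is the exact telescoping decomposition. Set $G_i=\sum_{j\le i}\rho_j H$, so that $\varphi^t_{G_0}=\mathrm{Id}$ and $\varphi^t_{G_N}=\varphi^t_H$; then for every $t$ one has $\varphi^t_H=\bigl[(\varphi^t_{G_0})^{-1}\varphi^t_{G_1}\bigr]\cdots\bigl[(\varphi^t_{G_{N-1}})^{-1}\varphi^t_{G_N}\bigr]$, an identity of isotopies, hence of elements of $\tHam(M,\omega)$. The $i$-th factor is generated by the Hamiltonian $(\rho_i H)_t\circ\varphi^t_{G_{i-1}}$, supported in $(\varphi^t_{G_{i-1}})^{-1}(V_i)$; this support moves with $t$, which is the real reason a preliminary time subdivision is needed (to keep each moving support inside a set that can still be pushed into $U$ by an element of $\Ham(M,\omega)$), not to make a Trotter error small. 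With this correction, your treatment of the ``moreover'' part is essentially sound: normalizing $H$ to have zero mean on closed $M$, transferring the integrals $\int_M\rho_i H_t\,\omega^n$ across overlaps via functions $\chi_{ij}$, and noting that composition with symplectomorphisms preserves the fibrewise integral while conjugation preserves the Calabi invariant, does produce factors lying in conjugates of $p_{U,M}(\Ker(\Cal_U))$; you should, however, also justify that the nerve of the cover can be taken connected (enlarge the cover along chains of balls, using connectedness of $M$) before solving your discrete divergence equation.
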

We note that Banyaga proved a statement similar to Lemma \ref{frag_lemma} on $\Ham(M,\omega)$ (\cite[Lemme de fragmentation (III.3.2)]{Ban}, see also {\cite[Subsection 4.2]{Ban97}}).
His proof works well for Lemma \ref{frag_lemma} similarly because he considered fragmentation for an isotopy in the proof and so we omit the proof of Lemma \ref{frag_lemma}.

%

We use the following theorem.
\begin{thm}\label{symp_perfect}
Let $(\mathbb{B}^{2n}(r),\omega_0)$ be the ball $\{\,x \in \RR^{2n} \mid \|x\|<r\,\}$ of radius $r$ associated with the standard symplectic form $\omega_0$ and $\Cal_B$ the Calabi homomorphism on $\tHam(\mathbb{B}^{2n}(r),\omega_0)$.
Then, $\Ker(\Cal_B)$ is perfect.
\end{thm}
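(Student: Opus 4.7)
The plan is to adapt Banyaga's proof that $\Ker(\Cal_M)$ coincides with the commutator subgroup of $\Ham_c(M,\omega)$ for open symplectic manifolds, combined with a Mather--Thurston infinite-shift trick, to the universal covering of the ball. Given $f \in \Ker(\Cal_B)$, I first apply Lemma \ref{frag_lemma} to a smooth Hamiltonian isotopy representing $f$, writing $f$ as a product of conjugates of elements $f_i$ supported in small pairwise disjoint open balls $U_i \subset \mathbb{B}^{2n}(r)$, each chosen displaceable by some element of $\tHam(\mathbb{B}^{2n}(r),\omega_0)$. Since conjugation preserves both $\Ker(\Cal_B)$ and the commutator subgroup, it suffices to express each fragmented factor as a product of commutators of elements of $\Ker(\Cal_B)$. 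Using that $\Cal_B$ restricts surjectively onto $\RR$ on $\tHam_V$ for any non-empty open ball $V$ disjoint from the $U_i$, I further insert cancelling correction pairs $e_i^{-1}e_i$ with $\Cal(e_i) = -\Cal(f_i)$ and $\sum_i \Cal(e_i) = 0$; after regrouping (using that disjoint supports commute), the task reduces to showing that every $g \in \tHam_W(\mathbb{B}^{2n}(r),\omega_0) \cap \Ker(\Cal_B)$, for $W$ a displaceable open ball, is a commutator of elements of $\Ker(\Cal_B)$.

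For this central step, I would use the Mather--Thurston trick. Choose pairwise disjoint displaced copies $W = W_1, W_2, W_3, \dots$ accumulating at a point $x_0 \in \mathbb{B}^{2n}(r)$, and construct a shift element $\phi \in \tHam(\mathbb{B}^{2n}(r),\omega_0)$ satisfying $\phi(W_i) = W_{i+1}$ and equal to the identity in a neighborhood of $x_0$; by modifying $\phi$ by a Hamiltonian diffeomorphism supported outside $\bigcup_i W_i$ with appropriate Calabi value, we may assume $\phi \in \Ker(\Cal_B)$. Setting $F = \prod_{i=1}^\infty \phi^{i-1} g \phi^{-(i-1)}$ gives a well-defined compactly supported Hamiltonian diffeomorphism (the factors have pairwise disjoint supports accumulating at $x_0$, where they tend to the identity), and $\Cal(F) = \sum_{i=1}^\infty \Cal(g) = 0$, so $F \in \Ker(\Cal_B)$. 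A telescoping computation then yields $F \cdot \phi F^{-1}\phi^{-1} = g$, i.e.\ $g = [F, \phi]$ in $\Ham(\mathbb{B}^{2n}(r),\omega_0)$.

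The main obstacle will be lifting the commutator identity $g = [F, \phi]$ from $\Ham(\mathbb{B}^{2n}(r),\omega_0)$ to $\tHam(\mathbb{B}^{2n}(r),\omega_0)$: the equality a priori holds in $\tHam$ only up to a potential element $\eta \in \pi_1(\Ham(\mathbb{B}^{2n}(r), \omega_0))$, which lies in $\Ker(\Cal_B)$ because the ball is exact and $\Cal_B$ descends from $\tHam$ to $\Ham$. To resolve this, either one arranges the isotopies defining $F$ and $\phi$ carefully enough that $[F, \phi] = g$ exactly in $\tHam$ (using the contractibility of $\mathbb{B}^{2n}(r)$ to build explicit smooth homotopies of compactly supported isotopies), or one iterates the Mather--Thurston construction to express the residual element $\eta$ itself as a product of commutators of elements of $\Ker(\Cal_B)$. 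Either resolution establishes that every element of $\Ker(\Cal_B)$ is a product of commutators of elements of $\Ker(\Cal_B)$, completing the proof of perfectness.
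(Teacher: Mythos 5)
There is a fatal gap at the heart of your argument: the Mather--Thurston infinite-shift trick is simply not available in the Hamiltonian category. Your shift $\phi$ must be a symplectomorphism (indeed an element of $\tHam(\mathbb{B}^{2n}(r),\omega_0)$), hence volume-preserving, so the copies $W_i=\phi^{i-1}(W)$ all have the same positive volume as $W$. Infinitely many pairwise disjoint open sets of a fixed positive volume cannot fit inside the finite-volume ball $\mathbb{B}^{2n}(r)$, and a fortiori cannot accumulate at a point $x_0$; and you cannot let the $W_i$ shrink either, since $\phi(W_i)=W_{i+1}$ with $\phi$ symplectic forces equal volumes. Replacing $\phi$ by a non-symplectic contraction destroys the property that the conjugates $\phi^{i-1}g\phi^{-(i-1)}$ and the infinite product $F$ lie in $\Ham$, let alone in $\Ker(\Cal_B)$. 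This volume obstruction is exactly the reason why perfectness of Hamiltonian groups requires Banyaga's genuinely different (and much harder) arguments rather than the swindle that works for $\mathrm{Diff}^c_0$.

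Even if the swindle were somehow repaired, your treatment of the lift to the universal covering is not sound as stated: contractibility of the ball $\mathbb{B}^{2n}(r)$ as a manifold gives no control on the topology of the group $\Ham(\mathbb{B}^{2n}(r),\omega_0)$, whose homotopy type (even its $\pi_1$) is unknown for $n\ge 3$ (contractibility is known only in low dimensions by Gromov), and ``iterating'' the construction on the residual loop $\eta\in\pi_1(\Ham(\mathbb{B}^{2n}(r),\omega_0))$ produces a new residual element at each stage with no termination or convergence argument. The paper avoids all of this: it quotes Banyaga's Th\'eor\`eme II.6.1, which proves that the universal covering $\tKer(\bCal_B)$ of $\Ker(\bCal_B)$ is perfect, and then uses Lemma \ref{univ_ker_cal_lem}, which identifies $\Ker(\Cal_B)$ with the image of $\tKer(\bCal_B)$ in $\tHam(\mathbb{B}^{2n}(r),\omega_0)$; perfectness then passes to this homomorphic image, with no need to analyze $\pi_1(\Ham(\mathbb{B}^{2n}(r),\omega_0))$ at all. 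If you want a self-contained proof, you would have to reproduce an argument of Banyaga's type at the level of isotopies, not the shift trick.
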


The authors know no references explaining Theorem \ref{symp_perfect} explicitly, so we provide its proof.
In order to prove Theorem \ref{symp_perfect}, we use the following lemma.

\begin{lem}\label{univ_ker_cal_lem}
Let $(M,\omega)$ be an open symplectic manifold and $\Cal_M$ be the Calabi homomorphism on $\tHam(M,\omega)$.
Let $\tilde{\iota} \colon \tKer(\bCal_M) \to \tHam(M,\omega)$ be the homomorphism induced from the inclusion $\iota \colon \Ker(\bCal_M) \to \Ham(M,\omega)$, where $\tKer(\bCal_M)$ is the universal covering of $\Ker(\bCal_M)$.
Then, $\mathrm{Im}\tilde\iota=\Ker(\Cal_M)$.
\end{lem}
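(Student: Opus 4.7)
The plan is to prove the two inclusions $\Image(\tilde\iota) \subseteq \Ker(\Cal_M)$ and $\Ker(\Cal_M) \subseteq \Image(\tilde\iota)$ separately.

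For $\Image(\tilde\iota) \subseteq \Ker(\Cal_M)$, I would take $\tilde g \in \tKer(\bCal_M)$ represented by a smooth isotopy $\{g_t\}$ in $\Ker(\bCal_M)$ generated by a Hamiltonian $H_t$, and consider the continuous function $c\colon[0,1]\to\RR$ defined by $c(t)=\int_0^t\int_M H_s\,\omega^n\,ds$, which coincides with $\Cal_M$ evaluated on the sub-isotopy up to time $t$. Then $c(0)=0$, and the hypothesis $g_t\in\Ker(\bCal_M)$ translates to $c(t)\in\Lambda_\omega$ for every $t$. Since $\Lambda_\omega$ is countable, it is a proper subgroup of $\RR$ and hence has empty interior, so its only connected subsets are singletons. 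The connected image $c([0,1])$ therefore equals $\{0\}$, yielding $\Cal_M(\tilde\iota(\tilde g))=c(1)=0$.

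For the reverse inclusion $\Ker(\Cal_M)\subseteq\Image(\tilde\iota)$, starting from $\tilde g\in\Ker(\Cal_M)$ represented by $\{g_t\}$ generated by $H_t$ (so $c(0)=c(1)=0$ with $c$ as above), the strategy is to modify $\{g_t\}$ by multiplication with a contractible loop in $\Ham(M,\omega)$ so that the modified isotopy lies in $\Ker(\bCal_M)$ and has the same class in $\tHam(M,\omega)$. Using that $M$ is open, I would choose a bump function $\rho$ supported in a ball $B\subset M$ with $\int_M\rho\,\omega^n=1$, and set $K_t=-c'(t)\rho$. The crucial observation is that since $K_t$ is at every time a scalar multiple of the single function $\rho$, the Hamiltonian flow $\phi_t=\varphi_K^t$ is a reparametrization of the autonomous flow of $X_\rho$; explicitly $\phi_t=\Phi_{X_\rho}^{-c(t)}$. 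In particular $\phi_0=\phi_1=\Id$, making $\phi$ a loop in $\Ham_B(M,\omega)$, and the family $\phi_{s,t}=\Phi_{X_\rho}^{-s\,c(t)}$ contracts it rel endpoints to the constant loop; hence $\tilde\phi_1=\id$ in $\tHam(M,\omega)$, while $\Cal_M(\tilde\phi_t)=-c(t)$. Defining $g'_t=g_t\phi_t$, the additivity of $\Cal_M$ on $\tHam(M,\omega)$ yields $\Cal_M(\widetilde{g'_t})=c(t)-c(t)=0$ for every $t$, so $\{g'_t\}$ remains in $\Ker(\bCal_M)$, and the smooth map $(s,t)\mapsto g_t\phi_{s,t}$ is a homotopy rel endpoints in $\Ham(M,\omega)$ from $\{g_t\}$ to $\{g'_t\}$, giving $\tilde\iota([\{g'_t\}])=\tilde g$.

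The main obstacle is producing the correcting loop $\{\phi_t\}$ in the reverse inclusion: it must be simultaneously a smooth loop in $\Ham(M,\omega)$, contractible so that its lift to $\tHam(M,\omega)$ is trivial, and have Calabi profile $-c(t)$ along the path. The resolution exploits that a time-dependent Hamiltonian of the form $f(t)\rho$ generates a mere reparametrization of a single autonomous flow, so both the loop condition at $t=1$ and the explicit contraction come for free once $c(0)=c(1)=0$. The forward inclusion is essentially a topological observation about the absence of nondegenerate connected subsets in proper subgroups of $\RR$.
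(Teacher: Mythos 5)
Your proposal is correct and follows essentially the same route as the paper: your key step, correcting the isotopy $\{g_t\}$ by the reparametrized autonomous flow at time $-c(t)$ of a fixed unit-Calabi profile function and contracting it via the parameter $s\mapsto -s\,c(t)$, is exactly the paper's construction $\psi^t=\varphi_H^t\circ\varphi_F^{-\int_0^t(\int_M H_\tau\omega^n)d\tau}$ with its linear homotopy $\Phi(s)$. For the easy inclusion $\Image\tilde\iota\subseteq\Ker(\Cal_M)$ you are more explicit than the paper (which only appeals to the choice of base point); note, however, that what your argument really needs is only that $\Lambda_\omega$ is a proper subgroup of $\RR$ (hence contains no nondegenerate interval), a fact implicitly assumed throughout the paper's framework, rather than the stronger countability of $\Lambda_\omega$ that you assert without justification.
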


\begin{proof}
Recall that we took the identity as a base point when we consider the universal covering of transformation group.
Thus, we see that $\mathrm{Im}\tilde\iota \subset \Ker(\Cal_M)$.
We will prove $\Ker(\Cal_M)\subset\mathrm{Im}\tilde\iota$.
Take $h \in \Ker(\Cal_M)$.
Then, there exists a Hamiltonian function $H\colon [0,1] \times M \to \RR$ with compact support such that the Hamiltonian isotopy $\{\varphi_H^t\}_{t\in[0,1]}$ represents $h$.
Take a function $F\colon M \to \RR$ such that $\Cal_M(\left[\{\varphi_F^t\}_{t\in[0,1]}\right])=1$.
Then, we define
the Hamiltonian isotopy $\{\psi^t\}_{t\in[0,1]}$ by
\[\psi^t=\varphi_H^t \circ \varphi_F^{-\int_0^t(\int_MH_\tau\omega^n)d\tau} \text{ for every }t\in[0,1].\]
Then $\{\psi^t\}_{t\in[0,1]}$
 represents $h$ in $\tHam(M,\omega)$ because
 \[\Phi(s)=\left\{\varphi_H^t \circ \varphi_F^{-s\int_0^t(\int_MH_\tau\omega^n)d\tau}\right\}_{t\in[0,1]} (s \in [0,1])
 \] is a homotopy between $\{\varphi_H^t\}_{t\in[0,1]}$ and $\{\psi^t\}_{t\in[0,1]}$.

By $\Cal_M(\left[\{\varphi_F^t\}_{t\in[0,1]}\right])=1$, we have $\bCal_M \left(\varphi_F^a\right) = \bCal_M \left(\varphi_{aF}^1\right) =[a] \in \RR/\Lambda_\omega$ for every $a \in \RR$.
Hence, we have
\begin{align*}
\bCal_M\left(\psi^t\right) & =
\bCal_M\left(\varphi_H^t \circ \varphi_F^{-\int_0^t (\int_MH_\tau\omega^n)d\tau}\right) \\ 
& = \bCal_M\left(\varphi^t_H \right)+\bCal_M \left(\varphi_F^{-\int_0^t (\int_MH_\tau\omega^n)d\tau}\right) \\
& =  \left[\int_0^t\left(\int_MH_\tau\omega^n\right)d\tau-\int_0^t\left(\int_MH_\tau\omega^n\right)d\tau\right]\\
& =[0] \in \RR/\Lambda_\omega
\end{align*}
for every $t\in[0,1]$.
Therefore, the isotopy $\{\psi^t\}_{t\in[0,1]}$ represents a path in $\Ker(\bCal_M)$.
Hence, we have $h \in \mathrm{Im}\tilde\iota$.
\end{proof}


\begin{proof}[Proof of Theorem \ref{symp_perfect}]
In the proof of \cite[Th\'{e}or\`{e}me II.6.1]{Ban} (see also \cite[Subsection 4.5]{Ban97}), Banyaga proved that $\tKer(\bCal_B)$ is perfect.
Hence, by
Lemma \ref{univ_ker_cal_lem}, $\Ker(\Cal_B)$ is perfect.
\end{proof}

\begin{proof}[Proof of Theorem \ref{closed_rel_simpl}]
Take $f \in G$ and $g \in G \setminus N$.
By Lemma \ref{rel simpl equiv}, it is sufficient to prove that $f$ is represented as a product of conjugates of $g$ or $g^{-1}$.

Since $g\notin\pi_1(\Ham(M,\omega))$, there exists an open subset $U$ of $M$ such that $g$ displaces $U$ and $(U,\omega|_U)$ is symplectomorphic to a ball with the standard symplectic form.
Then, by Lemma \ref{frag_lemma}, $f$ can be written as a product of conjugates of elements of $p_{U,M}\left(\Ker(\Cal_U)\right)$.
By Theorem \ref{symp_perfect}, $p_{U,M}\left(\Ker(\Cal_U)\right)$ is a perfect group.
Thus,
since $g$ displaces $U$, Lemma \ref{disp_lem} implies that $f$ is represented as a product of conjugates of $g$ or $g^{-1}$.
\end{proof}

In order to prove Theorem \ref{open_rel_simpl}, we use the following lemma.

\begin{lem}[Fragmentation lemma for Theorem \ref{open_rel_simpl}.]\label{frag_lemma_open}
Let $(M,\omega)$ be a connected open symplectic manifold and $U$ a non-empty open subset of $M$.
Then, for every $f \in \Ker(\Cal_M)$, there exist $f_1,f_2\ldots,f_k \in p_{U,M}\left(\Ker(\Cal_U)\right)$ and $h_1,h_2\ldots,h_k \in \Ker(\Cal_M)$ such that
\[f = h_1^{-1}f_1h_1 \cdot h_2^{-1}f_2h_2 \cdots h_k^{-1}f_kh_k.\]
%
%
\end{lem}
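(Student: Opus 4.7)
The plan is to reduce to the standard fragmentation lemma (Lemma \ref{frag_lemma}) and then modify the resulting factorization so that both the pieces and the conjugators lie in $\Ker(\Cal_M)$. The overall strategy parallels Banyaga's argument for proving simplicity of $\Ker(\bCal_M) \subset \Ham(M,\omega)$ in the non-covering setting (cf.\ \cite[Subsection 4.2]{Ban97}), adapted to the universal covering.

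We may assume $U \subsetneq M$, since the case $U = M$ is immediate. Because $M$ is open and connected, we can choose a non-empty open $W \subset M$ with $\overline{W} \cap \overline{U} = \emptyset$; then the Calabi homomorphism restricted to $\tHam_W(M,\omega)$ is surjective onto $\RR$. Applying Lemma \ref{frag_lemma} to $f$ yields
\[
f = \prod_{i=1}^k g_i^{-1} F_i g_i, \qquad F_i \in \tHam_U(M,\omega),\ g_i \in \tHam(M,\omega),
\]
and conjugation-invariance of $\Cal_M$ gives $\sum_i \Cal_M(F_i) = \Cal_M(f) = 0$. Set $c_i := \Cal_M(F_i)$.

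Next, correct the conjugators. For each $i$ pick $b_i \in \tHam_W(M,\omega)$ with $\Cal_M(b_i) = -\Cal_M(g_i)$ and set $h_i := b_i g_i \in \Ker(\Cal_M)$. Since $b_i$ has support in $W$, disjoint from a support of $F_i$ contained in $\overline U$, the isotopies representing $b_i$ and $F_i$ may be chosen with disjoint supports, so $b_i$ and $F_i$ commute in $\tHam(M,\omega)$; therefore $h_i^{-1} F_i h_i = g_i^{-1} F_i g_i$, and
\[
f = \prod_{i=1}^k h_i^{-1} F_i h_i, \qquad h_i \in \Ker(\Cal_M).
\]

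It remains to correct the pieces $F_i$ to lie in $p_{U,M}(\Ker(\Cal_U))$. Fix a compactly supported $K\colon M \to \RR$ with $\supp K \subset U$ and $\int_M K \omega^n = 1$, and set $\alpha_c := [\{\varphi_K^{ct}\}_{t \in [0,1]}] \in \tHam_U(M,\omega)$ for $c \in \RR$. Since the flow of $K$ is a one-parameter group, the map $c \mapsto \alpha_c$ is a group homomorphism $\RR \to \tHam_U(M,\omega)$ satisfying $\Cal_M(\alpha_c) = c$. Writing $F_i = F_i' \alpha_{c_i}$ with $F_i' := F_i \alpha_{-c_i} \in p_{U,M}(\Ker(\Cal_U))$, we get
\[
f = \prod_{i=1}^k (h_i^{-1} F_i' h_i)(h_i^{-1} \alpha_{c_i} h_i).
\]
The factors $h_i^{-1} F_i' h_i$ already satisfy the conclusion, so what remains is to rewrite the residual product $\prod_i h_i^{-1} \alpha_{c_i} h_i \in \Ker(\Cal_M)$ (residing in $\Ker(\Cal_M)$ because $\sum c_i = 0$) as a product of conjugates of elements of $p_{U,M}(\Ker(\Cal_U))$ by elements of $\Ker(\Cal_M)$. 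This is the main obstacle. My approach is to invoke the identity $h_i^{-1} \alpha_{c_i} h_i = \alpha_{c_i}[\alpha_{-c_i}, h_i^{-1}]$ and the commutativity of $\{\alpha_c\}$ to rearrange the residual into a product of commutators $[\alpha_{-c_i}, h_i^{-1}]$ (with the pure $\alpha$-factors telescoping to the identity via $\sum c_i = 0$). Each such commutator can then be handled by choosing $T \in \Ker(\Cal_M)$ that displaces $\overline U$ (which exists: $\overline U$ is compact in the open manifold $M$, hence displaceable, and its displacing element's Calabi can be neutralized by an element of $\tHam_W$ as above) and applying the displacement trick of Lemma \ref{disp_lem} to express the commutator as a product of symmetrized conjugates of elements that, after an additional adjustment by suitable $\alpha_c$-factors supported in $U$, lie in $p_{U,M}(\Ker(\Cal_U))$. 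The careful bookkeeping in this last step---controlling both the supports and the Calabi values simultaneously---is the technical heart of the proof.
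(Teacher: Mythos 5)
The paper itself omits the proof of this lemma, referring to Banyaga's fragmentation argument \cite[Subsection 4.2]{Ban97} carried out at the level of isotopies, so your proposal is judged on its own merits. Your first two steps (fragmenting via Lemma \ref{frag_lemma}, then repairing the conjugators by factors supported away from $U$ so that $h_i\in\Ker(\Cal_M)$) are reasonable, but the step you yourself call ``the technical heart'' is precisely the content of the lemma, and what you sketch for it does not go through as stated. First, the factors $h_i^{-1}\alpha_{c_i}h_i$ are interleaved with the good factors $h_i^{-1}F_i'h_i$, so ``the residual product $\prod_i h_i^{-1}\alpha_{c_i}h_i$'' is not a factor of $f$; to collect these terms into one block you must conjugate the intervening good factors by elements whose Calabi invariant is $c_i\neq 0$, which destroys exactly the extra requirement (conjugators lying in $\Ker(\Cal_M)$) that distinguishes this lemma from Lemma \ref{frag_lemma}. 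Second, each residual factor has $\Cal_M=c_i\neq 0$, so no individual factor can be a product of conjugates of elements of $p_{U,M}(\Ker(\Cal_U))$, and the commutators $[\alpha_{-c_i},h_i^{-1}]$ are not covered by Lemma \ref{disp_lem}: that lemma requires \emph{both} entries of the commutator to lie in $\tHam_U(M,\omega)$, whereas $h_i^{-1}$ is an arbitrary element of $\Ker(\Cal_M)$. What is actually needed is the Banyaga-type correction which transports the Calabi defects $c_i$ from piece to piece (a telescoping correction along a chain of balls, using connectedness of $M$) while keeping both the supports of the pieces and the Calabi values of the conjugators under control; this is a missing idea, not bookkeeping.

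Two auxiliary claims are also false as stated. For an arbitrary non-empty open $U$ there need not exist a non-empty open $W$ with $\overline{W}\cap\overline{U}=\emptyset$ (take $U$ dense in $M$), and $\overline{U}$ need be neither compact nor displaceable (take $U$ a neighbourhood of the zero section in $T^*S^1$), so ``compact in the open manifold $M$, hence displaceable'' is not a valid justification. Both points can be repaired by first replacing $U$ by a small Darboux ball $U'$ with compact closure contained in $U$, noting that $p_{U',M}(\Ker(\Cal_{U'}))\subset p_{U,M}(\Ker(\Cal_U))$, after which $W$ exists and $U'$ is displaceable inside its Darboux chart by an element whose Calabi invariant you can neutralize as you describe; but this reduction has to be stated, and even with it the decisive correction step of the previous paragraph remains to be supplied.
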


We omit the proof of Lemma \ref{frag_lemma_open} because its proof is similar to {\cite[Subsection 4.2]{Ban97}}.

The proof of Theorem \ref{open_rel_simpl} is almost same as the one of Theorem \ref{closed_rel_simpl} if one use Lemma \ref{frag_lemma_open} instead of Lemma \ref{frag_lemma}.
Thus we omit the proof.


\section{$L^p$-Hofer geometry on the universal covering}\label{lphofer}

In this section, we prove Theorems \ref{closed_lphofer} and \ref{open_lphofer}.
 Recall that the conjugation-invariant norm $\tilde{\nu}_p \colon \tHam(M,\omega) \to\RR$ is defined by
\[\tilde{\nu}_p(f) = \tilde{\rho}_p(\Id,f).\]
Since $\tilde{\nu}_p$ is a conjugation-invariant norm, the set
\[
    N_{\tilde{\nu}_p} = \left\{\, f\in \tHam(M,\omega) \relmiddle| \tilde{\nu}_p(f)=0 \,\right\}
\]
is a normal subgroup of $\tHam(M,\omega)$.

Using Eliashberg--Polterovich's beautiful argument in \cite[Corollary 1.3.B]{EP93}
(see also \cite[Theorem 2.3.A]{P01}),
we have the following lemma.

\begin{lem}\label{EP_lemma}
Let $(M,\omega)$ be a symplectic manifold.
Let $p\geq 1$ be an integer.
Then
\[
    N_{\tilde{\nu}_p} \setminus \pi_1(\Ham(M,\omega)) \neq\emptyset.
\]
Moreover, if $M$ is open, then
\[
    N_{\tilde{\nu}_p}\cap\bigl(\Ker(\Cal_M)\setminus \pi_1(\Ham(M,\omega))\bigr)\neq\emptyset.
\]
\end{lem}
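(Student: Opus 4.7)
The plan is to invoke Eliashberg and Polterovich's construction of $L^p$-small Hamiltonian paths \cite[Corollary 1.3.B]{EP93} (see also \cite[Theorem 2.3.A]{P01}), executed inside a single contractible subgroup so that all of the resulting Hamiltonian isotopies represent one prescribed homotopy class in $\tHam(M,\omega)$.

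Fix a Darboux chart identifying a ball $B' \subset M$ symplectomorphically with $(\mathbb{B}^{2n}(R), \omega_0)$. Inside $B'$, pick a strictly smaller ball $B$ together with $h \in \Ham(B', \omega_0)$ displacing $B$, that is, $h(B) \cap \overline{B} = \emptyset$. Choose a non-trivial $\phi \in \Ham(B, \omega_0)$; for the second assertion additionally take $\phi$ to be a non-trivial commutator in $\Ham(B, \omega_0)$, which forces $\Cal_B(\phi) = 0$. Since $\Ham(B', \omega_0)$ is contractible (classical Alexander-type isotopy), the inclusion $\Ham(B', \omega_0) \hookrightarrow \Ham(M,\omega)$ lifts uniquely through $\tHam(M,\omega) \to \Ham(M,\omega)$, producing a canonical $\tilde{\phi} \in \tHam(M,\omega)$ covering $\phi$. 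Non-triviality of $\phi$ gives $\tilde{\phi} \notin \pi_1(\Ham(M,\omega))$, and for open $M$ the factorization of the Calabi homomorphism through $\Ham(B')$ yields $\Cal_M(\tilde{\phi}) = \Cal_B(\phi) = 0$.

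To see $\tilde{\nu}_p(\tilde{\phi}) = 0$, apply the Eliashberg--Polterovich argument to the pair $(\phi, h)$ inside $B'$: for every $\epsilon > 0$ their construction provides a compactly supported Hamiltonian $K^{\epsilon} \colon [0,1] \times M \to \RR$ with support contained in $B'$, satisfying $\varphi_{K^{\epsilon}}^1 = \phi$ and $\int_0^1 \|K^{\epsilon}_t\|_{L^p}\, dt < \epsilon$. The isotopy $\{\varphi_{K^{\epsilon}}^t\}_{t \in [0,1]}$ stays inside the contractible group $\Ham(B', \omega_0)$, so its homotopy class in $\tHam(M,\omega)$ is forced to be $\tilde{\phi}$. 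Hence $\tilde{\nu}_p(\tilde{\phi}) < \epsilon$ for every $\epsilon > 0$, and both assertions of the lemma follow.

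The main obstacle is the bookkeeping of homotopy classes: the Eliashberg--Polterovich statement naturally bounds only the downstairs pseudo-metric $\rho_p$ on $\Ham(M,\omega)$, and a priori the various $L^p$-small isotopies realising $\phi$ could land in different pre-images of $\phi$ under $\tHam(M,\omega) \to \Ham(M,\omega)$, in which case no single lift of $\phi$ would have vanishing $\tilde{\nu}_p$. Localising the entire construction inside a single Darboux ball $B'$, whose compactly supported Hamiltonian diffeomorphism group is contractible, collapses the homotopy-class ambiguity to the unique canonical lift of $\phi$ and lets the $L^p$-bound pass unchanged from $\rho_p$ downstairs to $\tilde{\nu}_p$ upstairs.
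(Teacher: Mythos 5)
Your key step is the claim that $\Ham(B',\omega_0)$ (compactly supported Hamiltonian diffeomorphisms of a Darboux ball) is contractible ``by a classical Alexander-type isotopy''. There is no smooth, let alone symplectic, Alexander trick: contractibility of the compactly supported symplectomorphism/Hamiltonian group of the standard ball is known only in dimensions two and four (the four-dimensional case being a deep theorem of Gromov), and for $2n\ge 6$ even its connectivity is a well-known open problem. Since the lemma is asserted for every symplectic manifold, you cannot invoke this; and it is exactly what your argument needs, because without knowing that loops in $\Ham(B',\omega_0)$ become null-homotopic in $\Ham(M,\omega)$ (full contractibility is more than needed, but even this weaker statement is not known), the $L^p$-short isotopies you produce for different $\epsilon$ may represent different preimages of $\phi$ in $\tHam(M,\omega)$, so no single class is shown to satisfy $\tilde{\nu}_p=0$ --- which is precisely the ``main obstacle'' you identified but did not actually resolve. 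There is also a secondary problem in your first case: for an arbitrary non-trivial $\phi\in\Ham(B,\omega_0)$ there are in general no $L^p$-short generating Hamiltonians supported in $B'$ at all, since by Eliashberg--Polterovich's theorem for open manifolds one has, inside $\Ham(B',\omega_0)$, $\rho_p(\Id,\phi)=\mathrm{const}\cdot\lvert \Cal_{B'}(\phi)\rvert$; so you must at least choose $\phi$ with vanishing Calabi invariant (e.g.\ a commutator), as you do only in the open case.

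The standard repair --- and the argument the paper intends when it says the Eliashberg--Polterovich proof ``works well'' upstairs --- avoids the topology of $\Ham(B',\omega_0)$ entirely by running the displacement--commutator trick directly in $\tHam(M,\omega)$. Choose lifts $\tilde{f},\tilde{g}\in\tHam_B(M,\omega)$ given by isotopies supported in a small ball $B$ with $[\pi(\tilde{f}),\pi(\tilde{g})]\neq\Id$, and for each $\epsilon>0$ an element $\tilde{h}_\epsilon\in\tHam(M,\omega)$ displacing $B$ whose defining isotopy has $L^p$-length less than $\epsilon$ (this is the Eliashberg--Polterovich construction: push $B$ by a symplectomorphism into a thin set of the same volume and displace that cheaply; $\tilde{\nu}_p(\tilde{h}_\epsilon)$ is bounded by the length of this specific isotopy, so no lifting ambiguity arises). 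Lemma \ref{disp_lem} then expresses $[\tilde{f},\tilde{g}]$ as a product of four conjugates of $\tilde{h}_\epsilon^{\pm1}$ in $\tHam(M,\omega)$, whence $\tilde{\nu}_p([\tilde{f},\tilde{g}])\le 4\epsilon$ for every $\epsilon$, while $[\tilde{f},\tilde{g}]\notin\pi_1(\Ham(M,\omega))$ because its image in $\Ham(M,\omega)$ is a non-trivial commutator, and $\Cal_M([\tilde{f},\tilde{g}])=0$, which gives the open case as well. As written, your proposal instead rests on an open problem, so it has a genuine gap.
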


We note that Eliashberg and Polterovich proved a statement similar to Lemma \ref{EP_lemma} on $\Ham(M,\omega) \setminus \{\Id\}$.
However, their proof works well for Lemma \ref{EP_lemma} similarly and so we omit the proof of Lemma \ref{EP_lemma}.
Now combining Lemma \ref{EP_lemma} with Theorem \ref{closed_rel_simpl} yields Theorem \ref{closed_lphofer}.

\begin{proof}[Proof of Theorem \ref{closed_lphofer}]
By Theorem \ref{closed_rel_simpl}, $\tHam(M,\omega)$ is simple relative to $\pi_1(\Ham(M,\omega))$.
Since $N_{\tilde{\nu}_p}$ is a normal subgroup of $\tHam(M,\omega)$, Lemma \ref{EP_lemma} yields that
$N_{\tilde{\nu}_p} = \tHam(M,\omega)$.
It means that every $f\in\tHam(M,\omega)$ satisfies $\tilde{\nu}_p(f)=0$.
\end{proof}

To show Theorem \ref{open_lphofer}, we define the notion of \textit{relative Calabian} groups,
which generalizes Calabian groups introduced by Eliashberg and Polterovich \cite[Section 3.1]{EP93}.

\begin{definition}
    Let $G$ be a topological group and $N$ a normal subgroup of $G$.
    The group $G$ is called \textit{Calabian relative to $N$} if it satisfies the following conditions:
    \begin{itemize}
        \item[(C1)] There exists a continuous surjective homomorphism $c\colon G\to\RR$.
        \item[(C2)] Every normal subgroup $S$ of $G$ satisfying $S\cap (\Ker{(c)}\setminus N)\neq \emptyset$ contains the kernel of $c$.
        \item[(C3)] There exists a continuous path $\gamma\colon\RR\to G$ such that $c\bigl(\gamma(t)\bigr)=t$ for every $t\in\RR$.
    \end{itemize}
\end{definition}

We note that a topological group $G$ is Calabian in the sense of Eliashberg--Polterovich if and only if it is Calabian relative to the trivial subgroup $\{1\}$.

\begin{example}\label{example:Relative_Calabian}
Let $(M,\omega)$ be an open symplectic manifold.
We equip the group $\Ham(M,\omega)$ of Hamiltonian diffeomorphisms with the topology induced by the Hofer metric $\rho_{\infty}$ (recall Section \ref{sec:Hofer_metric}).
Then the universal covering (in the sense of Section \ref{def of univ cov}) $\tHam(M,\omega)$ is Calabian relative to $\pi_1(\Ham(M,\omega))$ with continuous surjective homomorphism $\Cal\colon\tHam(M,\omega)\to\RR$.
Indeed, let $S$ be a normal subgroup of $\tHam(M,\omega)$ such that
\[
    S\cap \bigl(\Ker(\Cal_M)\setminus \pi_1(\Ham(M,\omega))\bigr)\neq \emptyset.
\]
Since $\Ker(\Cal_M)$ is simple relative to $\Ker(\Cal_M)\cap \pi_1(\Ham(M,\omega))$ by Theorem \ref{open_rel_simpl},
the normal subgroup $S\cap\Ker(\Cal_M)$ coincides with $\Ker(\Cal_M)$.
It means that $S\supset\Ker(\Cal_M)$.
\end{example}

Let $G$ be a topological group and $\rho$ a pseudo-metric on $G$.
We assume that $\rho$ is \textit{intrinsic} in the sense of Eliashberg--Polterovich \cite[Section 3.2]{EP93}.
Roughly speaking, this assumption says that the distance $\rho(p,q)$ of given two points $p,q$ in $G$ is measured by the infimum of all \textit{lengths} of (rectifiable) paths joining $p$ and $q$.
We define a pseudo-norm $\nu$ on $G$ by $\nu(g)=\rho(1,g)$ for every $g\in G$.
If the pseudo-metric $\rho$ is bi-invariant, then the pseudo-norm $\nu$ is conjugation-invariant and hence the set
\[
    N_\nu = \{\, g\in G \mid\, \nu(g)=0 \}
\]
is a normal subgroup of $G$.
The following theorem is an analogue of \cite[Theorem 3.3.A]{EP93} in the context of relative Calabian groups.

\begin{theorem}\label{thm:EP_Theorem_3.3.A}
    Let $G$ be a topological group which is Calabian relative to a normal subgroup $N$ of $G$.
    Let $\rho$ be a continuous, bi-invariant, and intrinsic pseudo-metric on $G$.
    Assume that $N_{\nu}\cap (\Ker{(c)}\setminus N)\neq\emptyset$.
    Then there exists a positive constant $\lambda$ such that
    \[
        \rho(1,g) = \lambda\lvert c(g)\rvert
    \]
    for every $g\in G$.
\end{theorem}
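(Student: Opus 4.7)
The plan is to first use the relative Calabian property to show that $\nu$ descends to a function $\bar\nu$ on $\RR$ via the homomorphism $c$, and then to use the path $\gamma$ from $(C3)$ together with the intrinsic property of $\rho$ to prove that this induced function is linear in $|t|$.

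First, since $\rho$ is bi-invariant, the pseudo-norm $\nu$ is conjugation-invariant, so $N_\nu$ is a normal subgroup of $G$. The hypothesis $N_\nu \cap (\Ker(c)\setminus N) \neq \emptyset$ together with condition $(C2)$ forces $\Ker(c) \subset N_\nu$. Bi-invariance then gives $\nu(gk) = \nu(g)$ for every $g \in G$ and $k \in \Ker(c)$, so $\nu$ factors through $c$ and yields a function $\bar\nu \colon \RR \to \RR_{\geq 0}$ satisfying $\nu(g) = \bar\nu(c(g))$. One immediately checks that $\bar\nu$ is symmetric ($\bar\nu(-t) = \bar\nu(t)$), subadditive, and satisfies $\bar\nu(0) = 0$; continuity of $\bar\nu$ follows from the continuity of $\rho$ and the existence of the continuous section $\gamma$ from $(C3)$, since $\bar\nu(t) = \rho(1, \gamma(t))$.

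The heart of the argument is to establish $\bar\nu(nt) = n\,\bar\nu(t)$ for every $n \in \NN$ and every $t > 0$; subadditivity already gives $\bar\nu(nt) \leq n\,\bar\nu(t)$. For the reverse inequality, fix $\epsilon > 0$. By the intrinsic property, there exists a continuous path $\alpha \colon [0,1] \to G$ from $1$ to $\gamma(nt)$ with length $L(\alpha) \leq \bar\nu(nt) + \epsilon$. Since $c \circ \alpha \colon [0,1] \to \RR$ is continuous with $c(\alpha(0)) = 0$ and $c(\alpha(1)) = nt$, the intermediate value theorem allows us to pick $0 = s_0 < s_1 < \cdots < s_n \leq 1$ with $c(\alpha(s_i)) = it$; concretely, define $s_i = \inf\{\,s > s_{i-1} \mid c(\alpha(s)) = it\,\}$ and verify that this is well-defined and strictly greater than $s_{i-1}$ by continuity. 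Using additivity of length over concatenations, the standard bound $L(\alpha|_{[s_{i-1}, s_i]}) \geq \rho(\alpha(s_{i-1}), \alpha(s_i))$, and the bi-invariance identity $\rho(\alpha(s_{i-1}), \alpha(s_i)) = \bar\nu(c(\alpha(s_i)) - c(\alpha(s_{i-1}))) = \bar\nu(t)$, we conclude $\bar\nu(nt) + \epsilon \geq L(\alpha) \geq n\,\bar\nu(t)$. Letting $\epsilon \to 0$ gives the desired equality. From this, $\bar\nu(q) = q\,\bar\nu(1)$ for every positive rational $q$ follows in the usual way, and continuity of $\bar\nu$ extends this to all $t \geq 0$, giving $\bar\nu(t) = \lambda t$ with $\lambda = \bar\nu(1)$; symmetry yields $\bar\nu(t) = \lambda |t|$ on all of $\RR$. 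Positivity of $\lambda$ is then equivalent to $\rho$ being non-trivial.

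I expect the main obstacle to be the careful choice of the splitting parameters $s_i$, since $c \circ \alpha$ need not be monotone: the path $\alpha$ may meander in $c$-value before terminating at $\gamma(nt)$, so one cannot simply cut $\alpha$ "at equal $c$-increments" in a naive way. Taking $s_i$ to be an appropriate first-hitting-time above $s_{i-1}$ requires verifying that each level set $\{s \in [s_{i-1},1] \mid c(\alpha(s)) = it\}$ is closed and non-empty, which follows from continuity and the intermediate value theorem applied on $[s_{i-1}, 1]$. Once this splitting is secured, the additivity of length over concatenations and the bi-invariance identity combine cleanly to yield the super-additivity $\bar\nu(nt) \geq n\,\bar\nu(t)$ that complements the subadditivity from the pseudo-norm axioms.
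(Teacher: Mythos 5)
Your proposal is correct and takes essentially the same approach as the paper: the paper likewise uses (C2) to conclude $\Ker(c)\subset N_{\nu}$ and then cites \cite[Lemmas 3.3.B--3.3.E]{EP93}, which is precisely the argument you have written out (descend $\nu$ through $c$ to a subadditive, symmetric, continuous function on $\RR$, then get superadditivity by cutting an almost length-minimizing path at intermediate $c$-levels via the intermediate value theorem). Your closing observation that $\lambda>0$ requires $\rho$ to be non-trivial is the same implicit caveat carried by the paper's statement via \cite{EP93}.
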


\begin{proof}
By assumption, the condition (C2) yields that $N_{\nu}\supset\Ker{(c)}$.
Now the exactly same argument in \cite[Lemmas 3.3.B--3.3.E]{EP93} completes the proof.
\end{proof}

Now we are ready to prove Theorem \ref{open_lphofer}.

\begin{proof}[Proof of Theorem \ref{open_lphofer}]
Let $p\geq 1$ be an integer.
Lemma \ref{EP_lemma} and Example \ref{example:Relative_Calabian} imply that one can apply Theorem \ref{thm:EP_Theorem_3.3.A} for $G=\tHam(M,\omega)$, $N=\pi_1(\Ham(M,\omega))$, $\rho=\tilde{\rho}_p$, $\nu=\tilde{\nu}_p$, and $c=\Cal$.
Then there exists a positive constant $\lambda_p$ such that $\tilde{\rho}_p(\Id,f)=\lambda_p\lvert\Cal(f)\rvert$ for every $f\in\tHam(M,\omega)$.
Now an argument similar to the proof of \cite[Corollary 1.4.B]{EP93} shows that the constant $\lambda_p$ coincides with $(\mathrm{Volume}(M,\omega))^{(1-p)/p}$.
\end{proof}

\section{Covering groups of simple topological groups}\label{Yoshiko_section}

The goal of this section is to prove Theorem \ref{mtst_simple}. In Subsection \ref{prel_of_Polish}, we review several definitions and facts concerning Polish groups. In Subsection \ref{subsection separable groups}, we discuss a basic property of the separability of topological groups. The results given in these subsections seem to be folklore, and are not new. We complete the proof of Theorem \ref{mtst_simple} in Subsection \ref{subsection proof of Polish groups}.


\subsection{Polish groups}\label{prel_of_Polish}

In this subsection, we review several facts of Polish groups which we will need later. First, we recall the definition of Polish group. For a comprehensive introduction to this subject, we refer the reader to \cite{Bourbaki} and \cite{Kechris}.

\begin{definition}
A \emph{Polish space} is a topological space that is homeomorphic to a separable complete metric space. A topological group $G$ is called a \emph{Polish group} if its underlying topological space is Polish.
\end{definition}

Here we review some known facts related to Polish spaces. In the proof of Theorem \ref{mtst_simple}, we will frequently use the following characterization of Polish subspaces:

\begin{lemma}[{\cite[Chapter 9 \S 6 $\mathrm{N}^\circ$ Th\'{e}or\`{e}me 1]{Bourbaki}}] \label{lemma Polish facts}
Let $X$ be a Polish space. A subset $A$ of $X$ is a Polish subspace if and only if $A$ is a $G_\delta$-set of $X$, i.e., a countable intersection of open subsets in $X$.
\end{lemma}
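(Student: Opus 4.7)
The plan is to prove both implications separately, as this is a classical statement combining a construction due to Alexandrov (for the ``if'' direction) and an argument due to Lavrentiev (for the ``only if'' direction).

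For the ``if'' direction, write $A = \bigcap_{n=1}^\infty U_n$ with each $U_n$ open in $X$. First I would show that every open subset $U$ of a Polish space $(X,d)$ is itself Polish: when $U \ne X$, the new metric
\[
d_U(x,y) = d(x,y) + \left|\frac{1}{d(x, X \setminus U)} - \frac{1}{d(y, X \setminus U)}\right|
\]
induces the same topology as $d$ on $U$ but is complete, because a $d_U$-Cauchy sequence cannot have $d(\cdot, X \setminus U)$ tending to zero, and separability of $U$ is inherited from $X$. Then I would embed $A$ diagonally into the countable product $\prod_{n \in \NN} U_n$; the image is closed (the equalizer of the coordinate projections), and a closed subspace of a Polish space is Polish, so $A$ is Polish.

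For the ``only if'' direction, which is the main task, suppose $A$ carries a complete separable metric $\rho$ compatible with its subspace topology, and for each $n \ge 1$ set
\[
V_n = \{\, x \in \overline{A} : \text{there exists an open } W \ni x \text{ in } X \text{ with } \mathrm{diam}_\rho(W \cap A) < 1/n \,\}.
\]
A direct check shows that $V_n$ is open in $\overline{A}$ and contains $A$ (using that the $\rho$-topology on $A$ equals the subspace topology, so small $\rho$-balls are cut out by open sets of $X$). The crux is the reverse inclusion $\bigcap_n V_n \subseteq A$: given $x \in \bigcap_n V_n$, one chooses a decreasing sequence of open neighborhoods $W_n \ni x$ with $\mathrm{diam}_\rho(W_n \cap A) < 1/n$, and picks $a_n \in W_n \cap A$ (possible since $x \in \overline{A}$) converging to $x$ in $X$. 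The sequence $(a_n)$ is $\rho$-Cauchy by construction, hence converges to some $a \in A$ by completeness of $\rho$; continuity of the inclusion $(A,\rho) \hookrightarrow X$ then forces $a = x$, so $x \in A$. Writing each $V_n = \overline{A} \cap W_n'$ with $W_n'$ open in $X$, and using that the closed set $\overline{A}$ is $G_\delta$ in the metrizable space $X$, one concludes that $A$ is a $G_\delta$-set.

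The main obstacle is the ``only if'' direction, and within it the delicate interplay between the inner complete metric $\rho$ on $A$ and the subspace topology inherited from $X$: without completeness of $\rho$ one cannot extract the limit $a$, and without the compatibility of $\rho$ with the subspace topology one cannot conclude $a = x$ in $X$.
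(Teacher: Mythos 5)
Your proof is correct, but note that the paper does not prove this lemma at all: it simply quotes it from Bourbaki (Chapter 9, \S 6, Th\'eor\`eme 1) and uses it as a black box. What you have written is essentially the classical argument that the Bourbaki reference contains: Alexandrov's re-metrization $d_U(x,y)=d(x,y)+\bigl|\tfrac{1}{d(x,X\setminus U)}-\tfrac{1}{d(y,X\setminus U)}\bigr|$ for open sets plus the closed diagonal embedding of $\bigcap_n U_n$ into $\prod_n U_n$ for the ``if'' direction, and the Lavrentiev--Mazurkiewicz oscillation sets $V_n$ for the ``only if'' direction. Both halves are sound; together with $A\subseteq V_n$ for all $n$, your inclusion $\bigcap_n V_n\subseteq A$ gives $A=\overline{A}\cap\bigcap_n W_n'$, and since a closed set in a metrizable space is $G_\delta$, the conclusion follows. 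One small point to make airtight: merely taking the $W_n$ decreasing guarantees that $(a_n)$ is $\rho$-Cauchy but not that $a_n\to x$ in $X$; you should additionally shrink $W_n$ so that $W_n\subseteq B_d(x,1/n)$ (or equivalently choose $a_n\in W_n\cap A\cap B_d(x,1/n)$, which is nonempty because $x\in\overline{A}$), after which the identification $a=x$ and hence $x\in A$ goes through exactly as you say. With that refinement, your write-up is a complete, self-contained proof of the cited fact.
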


In particular, every open subset of a Polish space is a Polish subspace. A finite union of Polish subspaces is again a Polish subspace.

\begin{definition}
Let $X$ be a topological space.
\begin{enumerate}[(1)]
\item A subset $A$ of $X$ is said to be \emph{nowhere dense} if the closure $\overline{A}$ does not have an internal point.

\item A countable union of nowhere dense subsets is said to be \emph{meager}.

\item The topological space $X$ is called a \emph{Baire space} if $X$ is not meager in $X$.

\item A subset $A$ of $X$ is said to be \emph{almost open} or \emph{Baire measurable} if there is an open set $U$ in $X$ such that $A \triangle U = (U \setminus A) \cup (A \setminus U)$ is meager.
\end{enumerate}
\end{definition}

It follows from the Baire category theorem that a Polish space is a Baire space.

Clearly, every open subset of $X$ is almost open. The set of almost open subsets in $X$  forms  a $\sigma$-algebra. Therefore, every Borel subset of $X$ is almost open.

\begin{definition}
A topological group $G$ is a \emph{Baire group} if its underlying topological space is a Baire space.
\end{definition}

\begin{proposition} \label{proposition Bourbaki}
Let $G$ be a Baire group  and $A$ a subset of $G$. If $A$ is almost open and non-meager, then $A A^{-1}$ is a neighborhood of $e_G$.
\end{proposition}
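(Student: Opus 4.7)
The plan is to follow the classical Banach--Pettis style argument. Since $A$ is almost open, by definition there exists an open set $U \subseteq G$ such that $A \triangle U$ is meager. The first step is to observe that $U$ must be non-empty: since $A \setminus U \subseteq A \triangle U$ is meager and $A$ itself is non-meager, the intersection $A \cap U$ is non-meager, and in particular $U \ne \emptyset$.

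Next I would show that the candidate neighborhood is $UU^{-1}$, which is open (as the union of the open sets $U u^{-1}$ for $u \in U$) and contains $e_G$ (pick any $u \in U$ and write $e_G = u u^{-1}$). I claim $UU^{-1} \subseteq AA^{-1}$. Fix $g \in UU^{-1}$, so that there exist $u_1, u_2 \in U$ with $g = u_1 u_2^{-1}$; then $u_1 \in U \cap gU$, so $U \cap gU$ is a non-empty open subset of $G$. Because $G$ is a Baire group, $U \cap gU$ is non-meager in $G$.

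Now I compare $U \cap gU$ with $A \cap gA$. Writing
\[
(U \cap gU) \setminus (A \cap gA) \subseteq (U \setminus A) \cup g(U \setminus A),
\]
both terms on the right are meager (the first because $U \setminus A \subseteq A \triangle U$ is meager, and the second because meagerness is preserved by the homeomorphism of left translation by $g$). Hence $(U \cap gU) \setminus (A \cap gA)$ is meager, but $U \cap gU$ is not; therefore $A \cap gA \ne \emptyset$. Picking $a \in A \cap gA$ yields $b \in A$ with $a = gb$, so $g = ab^{-1} \in AA^{-1}$. This shows $UU^{-1} \subseteq AA^{-1}$, completing the proof.

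I do not expect any serious obstacle: the only delicate point is making sure the translations behave well with respect to meagerness, which is immediate since left and right translations are homeomorphisms of $G$, and the Baire property is used only in the transparent form that a non-empty open set in a Baire space is non-meager.
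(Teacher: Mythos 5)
Your argument is correct, but it takes a genuinely different route from the paper. The paper handles this proposition essentially by citation: it invokes Bourbaki's lemma for the case where $A$ is a Borel set and reduces the general case to it by observing that every almost open set is the union of a $G_\delta$-set and a meager set. You instead give a self-contained Banach--Pettis argument: pick open $U$ with $A \triangle U$ meager, note $U \ne \emptyset$ because $A$ is non-meager, and prove $UU^{-1} \subseteq AA^{-1}$ by comparing $U \cap gU$ with $A \cap gA$ up to meager error. Each step checks out: $u_1 = g u_2$ lies in $U \cap gU$, the inclusion $(U \cap gU) \setminus (A \cap gA) \subseteq (U \setminus A) \cup g(U \setminus A)$ is valid, translation preserves meagerness, and $A \cap gA \ne \emptyset$ gives $g \in AA^{-1}$. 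Your approach buys independence from the external reference (and works verbatim for any Baire group), while the paper's buys brevity. One point worth reconciling with the paper's conventions: the paper defines a Baire space as one that is not meager in itself, whereas you use the form ``every non-empty open subset of a Baire space is non-meager.'' For topological groups these are equivalent, but with the paper's definition your step needs one extra line: if a non-empty open $V \subseteq G$ were meager, then every translate $gV$ is open and meager, so $G = \bigcup_{g \in G} gV$ would be meager by the Banach category theorem (an arbitrary union of open meager sets is meager), contradicting that $G$ is non-meager in itself; alternatively, in the paper the proposition is only applied to Polish groups, where the Baire category theorem gives the stronger form directly. This is a remark, not a gap.
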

\begin{proof}
This is \cite[Lemme 9 IX \S 6 $\mathrm{N}^\circ$  8]{Bourbaki} when $A$ is a Borel subset. To deduce the general case, it is sufficient to note that every almost open subset is a union of a $G_\delta$-set and a meager set.
\end{proof}

\begin{definition} \label{definition analytic subset}
Let $X$ be a Polish space. A subset $A$ of $X$ is called an \emph{analytic subset} if $A$ satisfies the following condition: There exist a Polish space $Y$, a Borel subset $B$ of $Y$ and a Borel measurable map $f \colon Y \to X$ such that $f(B) = A$.
\end{definition}

The following is a main tool in the proof of Theorem \ref{mtst_simple}.

\begin{theorem}[{\cite[Theorem 21.6]{Kechris}}]
Let $X$ be a Polish space. Then every analytic subset of $X$ is almost open.
\end{theorem}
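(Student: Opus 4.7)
The plan is to deduce the theorem from the fact that the class of almost open subsets of $X$ is closed under the Souslin operation $\mathcal{A}$, combined with the representation of analytic sets as Souslin schemes of closed sets.

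First, I would show that every analytic subset $A \subseteq X$ admits a representation of the form
\[
A = \bigcup_{x \in \NN^{\NN}} \bigcap_{n \in \NN} F_{x|n},
\]
where $\{F_s\}_{s \in \NN^{<\NN}}$ is a regular Souslin scheme of closed subsets of $X$ (regular meaning $F_t \subseteq F_s$ whenever $s$ is an initial segment of $t$). Starting from Definition \ref{definition analytic subset}, $A = f(B)$ for some Borel measurable $f \colon Y \to X$ and Borel set $B \subseteq Y$ with $Y$ Polish. The standard change-of-topology lemma produces a finer Polish topology on $Y$ in which $f$ becomes continuous and $B$ becomes closed; then writing $B$ as a continuous image of the Baire space $\NN^{\NN}$ and composing with $f$, the displayed representation follows from the natural scheme associated with the tree of finite sequences.

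Second, I would prove that the family $\mathrm{BM}(X)$ of almost open subsets of $X$ is closed under the Souslin operation. For each $E_s \in \mathrm{BM}(X)$, choose an open $U_s$ with $E_s \triangle U_s$ meager, and put
\[
E^{*} = \bigcup_{x \in \NN^{\NN}} \bigcap_{n \in \NN} E_{x|n}, \qquad U^{*} = \bigcup_{x \in \NN^{\NN}} \bigcap_{n \in \NN} U_{x|n}.
\]
One verifies that $E^{*} \triangle U^{*}$ is contained in $\bigcup_{s \in \NN^{<\NN}}(E_s \triangle U_s)$, which is meager by $\sigma$-additivity of the meager ideal, so it suffices to show $U^{*}$ itself is almost open. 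This is the classical Nikodym--Lusin theorem: at each node $s$ of the scheme one introduces a fringe set measuring the failure of the partial union at level $s$ to equal an open approximation of $U^{*}$ locally, and a careful analysis shows that the countable aggregate of these fringes catches the entire discrepancy between $U^{*}$ and a specific open set.

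Combining these two steps and noting that closed sets are trivially almost open yields the conclusion, since the Souslin operation applied to the closed scheme from the first step lands in $\mathrm{BM}(X)$ by the second. The main obstacle is the closure of $\mathrm{BM}(X)$ under the Souslin operation: the operation is indexed by the uncountable set $\NN^{\NN}$, yet all error must be controlled using only the $\sigma$-additivity of the meager ideal. The key insight is Nikodym's countable decomposition of the fringe set indexed by the nodes $s \in \NN^{<\NN}$, and this is where the entire descriptive-set-theoretic content of the theorem is concentrated.
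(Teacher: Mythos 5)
This statement is quoted by the paper directly from Kechris (Theorem 21.6) with no proof supplied, so there is no in-paper argument to compare against; for what it is worth, Kechris's own Section~21 proof is by the unfolded Banach--Mazur game, whereas you follow the other classical route (Lusin--Sierpi\'nski/Nikod\'ym) through the Souslin operation. As an outline your route is sound: the representation of a nonempty analytic set as $\bigcup_{x\in\NN^{\NN}}\bigcap_n F_{x|n}$ with $F_s=\overline{g(N_s)}$ for a continuous surjection $g\colon \NN^{\NN}\to A$ (obtained via the change-of-topology lemma) is correct, the inclusion $\supseteq$ following from the standard convergence argument; and your computation $E^{*}\triangle U^{*}\subset\bigcup_{s}(E_s\triangle U_s)$ is valid, so $\sigma$-additivity of the meager ideal does reduce the problem as you say. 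The caveat is that this reduction buys essentially nothing: the Souslin operation applied to open (or closed) schemes still produces exactly the analytic sets, so the entire content of the theorem sits in the step you label ``the classical Nikodym--Lusin theorem,'' which you invoke by name and describe only impressionistically (``fringe sets,'' ``a careful analysis shows''). A complete write-up must carry that argument out: for each node $s$ take a Baire-property hull $B_s$ of the partial result $A^{s}=\bigcup_{\alpha\supset s}\bigcap_{n}F_{\alpha|n}$, i.e.\ a set with the Baire property containing $A^{s}$ and minimal modulo meager sets (such hulls exist for the meager ideal by a second-countability argument), arranged so that $B_s\subset F_s$ and $B_t\subset B_s$ for $s\subset t$; then one checks that $B_{\emptyset}\setminus\bigcup_{x}\bigcap_n F_{x|n}$ is contained in the countable union over $s$ of $B_s\setminus\bigcup_k B_{s^{\frown}k}$, each term of which is meager by hull-minimality, whence the analytic set differs from $B_\emptyset$ by a meager set. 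With that supplied the proof is complete; as it stands, your proposal is a correct plan whose decisive lemma is cited rather than proved.
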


The following proposition seems a folklore fact among experts, but we provide a proof of it for the convenience of the reader.

\begin{proposition} \label{proposition Polish fiber bundle}
Let $p \colon X \to Y$ be a fiber bundle over a Polish space $Y$ such that 
 $p^{-1}(y)$  is Polish for every 
 $y \in Y$.  Then $X$ is Polish.
\end{proposition}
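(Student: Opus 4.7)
The plan is to use local triviality of $p$ together with Lemma \ref{lemma Polish facts} to express $X$ as a countable union of open Polish subspaces, and then to glue these into a global Polish structure on $X$.

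First, since $Y$ is Polish it is second countable and hence Lindel\"of, so the trivializing open cover of $Y$ arising from local triviality of the bundle admits a countable subcover $\{U_n\}_{n \in \NN}$. For each $n$ there is a homeomorphism $p^{-1}(U_n) \cong U_n \times F_n$, where $F_n \cong p^{-1}(y_n)$ for some $y_n \in U_n$ is Polish by hypothesis. By Lemma \ref{lemma Polish facts}, $U_n$ is Polish as an open (hence $G_\delta$) subset of $Y$, and the product of two Polish spaces is Polish (take the product of compatible complete separable metrics). Hence each $p^{-1}(U_n)$ is Polish in its subspace topology from $X$, and $X = \bigcup_n p^{-1}(U_n)$ is a countable union of open Polish subspaces.

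The main obstacle is the last step: concluding that $X$ itself is Polish from this presentation. I would first verify that $X$ is metrizable. Second countability of $X$ is immediate from the countable cover by second countable pieces; Hausdorffness is inherited, since any two points of $X$ can be separated either inside a common $p^{-1}(U_n)$ or by lifting a separation from the Hausdorff base $Y$; and local metrizability combined with Lindel\"ofness of $X$ yields paracompactness, so Smirnov's metrization theorem gives metrizability. For complete metrizability, I would pass to the disjoint refinement $V_1 := U_1$ and $V_n := U_n \setminus (U_1 \cup \cdots \cup U_{n-1})$ for $n \geq 2$, so that each $p^{-1}(V_n)$ is closed in the Polish space $p^{-1}(U_n)$ and is therefore Polish. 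Fixing a compatible metric on $X$, I would embed $X$ as a dense subset of its completion $\widehat{X}$ and argue that $X$ is $G_\delta$ in $\widehat{X}$: each $p^{-1}(U_n)$ is Polish and hence $G_\delta$ in $\widehat{X}$, and the disjoint refinement lets one rewrite $X = \bigsqcup_n p^{-1}(V_n)$ as a countable intersection of opens in $\widehat{X}$. Alexandrov's theorem (the harder direction of Lemma \ref{lemma Polish facts}) then yields that $X$ is Polish.
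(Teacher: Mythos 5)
Your reduction of the problem to the statement that a countable union of open Polish subspaces is Polish is fine, and that statement is true, but the step where you prove it --- the heart of the matter --- has a genuine gap. You assert that each $p^{-1}(U_n)$ is $G_\delta$ in the completion $\widehat{X}$ and that ``the disjoint refinement lets one rewrite $X=\bigsqcup_n p^{-1}(V_n)$ as a countable intersection of opens'', but no mechanism for this rewriting is given, and the inference it suggests --- that a countable \emph{disjoint} union of $G_\delta$ (equivalently, Polish) subspaces of $\widehat{X}$ is again $G_\delta$ --- is false: $\QQ=\bigsqcup_{q\in\QQ}\{q\}$ is a disjoint countable union of closed, hence Polish, subsets of $\RR$, yet $\QQ$ is not $G_\delta$ in $\RR$ and not Polish. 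Disjointness buys nothing; what makes the statement true is that the pieces $p^{-1}(U_n)$ are \emph{open} in $X$, and your final step never uses this. A correct finish along your lines: choose open sets $W_n\subseteq\widehat{X}$ with $W_n\cap X=p^{-1}(U_n)$; since $p^{-1}(U_n)$ is Polish, Lemma \ref{lemma Polish facts} makes it a $G_\delta$ in the Polish space $W_n$, so $W_n\setminus X=W_n\setminus p^{-1}(U_n)$ is $F_\sigma$ in $W_n$, hence $F_\sigma$ in $\widehat{X}$ (open subsets of metric spaces are $F_\sigma$). Setting $W=\bigcup_n W_n$, the set $W\setminus X=\bigcup_n(W_n\setminus X)$ is $F_\sigma$, so $X=W\cap\bigl(\widehat{X}\setminus(W\setminus X)\bigr)$ is $G_\delta$ in $\widehat{X}$, and Lemma \ref{lemma Polish facts} gives that $X$ is Polish. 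The paper sidesteps this entirely: it maps the Polish space $\coprod_n U_n\times p^{-1}(x_n)$ onto $X$ by an open continuous surjection and quotes Michael's theorem (Theorem \ref{theorem Michael}); the remaining work there is only to check that $X$ is paracompact and Hausdorff.

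There is a second, smaller gap in your metrizability step: ``local metrizability combined with Lindel\"ofness yields paracompactness'' is not a valid implication for Hausdorff spaces. Smirnov's deleted sequence topology on $\RR$ (Euclidean open sets with arbitrary subsets of $\{1/n\}_{n\ge1}$ removed) is Hausdorff, Lindel\"of and locally metrizable, but not regular, hence not paracompact. What you need is regularity of $X$; it does hold here, but requires an argument --- for instance, given $x$ in a trivializing chart $p^{-1}(U_n)$, use regularity of $Y$ and continuity of $p$ to find a neighborhood of $x$ whose closure in $X$ already lies inside $p^{-1}(U_n)$, and then separate inside that metrizable chart. Once $X$ is known to be regular, Hausdorff and second countable, Urysohn's metrization theorem already gives metrizability, so Smirnov is not needed; alternatively, paracompactness can be obtained as in the paper's proof from a \emph{locally finite} countable trivializing cover (note that you only extracted a countable subcover, not a locally finite one).
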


To prove this, we use the following theorem:

\begin{theorem}[\cite{Michael}] \label{theorem Michael}
Let $f \colon X \to Y$ be an open continuous surjective map from a completely metrizable space $X$ to a paracompact Hausdorff space $Y$. Then $Y$ is completely metrizable.
\end{theorem}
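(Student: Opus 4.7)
The plan is to display $X$ as a continuous, open, surjective image of a Polish space, and then invoke Michael's theorem (Theorem \ref{theorem Michael}) to conclude that $X$ is completely metrizable, upgrading this to Polish via a separability check.

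First, I would exploit that the Polish base $Y$ is second countable, hence Lindel\"of, to extract from any trivializing open cover of the fiber bundle a countable trivializing subcover $\{ U_n\}_{n \in \NN}$ of $Y$. Setting $V_n = p^{-1}(U_n)$, each $V_n$ is homeomorphic to $U_n \times F_n$ for some Polish fiber $F_n$. Since $U_n$ is an open subset of the Polish space $Y$ (thus Polish by Lemma \ref{lemma Polish facts}) and $F_n$ is Polish, each $V_n$ is Polish, and the $V_n$ cover $X$.

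Second, I would form the topological disjoint union $W = \bigsqcup_{n \in \NN} V_n$, which is Polish as a countable coproduct of Polish spaces. The natural map $q \colon W \to X$ assembled from the inclusions $V_n \hookrightarrow X$ is continuous, surjective, and open: openness follows because each $V_n$ is open in $X$ and open subsets of $W$ are disjoint unions of open subsets of the individual $V_n$.

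Third, I would check that $X$ is paracompact Hausdorff. Hausdorffness is immediate from the Hausdorffness of $Y$ (to separate points lying in distinct fibers) and the Hausdorffness of the Polish fiber (to separate points in a common fiber inside a local trivialization). For paracompactness, I would apply Urysohn's metrization theorem: $X$ is second countable, being a countable union of second countable open sets $V_n$, and regular, being Hausdorff and locally metrizable; hence $X$ is metrizable, and by Stone's theorem paracompact. Then Michael's theorem applied to $q \colon W \to X$ delivers complete metrizability of $X$, while second countability yields separability, so $X$ is Polish. The main obstacle I anticipate is precisely the paracompact Hausdorff verification in the third step; the route through second countability, regularity, and Urysohn's theorem seems cleanest, but the countability of the trivializing cover and the local metrizability of $X$ must both be handled carefully to avoid circularity.
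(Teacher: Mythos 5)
The statement you were asked to prove is Michael's theorem itself: an open continuous surjective image of a completely metrizable space is completely metrizable, provided the image is paracompact and Hausdorff. The paper does not prove this statement; it is imported from \cite{Michael} and used as a black box. Your proposal does not prove it either. What you actually argue is that the total space of a fiber bundle with Polish base and Polish fibers is Polish --- that is Proposition \ref{proposition Polish fiber bundle} of the paper --- and in the course of that argument you explicitly ``invoke Michael's theorem (Theorem \ref{theorem Michael})'' to get complete metrizability. As a proof of Theorem \ref{theorem Michael} this is circular (the theorem is assumed in its own proof), and in addition the conclusion concerns a different object ($X$, the total space of a bundle, with the roles of source and target of the map reversed relative to the statement) under different hypotheses (a bundle projection with Polish base and fibers, rather than an arbitrary open continuous surjection out of a completely metrizable space). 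A genuine proof of Michael's theorem has to engage with completeness directly --- for instance by transporting a complete sequence of open covers from $X$ down to $Y$ and combining this with paracompactness to produce a complete metric on $Y$ --- and nothing of that nature appears in your proposal.

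A secondary remark, relevant if you meant this as a proof of Proposition \ref{proposition Polish fiber bundle} rather than of Theorem \ref{theorem Michael}: your route to paracompactness of the total space via Urysohn's metrization theorem leans on the implication ``Hausdorff and locally metrizable implies regular,'' which is false in general. The real line with the $K$-topology (basic open sets $(a,b)$ and $(a,b) \setminus K$ with $K = \{ 1/n \mid n \ge 1\}$) is Hausdorff and locally metrizable but not regular, since $0$ cannot be separated from the closed set $K$. So regularity of the total space needs a genuine argument; the paper sidesteps this by proving paracompactness directly, refining an arbitrary open cover chart by chart over a locally finite countable trivializing cover of the base and only then applying Theorem \ref{theorem Michael}.
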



\begin{proof}[Proof of Proposition \ref{proposition Polish fiber bundle}]
Since $Y$ is a Polish space, $Y$ is a paracompact Lindel\"of space. Hence $Y$ has a countable open covering $(U_n)_{n \in \NN}$ which satisfies the following:
\begin{enumerate}[(1)]
\item The open covering $(U_n)_{n \in \NN}$ is locally finite.

\item For every $n \in \NN$, there are $x_n \in U_n$ and a homeomorphism
\[ \varphi_n \colon U_n \times p^{-1}(x_n) \to p^{-1}(U).\]
\end{enumerate}

Since $U_n$ is an open subset of a Polish space $Y$, $U_n$ is a Polish space. Since a finite product of Polish spaces and a countable disjoint union of Polish spaces are Polish spaces, the space
\[ \widehat{X} = \coprod_n U_n \times p^{-1}(x_n)\]
is a Polish space. Define the map $\varphi \colon \widehat{X} \to X$ to be the sum of $\varphi_n$, i.e., the restriction of $\varphi$ to $U_n \times p^{-1}(x_n)$ coincides with $\varphi_n$. Then $\varphi$ is an open continuous map and 
 its
image is $X$. 
By Theorem \ref{theorem Michael} to complete the proof, it suffices to show that $X$ is paracompact and Hausdorff. We can easily show that $X$ is Hausdorff and omit the detail. Thus we only show the paracompactness of $X$.


Let $\mathcal{U}$ be an arbitrary open covering of $X$. We want to find a locally finite open covering $\mathcal{V}$ of $X$ which is a refinement of $\mathcal{U}$. For each $n \in \NN$, define the open covering $\mathcal{U}_n$ of $p^{-1}(U_n)$ by
\[ \mathcal{U}_n = \{ U \cap p^{-1}(U_n) \; | \; U \in \mathcal{U}\}.\]
Since $p^{-1}(U_n)$ is Polish, there is a locally finite open covering $\mathcal{V}_n$ of $p^{-1}(U_n)$ which is a refinement of $\mathcal{U}_n$. Then set
\[ \mathcal{V} = \bigcup_{n \in \NN} \mathcal{V}_n.\]
Since $(U_n)_{n \in \NN}$ is locally finite, we conclude that $\mathcal{V}$ is locally finite. This completes the proof.
\end{proof}

\subsection{Preliminaries of the separability of topological groups} \label{subsection separable groups}

Here we prove some facts of separability of topological groups we need in the proof of Theorem \ref{mtst_simple}. All of these are well-known or folklore facts
(for example, see \cite[Exercise 2.4.6]{Oh15a}).

\begin{lemma} \label{lemma separable}
Let $G$ be a connected topological group. Then the following are equivalent:
\begin{enumerate}[$(1)$]
\item $G$ is separable.

\item Every open neighborhood of $e_G$ is separable.

\item There is an open neighborhood $U$ of $e_G$ in $G$ such that $U$ is separable.
\end{enumerate}
\end{lemma}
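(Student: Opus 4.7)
The plan is to verify the three implications $(1)\Rightarrow(2)\Rightarrow(3)\Rightarrow(1)$, with the real content being $(3)\Rightarrow(1)$; the other two directions reduce to elementary general-topology facts.

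For $(1)\Rightarrow(2)$, I would invoke the standard fact that an open subspace of a separable space is separable: if $D\subset G$ is a countable dense subset and $U\subset G$ is any open neighborhood of $e_G$, then $D\cap U$ is a countable dense subset of $U$. Implication $(2)\Rightarrow(3)$ is immediate since $G$ itself is an open neighborhood of $e_G$ (or any smaller neighborhood will do).

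For $(3)\Rightarrow(1)$, the strategy is the classical ``open symmetric neighborhood generates the group'' argument, combined with connectedness. Starting from a separable open neighborhood $U$ of $e_G$, first replace it by $V = U\cap U^{-1}$, which is a symmetric open neighborhood of $e_G$; since inversion is a homeomorphism, $V$ is open, and as an open subspace of $U$ it remains separable. Then consider the subgroup
\[
H = \bigcup_{n\ge 1} V^n.
\]
I would observe that $H$ is an open subgroup of $G$ (since $V^n$ is open as a union of translates of an open set), and hence $H$ is also closed (its complement is a union of cosets, each open). Because $G$ is connected, this forces $H=G$.

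The remaining step is to see that each $V^n$ is separable, whence $G=\bigcup_n V^n$ is a countable union of separable subsets, and therefore separable. For this, I would use two routine stability properties: the product of two separable spaces is separable (a product of countable dense subsets is dense in the product topology), and the continuous image of a separable space is separable. By induction, if $V^{n-1}$ is separable, then $V\times V^{n-1}$ is separable, and hence so is its image $V^n$ under the continuous multiplication map. The only mildly delicate point—hardly an obstacle—is to be careful that separability is used only for open subspaces or for continuous images and products, since general subspaces of separable spaces need not be separable; both uses above are of this safe type, so the argument goes through without any further hypothesis on $G$.
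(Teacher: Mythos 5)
Your proof is correct and follows essentially the same route as the paper: the content is $(3)\Rightarrow(1)$, proved by showing that the sets $V^n$ generated by a neighborhood of $e_G$ are separable and exhaust $G$ by connectedness. The only cosmetic differences are that you symmetrize $U$ and use the open-subgroup-is-clopen argument together with abstract stability facts (products and continuous images of separable spaces), whereas the paper keeps $U$ as is, notes that the semigroup $\bigcup_n U^n$ and its complement are both open, and exhibits the explicit countable dense set $\bigcup_n K^n$ with $U^n \subset \overline{K^n}$.
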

\begin{proof}
The proof of $(1) \Rightarrow (2)$ is clear since every open subset of a separable space is separable. The implication $(2) \Rightarrow (3)$ is also clear. We show $(3) \Rightarrow (1)$. Let $U$ be a separable open neighborhood of  $e_G$ in  $G$. Let $K \subset U$ be a countable set such that $U \subset \overline{K}$. For a positive integer $n$, set
\[ U^n = \{ x_1 \cdots x_n \; | \; \textrm{$x_i \in U$ for every $i = 1, \cdots, n$}\}.\]
Note that $U^n \subset \overline{K^n}$ and hence that $U^n$ is also separable. Set
\[ V = \bigcup_{n \ge 1} U^n.\]
Then both $V$ and $G \setminus V$ are open. Since $G$ is connected, we have that $V = G$. Set $ K'  = \bigcup_{n \ge 1} K^n$.  Then we have that $U^n \subset \overline{K^n} \subset \overline{ K' }$ for every $n \ge 1$. This means that $G = \overline{ K' }$, and hence $G$ is separable. This completes the proof.
\end{proof}

\begin{corollary} \label{corollary separable 1}
Let $p \colon G \to H$ be a continuous homomorphism between connected topological groups such that $p$ is a covering map. Then $G$ is separable if and only if $H$ is separable.
\end{corollary}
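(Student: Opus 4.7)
The plan is to prove each direction separately, with Lemma \ref{lemma separable} doing most of the work.

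For the forward direction, I would use that $p$ is surjective: by the definition of a covering map, the image $p(G)$ is all of $H$. Hence $H$ is the continuous image of the separable space $G$, so $H$ is separable. This step is essentially immediate and does not require the connectedness of $H$ beyond what is already assumed.

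For the converse, assume $H$ is separable. By Lemma \ref{lemma separable}, applied to the connected topological group $G$, it suffices to exhibit a single separable open neighborhood of $e_G$ in $G$. Since $p$ is a covering map, one can choose an evenly covered open neighborhood $V$ of $e_H$, so that $p^{-1}(V) = \bigsqcup_{\alpha} U_\alpha$ with each restriction $p|_{U_\alpha} \colon U_\alpha \to V$ a homeomorphism. Let $U$ be the sheet containing $e_G$. Then $U \cong V$ as topological spaces. Applying the implication $(1) \Rightarrow (2)$ of Lemma \ref{lemma separable} to the connected separable group $H$, the neighborhood $V$ of $e_H$ is separable, hence so is $U$. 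Invoking Lemma \ref{lemma separable} once more (this time the implication $(3) \Rightarrow (1)$) for $G$, we conclude that $G$ is separable.

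There is no real obstacle here: the corollary is essentially a bookkeeping consequence of Lemma \ref{lemma separable} combined with the local homeomorphism property of covering maps. The only mild subtlety is ensuring surjectivity of $p$ in the forward direction (which is built into the definition of a covering map) and that the neighborhood $U$ chosen in the reverse direction really does sit inside $G$ as an open set homeomorphic to a separable open set in $H$; both points are automatic.
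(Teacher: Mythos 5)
Your proof is correct. The backward direction (from $H$ separable to $G$ separable) is exactly the argument the paper has in mind: pick a sheet $U$ over an evenly covered neighborhood $V$ of $e_H$, transfer separability of $V$ (which holds since open subsets of separable spaces are separable, i.e.\ the $(1)\Rightarrow(2)$ part of Lemma \ref{lemma separable}) to $U$ through the homeomorphism $p|_U$, and then invoke $(3)\Rightarrow(1)$ of Lemma \ref{lemma separable} for the connected group $G$. Where you diverge is the forward direction: the paper runs the same local-homeomorphism argument symmetrically ($G$ separable $\Rightarrow$ $U$ separable $\Rightarrow$ $V$ separable $\Rightarrow$ $H$ separable via Lemma \ref{lemma separable}), whereas you simply observe that $H$ is the continuous surjective image of a separable space. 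Your version of that direction is more elementary and more general --- it needs neither the covering property nor connectedness, only continuity and surjectivity --- while the paper's version keeps both directions uniform and never has to address surjectivity at all. On that last point, your parenthetical is harmless: even under conventions where a covering map need not be onto, the image of $p$ contains an open neighborhood of $e_H$, hence is an open (and therefore closed) subgroup of the connected group $H$, so $p$ is surjective in this setting anyway.
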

\begin{proof}
In this case, there are an open neighborhood $U$ of $e_G$ in $G$ and an open neighborhood $V$ of $e_H$ in $H$ such that $p$ sends $U$ homeomorphically to $V$. If $G$ is separable, then Lemma \ref{lemma separable} implies that $U$ is separable. Hence $V$ is separable and therefore Lemma \ref{lemma separable} implies that $H$ is separable. The proof of the converse is similar, and the details are omitted.
\end{proof}

\begin{corollary} \label{corollary separable 2}
Let $p \colon G \to H$ be a continuous surjective homomorphism between connected topological groups. Assume that $H$ is separable and the kernel of $p$ is discrete. Then $\Ker(p)$ is countable.
\end{corollary}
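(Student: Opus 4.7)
My approach is to deduce the countability of $K$ from the separability of $G$. Since $K$ is discrete, I would first choose a symmetric open neighborhood $V$ of $e_G$ with $VV \cap K = \{e_G\}$. Then the translates $\{kV\}_{k \in K}$ are pairwise disjoint nonempty open subsets of $G$: if $k_1 v_1 = k_2 v_2$ with $v_1, v_2 \in V$, then $k_2^{-1} k_1 \in VV \cap K = \{e_G\}$, forcing $k_1 = k_2$. Consequently, once $G$ is known to be separable, any countable dense subset $D \subset G$ must intersect each $kV$, yielding an injection $K \hookrightarrow D$ and hence the countability of $K$.

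To establish the separability of $G$, I would reduce to Corollary \ref{corollary separable 1} via the quotient map $\pi \colon G \to G/K$, which is a covering map because $K$ is a discrete normal subgroup: the family $\{kV\}_{k \in K}$ constitutes a local trivialization of $\pi$ over the open set $\pi(V) \subset G/K$, and translation by $G$ extends this local picture globally. Factoring $p = \bar p \circ \pi$, the induced map $\bar p \colon G/K \to H$ is a continuous bijective group homomorphism, and Corollary \ref{corollary separable 1} applied to $\pi$ reduces the separability of $G$ to that of $G/K$.

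The main obstacle is then to transfer the separability from $H$ back to $G/K$ through the continuous bijection $\bar p$. Continuous bijections do not preserve separability under inverse image in general, so some additional input is needed; I would aim to show that $\bar p$ is in fact a homeomorphism---or equivalently, that $p$ is an open map---by exploiting the connectedness of $G$ together with the group structure of $G/K$ and the separability of $H$, possibly via a Baire-category argument within a suitable open neighborhood of $e_H$ along the lines of Proposition \ref{proposition Bourbaki}. Once $\bar p$ is known to be a homeomorphism, $G/K$ is homeomorphic to the separable space $H$, whence Lemma \ref{lemma separable} and Corollary \ref{corollary separable 1} yield the separability of $G$ and the argument is complete.
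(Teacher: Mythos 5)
Your first paragraph is correct and is essentially the counting step of the paper's own proof (indeed your version, with a symmetric $V$ satisfying $VV \cap K = \{e_G\}$ and the pairwise disjoint translates $kV$, is more carefully justified than the paper's terse choice of disjoint neighborhoods $U_x$). The factorization $p = \bar p \circ \pi$ and the observation that $\pi \colon G \to G/K$ is a genuine covering map, so that Corollary \ref{corollary separable 1} reduces the separability of $G$ to that of $G/K$, are also fine.

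The genuine gap is exactly the step you flag but do not carry out: transferring separability from $H$ to $G/K$. You propose to prove that $\bar p$ is a homeomorphism (equivalently that $p$ is open) via a Baire-category argument in the spirit of Proposition \ref{proposition Bourbaki}, but no such argument is available here: neither $G/K$ nor $H$ is assumed metrizable, Polish, or even Baire, and Proposition \ref{proposition Bourbaki} needs a Baire group together with a non-meager almost open set, none of which the hypotheses provide. Worse, the statement you are aiming for is simply false at this level of generality: let $G = \RR$ and let $H$ be the image of an irrational one-parameter winding $\RR \to \mathbb{T}^2$ with the subspace topology; then $p$ is a continuous bijective homomorphism between connected topological groups, $H$ is separable, the kernel is discrete, yet $p$ is not open and $\bar p = p$ is not a homeomorphism. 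So connectedness of $G$ plus separability of $H$ cannot by themselves yield openness of $p$, and your route stalls before the separability of $G$ (hence the countability of $K$) is established. The paper's proof does not face this comparison of $G/K$ with $H$ because it asserts outright that $p$ itself is a covering map under these hypotheses and applies Corollary \ref{corollary separable 1} directly to $p$; all that corollary needs is one neighborhood of $e_G$ mapped homeomorphically onto a neighborhood of $e_H$, i.e.\ local triviality of $p$, which is taken as part of how condition (4) of Theorem \ref{mtst_simple} is used (this is precisely the point your argument correctly identifies as nontrivial, but neither proves nor can prove by the method you suggest). To match the paper you would either have to invoke that reading of the hypothesis (discrete kernel meaning $p$ is a covering) or add an openness assumption on $p$; as written, your proposal is incomplete.
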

\begin{proof}
In this case, $p$ is a covering map. Since $H$ is separable, Corollary \ref{corollary separable 1} implies that $G$ is separable. Hence there is an open neighborhood $U_x$ of $x \in \Ker(p)$ such that $x \ne y$ implies that $U_x \cap U_y = \emptyset$. Let $C$ be a countable dense set of $G$. Let $\varphi \colon \Ker(p) \to C$ be a map such that $\varphi(x) \in C \cap U_x$. Since $x \ne y$ implies $U_x \cap U_y =\emptyset$, $\varphi$ is injective. This shows that $\Ker(p)$ is countable.
\end{proof}

\begin{corollary} \label{corollary separable 3}
If $G$ is a separable connected topological group having the universal covering, then $\pi_1(G)$ is countable.
\end{corollary}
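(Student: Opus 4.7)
The plan is to deduce this statement as a direct consequence of Corollary \ref{corollary separable 2}, applied to the universal covering projection $p \colon \widetilde{G} \to G$. All the hypotheses of that corollary are essentially built into the assumptions here: $\widetilde{G}$ is connected because it is the universal covering of a connected group (the canonical basepoint is the identity and every element of $\widetilde{G}$ is represented by a path starting there, so $\widetilde{G}$ is path-connected, hence connected), $G$ is connected and separable by hypothesis, $p$ is surjective onto $G$ since $G$ is assumed to have a universal covering in the relevant sense (so $p$ is a covering map), and the kernel $\pi_1(G)$ of $p$ is discrete, again because $p$ is a covering map.

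With these four hypotheses checked, Corollary \ref{corollary separable 2} immediately yields that $\Ker(p) = \pi_1(G)$ is countable, which is exactly the conclusion we want. The only place any care is required is in reading the phrase ``having the universal covering'': we interpret it to mean that the map $\pi \colon \widetilde{G} \to G$ defined in Subsection \ref{def of univ cov} is a genuine covering map of topological groups, which is the condition needed to guarantee the discreteness of $\ker(\pi)$ and apply Corollary \ref{corollary separable 2}. Since there is no substantive obstacle, the proof in the paper is expected to be a one-line invocation of the previous corollary.
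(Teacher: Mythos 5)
Your proposal is correct and is exactly the paper's argument: the authors also prove this by applying Corollary \ref{corollary separable 2} to the universal covering map $\tilde{G} \to G$, whose kernel is identified with $\pi_1(G)$, with the discreteness of the kernel and the connectedness hypotheses supplied by the assumption that $G$ genuinely has a universal covering. Your extra verification of the hypotheses is fine but adds nothing beyond the paper's one-line proof.
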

\begin{proof}
This immediately follows from Corollary \ref{corollary separable 2} that the kernel of the universal covering map $\tilde{G} \to G$ is naturally identified with $\pi_1(G)$.
\end{proof}

Many transformation groups are known to be locally contractible and hence have the universal coverings. Hence this corollary shows that fundamental groups of many transformation groups are countable.
%

\begin{remark}
The hypothesis in Corollary \ref{corollary separable 3} that $G$ has the universal covering cannot be eliminated. Indeed, consider the product group
\[ G = \prod_{n \in \NN} S^1.\]
Since the countable product of second countable spaces is second countable, $G$ is second countable and hence is separable. However, its fundamental group $\pi_1(G) = \ZZ^{\NN}$ is uncountable. Note that $G$ does not have the universal covering since it is not semilocally simply-connected (see Section 1.3 of \cite{Hatcher}).
\end{remark}

\subsection{Proof of Theorem \ref{mtst_simple}} \label{subsection proof of Polish groups}

The goal in this section is to prove Theorem \ref{mtst_simple}. We first prove the following:

\begin{lemma} \label{lemma open map}
Let $p \colon X \to Y$ be an open map and $B$ a subset of $Y$. Then the restriction $p_B \colon p^{-1}(B) \to B$  of $p$ to $p^{-1}(B)$ is an open map.
\end{lemma}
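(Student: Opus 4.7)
The plan is to unwind the definitions of subspace topology and openness, and verify the set-theoretic identity $p(V \cap p^{-1}(B)) = p(V) \cap B$ for an arbitrary open subset $V$ of $X$.

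First I would pick an arbitrary open subset $U$ of $p^{-1}(B)$. By the definition of the subspace topology on $p^{-1}(B)$, there exists an open subset $V$ of $X$ such that $U = V \cap p^{-1}(B)$. The goal becomes showing that $p_B(U) = p(V \cap p^{-1}(B))$ is open in $B$ with its subspace topology inherited from $Y$.

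Next I would establish the equality $p(V \cap p^{-1}(B)) = p(V) \cap B$. The inclusion $\subseteq$ is immediate: any point $y = p(x)$ with $x \in V \cap p^{-1}(B)$ lies in $p(V)$, and since $x \in p^{-1}(B)$ we have $y \in B$. The reverse inclusion $\supseteq$ is the key observation: if $y \in p(V) \cap B$, then $y = p(x)$ for some $x \in V$, and since $y \in B$ the preimage condition forces $x \in p^{-1}(B)$, so $x \in V \cap p^{-1}(B)$ and hence $y \in p(V \cap p^{-1}(B))$.

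Finally I would invoke the hypothesis that $p$ is open to conclude that $p(V)$ is open in $Y$, whence $p(V) \cap B$ is open in $B$ by the definition of the subspace topology on $B$. Combined with the previous step this shows $p_B(U)$ is open in $B$, completing the proof. There is no real obstacle here; the lemma is a direct verification from definitions, and the only point that requires any care is the set-theoretic identity, which essentially says that taking images commutes with intersecting with a subset of the codomain, provided one intersects the domain with the preimage of that subset.
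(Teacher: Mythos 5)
Your proof is correct and follows exactly the paper's argument: write $U = V \cap p^{-1}(B)$ for $V$ open in $X$, observe $p(V \cap p^{-1}(B)) = p(V) \cap B$, and use openness of $p$. The only difference is that you spell out the set-theoretic identity which the paper states without comment.
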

\begin{proof}
Let $U$ be an open subset of $p^{-1}(B)$. Then there is an open subset $V$ of $X$ such that $U = V \cap p^{-1}(B)$. Since $p$ is open, $p(V)$ is an open subset of $Y$. Therefore $p(U) = p(V \cap p^{-1}(B)) = p(V) \cap B$ is an open subset of $B$.
\end{proof}

In the proof below, we use the following notation.
Let $G$ be a group, $x$ an element of $G$, and $n$ a non-negative integer. Set
\[ U(G,x,n) = \{ x_1 \cdots x_n \mid \textrm{$x_i$ is conjugate to $x$ or $x^{-1}$}\}.\]

\begin{proof}[Proof of Theorem \ref{mtst_simple}]
We first show that $H$ is separable. Since every $H_n$ is separable, there is a countable dense subset $A_n$ of $H_n$. Then set $A = \bigcup_{n \ge 0} A_n$, and consider its closure $\overline{A}$ in $H$. Then $A$ is countable and $\overline{A}$ contain $H_n$ for every non-negative integer $n$. Since $H = \bigcup_{n \ge 0} H_n$, we have $\overline{A} = H$, and $H$ is separable.

Hence it follows from Corollaries \ref{corollary separable 1} and \ref{corollary separable 2} that $G$ is separable and $K$ is countable. Set $G_n = p^{-1}(H_n)$. Then $G_n$ is a covering space of $H_n$ with countable fiber $K$. Thus it follows from Proposition \ref{proposition Polish fiber bundle} that $G_n$ is a Polish group.

Let $N$ be a normal subgroup of $G$. Suppose that there is $x \in N$ but  $x \not\in K$.  It suffices to show $G =  N $. Let $k \in \NN$ so that $x \in G_k$. Since $H$ is simple and $p(x) \ne e_H$, we have
\[ H = \bigcup_{n=k}^\infty U(H, p(x), n).\]
Since $p(U(G,x,n)) = U(H, p(x), n)$, we have
\[ p\Big( \bigcup_{n=k}^\infty U(G,x,n) \Big) = H.\]
Hence we have
\[ G = \bigcup_{a \in K} a \cdot \Big( \bigcup_{n=k}^\infty U(G, x, n) \Big).\]
Since $K$ is countable, it follows from the Baire category theorem that for every $m \ge k$ there exist $l \ge m$ and $n \ge 0$ such that $U(G_l, x, n) \cap G_m$ is non-meager. Here we note that $U(G_l,x,n) \cap G_m$ is an analytic subset of $G_m$. Indeed, for $(\varepsilon_1, \cdots, \varepsilon_n) \in \{ -1, 0, 1\}^n$, define the map $\mu(\varepsilon_1, \cdots, \varepsilon_n)$ by
\[ \mu(\varepsilon_1, \cdots, \varepsilon_n) \colon G_l^n \to G, \quad (g_1, \cdots g_n) \mapsto g_1 x^{\varepsilon_1} g_1^{-1} \cdots g_n x^{\varepsilon_n} g_n^{-1}.\]
Then
\[ U(G_l, x, n) \cap G_m = \bigcup_{(\varepsilon_1, \cdots, \varepsilon_n) \in \{ 0, \pm1\}^n}\mu(\varepsilon_1, \cdots, \varepsilon_n) \big( \mu(\varepsilon_1, \cdots, \varepsilon_n)^{-1} (G_m) \big)\]
Since $G_m$ is a Polish subspace of $G_l$, Lemma \ref{lemma Polish facts} implies that $G_m$ is a $G_\delta$-set of $G_l$. Therefore $\mu(\varepsilon_1, \cdots, \varepsilon_n)^{-1}(G_m)$ is a $G_\delta$-set of $G_l^n$, and Lemma \ref{lemma Polish facts} again implies that $\mu(\varepsilon_1, \cdots, \varepsilon_n)^{-1}(G_m)$ is a Polish space. This means that $U(G_l, x,n) \cap G_m$ is an analytic set of $G_m$ and hence is almost open.


Hence we have showed that for every $m$ there exists $l \ge m$ and $n_0$ such that $U(G_l, x,  n_0 ) \cap G_m$ is non-meager in $G_m$ and analytic. Since
\[ \big( U(G_l, x, n_0) \cap G_m \big) \cdot \big( U(G_l, x, n_0) \cap G_m \big)^{-1} \subset U(G_l, x, 2n_0) \cap G_m,\]
Proposition \ref{proposition Bourbaki} shows that $U(G_l, x, 2n_0) \cap G_m$ is a neighborhood of $e_G$ in $G_m$. Hence
\[ \bigcup_{n\in \NN} U(G_l, x, n) \cap G_m \]
is a subgroup of $G_m$ containing a neighborhood of $e_G$ in $G_m$. Hence
\[ \bigcup_{n \in \NN} U(G, x, n) \cap G_m\]
is also a subgroup of $G_m$ containing a neighborhood of $e_G$ in $G_m$, and is an open subgroup of $G_m$.

Let $y \in G$. it follows from the condition (2) of Theorem \ref{mtst_simple} that there is $m$ such that $y$ is contained in the identity component $(G_m)_0$ of $G_m$. Since every open subgroup of a topological group contains the identity component of the whole group, we have that $y$ is contained in $U(G,x,n)$ for some $n$. Hence we have showed
\[ G = \bigcup_{n \in \NN} U(G,x,n).\]

Since $N$ is normal and $x \in N$, we have $U(G,x,n) \subset N$ for every $n$. Hence we have $G = N$, which completes the proof.
\end{proof}




\section{Relative simplicity of the universal covering of other transformation groups} \label{relative_simple_other}

In this section, we will use the Whitney $C^\infty$-topology on $\mathrm{Diff}^c(M)$ and its subgroups.

Similarly to Theorem \ref{closed_rel_simpl},
Theorem \ref{smooth_rel_simpl} can also be proved by  some modifications of a classical argument (for example, see \cite[Sections 2]{Ban97}). 
However, here we provide a proof using Theorem \ref{mtst_simple}.

\begin{proof}[Proof of Theorem \ref{smooth_rel_simpl}]
It is known that $\mathrm{Diff}_0^c(M)$ is simple (\cite{T74}, \cite{M74}, \cite{M75}, see also \cite{E84}, \cite[Theorem 2.1.1]{Ban97}). 

We also note that $H = \mathrm{Diff}_0^c(M)$ is known to be locally smoothly contractible (for example, see \cite[Prosition 1.2.1]{Ban97}) and so $\pi\colon \widetilde{\mathrm{Diff}_0^c}(M)\to \mathrm{Diff}_0^c(M)$ is genuinely a covering map and $G=\widetilde{\mathrm{Diff}_0^c}(M)$ is connected.
Hence, it is sufficient to prove the condition (2) of Theorem \ref{mtst_simple}.

For a compact subset $K$ of $M$,
let $C^{\infty}_K(M, M)$ be the space of $C^\infty$-maps from $M$ to $M$ whose supports are contained in $K$,
and let $\mathrm{Diff}_K(M)$ be the space of diffeomorphisms of $M$ whose supports are contained in $K$.
Recall that $C^\infty(M,M)$ is Polish (see next two paragraphs of \cite[Theorem 2.2.4]{Hirsch}). Since $C^\infty_K(M,M)$ is a closed subspace of $C^\infty(M,M)$, $C^\infty_K(M,M)$ is Polish.
Since $\mathrm{Diff}_K(M)$ is an open subset of $C^\infty_K(M,M)$ (see \cite[Theorem 2.1.7]{Hirsch}), by Lemma \ref{lemma Polish facts}, $\mathrm{Diff}_K(M)$ is a Polish space.

Let $(K_n)_n$ be an increasing sequence of compact subsets of $M$ such that
\[ M = \bigcup_n K_n.\]
and $K_{n-1}$ is contained in the interior of $K_n$.
Since $\mathrm{Diff}^c_0(M)$ is a closed subset of $\mathrm{Diff}^c(M)$, $H_n = \mathrm{Diff}_{K_n}(M) \cap \mathrm{Diff}^c_0(M)$ is a closed subgroup of the Polish group $\mathrm{Diff}_{K_n}(M)$. Let $p \colon \widetilde{\mathrm{Diff}^c_0}(M) \to \mathrm{Diff}^c_0(M)$ be the projection and set $G_n = p^{-1}(H_n)$. 
Let $\varphi \in G$.
Then  we see  that there is a path joining the identity to $\varphi$ which is contained in $G_m$ for some $m$. Thus Theorem~\ref{mtst_simple} completes the proof.
%
%
%
%
\end{proof}


We have a result similar to Theorems \ref{closed_rel_simpl} and \ref{smooth_rel_simpl} for the group for volume-preserving diffeomorphisms.
Before explaining that, we prepare some notations.

Let $M$ be an $m$-dimensional smooth manifold and $\Omega$ a volume form on $M$.
Let $\mathrm{Diff}^c(M,\Omega)$ denote the group of diffeomorphisms with compact support preserving the volume form $\Omega$.
Let $\mathrm{Diff}_0^c(M,\Omega)$ denote the identity component of $\mathrm{Diff}^c(M,\Omega)$ and
$\widetilde{\mathrm{Diff}}_0^c(M,\Omega)$ denote the universal cover of $\mathrm{Diff}_0^c(M,\Omega)$.
Then the {\it $($volume$)$ flux homomorphism} $\tFlux_\Omega \colon \mathrm{Diff}^c(M,\Omega) \to H^{m-1}(M;\RR)$ is defined by 
\[\tFlux_\Omega ([\{ \psi^t \}_{t \in [0,1]}]) = \int_0^1 [\iota_{X_t} \Omega] dt,\]
where $\iota$ is the interior product, $\{ \psi^t \}_{t \in [0,1]}$ is a path representing an element of $\mathrm{Diff}^c(M,\Omega)$ and $X_t$ is the time-dependent vector field generating the isotopy $\{ \psi^t \}_{t \in [0,1]}$.
It is known that the value $\tFlux_\Omega ([\{ \psi^t \}_{t \in [0,1]}])$ does not depend on the choice of the isotopy $\{ \psi^t \}_{t \in [0,1]}$ and thus the map $\tFlux_\Omega$ is a well-defined homomorphism (for example, see \cite[Theorem 3.1.1]{Ban97}).

Set $\Gamma_\Omega=\tFlux_\Omega(\pi_1(\Diff_0(M,\Omega)))$, which is called the \textit{flux group}.
Then, the flux homomorphism $\tFlux_\Omega \colon \mathrm{Diff}_0^c(M,\Omega) \to H^{m-1}(M;\RR)$ induces the homomorphisms $\Flux_\Omega \colon \mathrm{Diff}_0^c(M,\Omega) \to H^{m-1}(M;\RR)/\Gamma_\Omega$, which is also called the flux homomorphism.

\begin{thm}\label{vol_rel_simpl}
Let $M$ be a smooth manifold and $\Omega$ a volume form on $M$.
Let $G_\Omega = \Ker(\Flux_\Omega)$ and $N = \pi_1\left(G_\Omega\right)$.
Then, the universal covering $\tilde{G}_\Omega$ of $G_\Omega$ is simple relative to $N$.
\end{thm}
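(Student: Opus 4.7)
The approach is parallel to the proof of Theorem~\ref{smooth_rel_simpl}: apply Theorem~\ref{mtst_simple} to the universal covering map $p\colon \tilde{G}_\Omega \to G_\Omega$ with kernel $N = \pi_1(G_\Omega)$, and verify its four hypotheses. Condition (1), simplicity of $G_\Omega$, is Thurston's theorem on the simplicity of the kernel of the volume flux (see \cite[Chapter 5]{Ban97}). Condition (3), connectedness of $\tilde{G}_\Omega$, is immediate from the definition of universal covering given in Subsection~\ref{def of univ cov}. For condition (4), local smooth contractibility of $G_\Omega$ (by standard volume-preserving tubular neighborhood arguments as in \cite[Chapter 3]{Ban97}) ensures that $p$ is a genuine covering map, so $N$ is discrete.

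The bulk of the proof is condition (2). Let $(K_n)_n$ be an increasing sequence of compact subsets of $M$ with $M = \bigcup_n K_n$ and $K_{n-1}$ contained in the interior of $K_n$, exactly as in the proof of Theorem~\ref{smooth_rel_simpl}. Define
\[
H_n \;=\; G_\Omega \cap \mathrm{Diff}_{K_n}(M,\Omega),
\]
which is a subgroup of $G_\Omega$ because supports add under composition and are preserved under inversion, and whose union over $n$ equals $G_\Omega$. The identity-component lifting condition follows verbatim from the argument for Theorem~\ref{smooth_rel_simpl}: given $[\gamma] \in \tilde{G}_\Omega$, the smooth isotopy $\gamma$ has support contained in a single compact subset of $M$ by the definition in Subsection~\ref{def of univ cov}, hence in some $K_n$, so the whole isotopy lies in $H_n$ and $[\gamma] \in p^{-1}(H_n)_0$.

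The main obstacle is to prove that $H_n$ is Polish. The space $\mathrm{Diff}_{K_n}(M,\Omega)$ is Polish since it is the closed subset of the Polish space $\mathrm{Diff}_{K_n}(M)$ (shown Polish in the proof of Theorem~\ref{smooth_rel_simpl}) cut out by the closed condition $\varphi^*\Omega = \Omega$. Since the identity component of a topological group is closed, $\mathrm{Diff}_{K_n,0}(M,\Omega) := \mathrm{Diff}_{K_n}(M,\Omega) \cap \mathrm{Diff}_0^c(M,\Omega)$ is Polish as well. The delicate step is passing to $H_n = \mathrm{Diff}_{K_n,0}(M,\Omega) \cap \Ker(\Flux_\Omega)$, because the target $H^{m-1}(M;\RR)/\Gamma_\Omega$ of $\Flux_\Omega$ need not be Hausdorff (the discreteness of $\Gamma_\Omega$ being a subtle question related to the flux conjecture). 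The plan is to work with the lifted flux $\tFlux_\Omega$ on the universal cover of $\mathrm{Diff}_{K_n,0}(M,\Omega)$: once separability is established as in the proof of Theorem~\ref{mtst_simple}, Corollary~\ref{corollary separable 3} shows that $\pi_1(\mathrm{Diff}_0^c(M,\Omega))$ and hence $\Gamma_\Omega$ is countable, so $H_n$ is the image of the countable union of closed fibers $\tFlux_\Omega^{-1}(\Gamma_\Omega)$ under the covering projection. A descriptive set theoretic argument combining Lemma~\ref{lemma Polish facts} with Proposition~\ref{proposition Polish fiber bundle} then yields the Polish property of $H_n$, completing the verification of (2) and hence the proof via Theorem~\ref{mtst_simple}.
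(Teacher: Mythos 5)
Your overall architecture matches the paper's proof: apply Theorem \ref{mtst_simple} to $\tilde{G}_\Omega \to G_\Omega$, citing Thurston for simplicity of $G_\Omega$, using local smooth contractibility (Moser's trick) to get a genuine covering with discrete kernel, and verifying condition (2) with the exhausting subgroups $H_n = G_\Omega \cap \mathrm{Diff}_{K_n}(M,\Omega)$ together with the same support-in-a-compact-set lifting argument. Where you diverge is the Polishness of $H_n$, and there your argument has a genuine gap. The paper's route is direct: $\mathrm{Diff}_{K_n}(M)\cap\mathrm{Diff}_0^c(M,\Omega)$ is a closed subset of the Polish group $\mathrm{Diff}_{K_n}(M)$, and the continuity of the flux homomorphism makes $G_{\Omega,K_n}$ a \emph{closed} subgroup of it, hence Polish. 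You instead doubt closedness (because $H^{m-1}(M;\RR)/\Gamma_\Omega$ need not be Hausdorff) and propose to recover Polishness by descriptive set theory: countability of $\Gamma_\Omega$, writing $H_n$ as the image under the covering projection of $\tFlux_\Omega^{-1}(\Gamma_\Omega)$, and then ``combining Lemma \ref{lemma Polish facts} with Proposition \ref{proposition Polish fiber bundle}.'' That last sentence is not an argument, and no argument of this shape can close the gap: a subgroup of a Polish group is Polish in the subspace topology if and only if it is a $G_\delta$-set (Lemma \ref{lemma Polish facts}), and a standard Baire-category translation argument shows that a $G_\delta$ (equivalently, Polish) subgroup of a Polish group is automatically closed. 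So there is no intermediate outcome ``analytic/Borel but non-closed, yet Polish'': what your construction yields is at best that $H_n$ is an analytic (indeed Borel) subgroup, which is strictly weaker than $G_\delta$ and does not give condition (2). Moreover, Proposition \ref{proposition Polish fiber bundle} is not applicable here; in the paper it is used \emph{inside} the proof of Theorem \ref{mtst_simple} to show that $p^{-1}(H_n)$ is Polish once $H_n$ is known to be Polish with countable fiber, not to establish the hypothesis itself.

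Concretely, to verify condition (2) with your choice of $H_n$ you must prove that $\Ker(\Flux_\Omega)\cap\mathrm{Diff}_{K_n}(M)$ is closed in $\mathrm{Diff}_{K_n}(M)\cap\mathrm{Diff}_0^c(M,\Omega)$; the Hausdorffness worry you raise has to be confronted head-on (this is exactly the point the paper settles by invoking the continuity of the flux homomorphism), not routed around via analyticity. A secondary caution: your appeal to Corollary \ref{corollary separable 3} to get countability of $\Gamma_\Omega$ presupposes separability of $\mathrm{Diff}_0^c(M,\Omega)$ ``as in the proof of Theorem \ref{mtst_simple}''; in that proof separability of $H$ is \emph{deduced from} condition (2), so as written this is circular, although it is repairable since $\mathrm{Diff}_{K_n}(M)\cap\mathrm{Diff}_0^c(M,\Omega)$ is Polish without any reference to the flux. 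As it stands, however, the Polishness of $H_n$ — the one step that actually needs proof beyond the template of Theorem \ref{smooth_rel_simpl} — is not established.
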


Similarly to Theorem \ref{closed_rel_simpl}, Theorem \ref{vol_rel_simpl} can also be proved by a minor modification of a classical argument (for example, see \cite[Sections 5]{Ban97}). 
However, here we provide a proof using Theorem \ref{mtst_simple}.

\begin{proof}[Proof of Theorem \ref{vol_rel_simpl}]

It is known that $G_\Omega$ is simple (\cite{T_un}, see also \cite[Theorem 5.1.3]{Ban97}). 

We also note that $G_\Omega$ is known to be locally smoothly contractible.
Indeed, by Moser's trick, it is known that
$\mathrm{Diff}_0^c(M,\Omega)$ is locally smoothly contractible
(for example, see \cite[Corollary 1.5.4]{Ban97}). 
Then, by an argument similar to the proof of  \cite[Corollary 6.3]{Fa}, we see that
$G_\Omega$ is also locally smoothly contractible and $\pi \colon \tilde{G}_\Omega\to G_\Omega$ is genuinely a covering map.
We also have $G_\Omega$ is connected.
Hence, it is sufficient to prove the condition (2) of Theorem \ref{mtst_simple}

For a compact subset $K$ of $M$, let $G_{\Omega,K} = G_\Omega \cap \mathrm{Diff}_K(M)$.
For a compact subset $K$ of $M$, recall that we have defined $\mathrm{Diff}_K(M)$ and proved that  $\mathrm{Diff}_K(M)$ is a Polish group in the proof of Theorem \ref{smooth_rel_simpl}. 

Here, we state that $G_{\Omega,K}$ is a Polish group.
Indeed, since $\mathrm{Diff}^c(M,\Omega)$ is a closed subset of $\mathrm{Diff}^c(M)$, $\mathrm{Diff}_K(M)\cap \mathrm{Diff}^c(M,\Omega)$ is a closed subset of $\mathrm{Diff}_K(M)$.
Since $\mathrm{Diff}_K(M)$ is a Polish space, $\mathrm{Diff}_K(M)\cap \mathrm{Diff}^c(M,\Omega)$ is a Polish space.
Since $\mathrm{Diff}^c_0(M,\Omega)$ is a closed subset of $\mathrm{Diff}^c(M,\Omega)$, $\mathrm{Diff}_K(M)\cap \mathrm{Diff}_0^c(M,\Omega)$ is a closed subset of $\mathrm{Diff}_K(M)\cap \mathrm{Diff}^c(M,\Omega)$.
Since $\mathrm{Diff}_K(M)\cap \mathrm{Diff}^c(M,\Omega)$ is a Polish space, $\mathrm{Diff}_K(M)\cap \mathrm{Diff}_0^c(M,\Omega)$ is a Polish space.
Since the flux homomorphism is known to be continuous, $G_{\Omega,K}$ is a closed subset of $\mathrm{Diff}_K(M)\cap \mathrm{Diff}_0^c(M,\Omega)$.
Since $\mathrm{Diff}_K(M)\cap \mathrm{Diff}_0^c(M,\Omega)$ is a Polish space, $G_{\Omega,K}$ is a Polish group.

Let $(K_n)_n$ be an increasing sequence of compact subsets of $M$ such that
\[ M = \bigcup_n K_n\]
and $K_{n-1}$ is contained in the interior of $K_n$. Set $H_n = \Diff_{K_n}(M, \Omega)$. Then by a similar consideration to the proof of Theorem~\ref{smooth_rel_simpl}, we can check that the condition (2) is satisfied and we conclude that Theorem~\ref{mtst_simple} completes the proof.
%
%
%
%
\end{proof}

\begin{remark}\label{rem_ono}
It is a natural question whether one can prove Theorem \ref{closed_rel_simpl} by using Theorem \ref{mtst_simple}.
Actually, it is possible.
However, we have to use the flux conjecture in order to prove that $\Ham(M,\omega)$ is locally path-connected.
The flux conjecture had been a long-standing open problem in symplectic geometry 
and finally proved by Ono \cite{On06} in a highly non-trivial way using the Floer theory.
\end{remark}

Here, we explain  what is $G/N$ in the setting of Theorem \ref{open_rel_simpl}.

Banyaga proved the following theorem.

\begin{thm}[{\cite[Th\'{e}or\`{e}me II.6.2]{Ban}}]\label{banyaga_open}
Let $(M, \omega)$ be a connected open symplectic manifold. 
Then the kernel $\mathrm{Ker}(\underline{\Cal}_M)$ of the homomorphism $\underline\Cal_M\colon \Ham(M,\omega)\to\RR/\Lambda_\omega$ is a simple group.
\end{thm}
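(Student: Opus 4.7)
The proof will follow the same strategy as our Theorem \ref{open_rel_simpl}, but carried out entirely at the level of $\Ham(M,\omega)$ rather than in the universal cover $\tHam(M,\omega)$. Let $G = \Ker(\underline{\Cal}_M)$, and let $N$ be a non-trivial normal subgroup of $G$. Pick $h \in N$ with $h \ne \Id$. My plan is to show that every $f \in G$ can be written as a product of conjugates of $h$ and $h^{-1}$, which forces $N = G$ and establishes the simplicity of $G$.

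First I would locate a non-empty open subset $U \subset M$, symplectomorphic to a standard ball in $\RR^{2n}$, that is displaced by $h$. Since $h \ne \Id$ there is a point $x \in M$ with $h(x) \ne x$, and since $M$ is a (Hausdorff) manifold one can shrink a Darboux chart around $x$ to obtain such a $U$ with $\overline{U} \cap h(\overline{U}) = \emptyset$. Next I would apply the downstairs Banyaga fragmentation lemma (the $\Ham$-analog of Lemma \ref{frag_lemma_open}) to write an arbitrary $f \in \Ker(\underline{\Cal}_M)$ as a product of $\Ham(M,\omega)$-conjugates of elements in $\Ker(\Cal_U) \subset \Ham_U(M,\omega)$.

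Then I would invoke the perfectness of $\Ker(\Cal_U)$ for the ball $U$, which is the $\Ham$-analog of Theorem \ref{symp_perfect} and is one of Banyaga's original results in \cite{Ban}. This expresses each such factor as a product of commutators of elements of $\Ham_U(M,\omega)$. Finally, Lemma \ref{disp_lem}, whose proof is purely algebraic and goes through verbatim downstairs, shows that each such commutator is a product of symmetrized conjugates of the displacing element $h$. Chaining these three decompositions, $f$ lies in the normal closure of $h$ in $\Ham(M,\omega)$, and hence in $N$; thus $N = G$, and $G$ is simple.

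The main obstacle is establishing the two ingredients at the $\Ham$-level: the fragmentation lemma for elements of $\Ker(\underline{\Cal}_M)$ into pieces supported in balls with vanishing Calabi invariant, and the perfectness of $\Ker(\Cal_U)$ for a ball $U$. Both are in fact the classical Banyaga assertions from which the universal-cover versions used earlier in this paper were deduced, so no new work is needed: the proofs of Lemma \ref{frag_lemma_open} and Theorem \ref{symp_perfect} descend, with only notational changes, to the non-universal-cover setting.
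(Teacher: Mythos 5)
The paper offers no proof of Theorem \ref{banyaga_open}: it is quoted verbatim from Banyaga \cite[Th\'{e}or\`{e}me II.6.2]{Ban}, so there is no in-paper argument to compare with. Your reconstruction follows the expected Banyaga-style template (displace a Darboux ball, fragment, use perfectness of the Calabi kernel on the ball, then the commutator/displacement trick of Lemma \ref{disp_lem}), which is also the template the paper uses for Theorems \ref{closed_rel_simpl} and \ref{open_rel_simpl}.

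However, as written your argument has a genuine gap in its last step. You take $N$ to be a normal subgroup of $G=\Ker(\underline{\Cal}_M)$ \emph{only}, yet you fragment $f$ using ``$\Ham(M,\omega)$-conjugates'' and conclude that $f$ lies in the normal closure of $h$ in $\Ham(M,\omega)$, ``and hence in $N$''. That inference is false in general: $N$ is not assumed normal in $\Ham(M,\omega)$, so membership in the $\Ham(M,\omega)$-normal closure of $h$ says nothing about membership in $N$. To close the argument every conjugator must be kept inside $G$ itself. This is precisely why Lemma \ref{frag_lemma_open} is formulated with conjugators $h_i\in\Ker(\Cal_M)$ (its downstairs analogue, which is what Banyaga proves, has conjugators in $\Ker(\underline{\Cal}_M)$; note also that an element of $\Ker(\underline{\Cal}_M)$ has a lift with Calabi invariant in $\Lambda_\omega$, not necessarily $0$, so one corrects the lift by a loop in $\pi_1(\Ham(M,\omega))$ before fragmenting). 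Likewise, in the perfectness step you should write each ball-supported factor as a product of commutators of elements of $\Ker(\underline{\Cal}_U)$ --- not of arbitrary elements of $\Ham_U(M,\omega)$, as you state --- so that the conjugators $fg$, $f$, $g$ appearing in Lemma \ref{disp_lem} have vanishing Calabi invariant; since $U$ is a ball, $\omega_0$ is exact, $\Lambda_{\omega_0}=\{0\}$, and the image of $\Ker(\underline{\Cal}_U)$ in $\Ham(M,\omega)$ lies in $\Ker(\underline{\Cal}_M)$. With these corrections every factor in your chain is a conjugate of $h^{\pm1}$ by an element of $G$, so $f$ lies in the normal closure of $h$ in $G$, hence in $N$, and the proof goes through.
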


\begin{prop}\label{open_rel}
Let $G$ and $N$ be groups in Theorem \ref{open_rel_simpl}.
Then, the group $G/N$ is isomorphic to the group $\mathrm{Ker}(\underline{\Cal}_M)$.
\end{prop}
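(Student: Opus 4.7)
The plan is to apply the first isomorphism theorem to the restriction of the universal covering projection $\pi \colon \widetilde{\mathrm{Ham}}(M,\omega) \to \mathrm{Ham}(M,\omega)$ to the subgroup $G = \mathrm{Ker}(\mathrm{Cal}_M)$. Recall from Subsection \ref{symp basis} that by definition $\Lambda_\omega = \mathrm{Cal}_M(\pi_1(\mathrm{Ham}(M,\omega)))$, and we have the commutative square
\[
\begin{CD}
\widetilde{\mathrm{Ham}}(M,\omega) @>{\mathrm{Cal}_M}>> \RR \\
@V{\pi}VV @VV{\mathrm{pr}}V \\
\mathrm{Ham}(M,\omega) @>{\underline{\mathrm{Cal}}_M}>> \RR/\Lambda_\omega,
\end{CD}
\]
where $\mathrm{pr}$ denotes the natural projection.

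First I will identify the kernel of $\pi|_G$. Since $\mathrm{Ker}(\pi) = \pi_1(\mathrm{Ham}(M,\omega))$, we immediately obtain
\[
\mathrm{Ker}(\pi|_G) = \mathrm{Ker}(\pi) \cap G = \pi_1(\mathrm{Ham}(M,\omega)) \cap G = N.
\]

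Next I will show that $\pi(G) = \mathrm{Ker}(\underline{\mathrm{Cal}}_M)$. The inclusion $\pi(G) \subseteq \mathrm{Ker}(\underline{\mathrm{Cal}}_M)$ follows at once from the commutative square above: if $f \in G$, then $\underline{\mathrm{Cal}}_M(\pi(f)) = \mathrm{pr}(\mathrm{Cal}_M(f)) = \mathrm{pr}(0) = 0$. For the reverse inclusion, let $g \in \mathrm{Ker}(\underline{\mathrm{Cal}}_M)$ and pick any lift $\tilde{g} \in \widetilde{\mathrm{Ham}}(M,\omega)$ with $\pi(\tilde{g}) = g$. Then $\mathrm{Cal}_M(\tilde{g}) \in \Lambda_\omega$, so by the definition of $\Lambda_\omega$ there exists $\gamma \in \pi_1(\mathrm{Ham}(M,\omega))$ with $\mathrm{Cal}_M(\gamma) = \mathrm{Cal}_M(\tilde{g})$. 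Setting $\tilde{g}' = \tilde{g}\gamma^{-1}$, we have $\mathrm{Cal}_M(\tilde{g}') = 0$, i.e.\ $\tilde{g}' \in G$, and $\pi(\tilde{g}') = \pi(\tilde{g}) = g$.

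Combining these two facts with the first isomorphism theorem yields $G/N \cong \pi(G) = \mathrm{Ker}(\underline{\mathrm{Cal}}_M)$, which is the desired conclusion. There is no substantive obstacle here: the argument is a direct diagram chase, and the only slightly non-formal step is the surjectivity onto $\mathrm{Ker}(\underline{\mathrm{Cal}}_M)$, which is resolved by adjusting a lift by an appropriate element of the fundamental group using the very definition of $\Lambda_\omega$.
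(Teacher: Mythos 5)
Your argument is correct and follows essentially the same route as the paper: restricting the covering projection $\pi$ to $G=\Ker(\Cal_M)$, identifying the kernel as $N$, and checking surjectivity onto $\Ker(\underline{\Cal}_M)$, which the paper leaves as "easily confirmed" and which you justify exactly as intended, by adjusting a lift by an element of $\pi_1(\Ham(M,\omega))$ with the same Calabi value.
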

\begin{proof}
The universal covering $\tHam(M,\omega) \to \Ham(M,\omega)$ induces the homomorphism $\mathrm{Ker}(\Cal_M) \to \mathrm{Ker}(\underline{\Cal}_M)$.
We can easily confirm that this map is surjective and its kernel is $N$.
\end{proof}

\begin{remark}\label{rem_kislev}
It is a natural question whether one prove Theorem \ref{open_rel_simpl} by using Theorems \ref{banyaga_open} and \ref{mtst_simple}.

Kislev \cite{Ki14} provided an example of an open symplectic manifold $(M,\omega)$ such that $\Lambda_\omega$ is a dense subset of $\RR$.
Then, $\mathrm{Ker}(\underline{\Cal}_M)$ is not locally path-connected and thus we cannot apply Theorem \ref{mtst_simple} in order to prove Theorem \ref{open_rel_simpl}.
\end{remark}
%
%

\section{Autonomous norm is well-defined} \label{well-def_aut}

In this section, we will prove Theorem \ref{auto_thm}.

Let $M$ be a smooth manifold and $G$ a subgroup of $\Diff(M)$.
An element $g$ of the universal covering $\tilde{G}$ of $G$ is called \textit{autonomous} if there exists a vector field $X$ with compact support such that
\begin{itemize}
    \item $\varphi_X^t \in G$ for every $t\in\RR$,
    \item $g=[\{\varphi_X^t\}_{t\in[0,1]}]$ in $\tilde{G}$.
\end{itemize}
Here, $\{\varphi_X^t\}_{t\in\RR}$ is a flow generated by $X$.

In this section, we often use the following lemma.
\begin{lem}\label{auton_lem}
    Let $g$ be an autonomous element of $\tilde{G}$ and let $h\in \tilde{G}$.
    Then, $g^{-1}$ and $hgh^{-1}$ are also autonomous elements of $\tilde{G}$.
\end{lem}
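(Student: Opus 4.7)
The plan is to construct explicit autonomous vector fields for each case and verify that the resulting flows represent the desired elements of $\tilde{G}$, using the pointwise group structure implicit in Section \ref{def of univ cov} (where $\pi$ is a homomorphism attaining the endpoint of the isotopy).

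First I would handle $g^{-1}$ by taking the vector field $-X$. Its flow is $\varphi_{-X}^t = (\varphi_X^t)^{-1}$, which has compact support and lies in $G$ by hypothesis on $X$. Since the inverse in $\tilde{G}$ is represented pointwise, $g^{-1}$ is represented by the isotopy $\{\varphi_{-X}^t\}_{t\in[0,1]}$, hence is autonomous.

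For $hgh^{-1}$, I would pick a smooth isotopy $\gamma$ representing $h$ and set $\phi = \gamma(1) = \pi(h)$. Let $Y = \phi_* X$; then $Y$ has compact support $\phi(\supp X)$, its flow is $\varphi_Y^t = \phi \circ \varphi_X^t \circ \phi^{-1}$, and each $\varphi_Y^t$ lies in $G$ since $G$ is a subgroup containing both $\phi$ and $\varphi_X^t$. Thus $[\{\varphi_Y^t\}_{t\in[0,1]}]$ is autonomous, and it remains to identify it with $hgh^{-1}$. By pointwise multiplication, $hgh^{-1}$ is represented by $\eta(t) = \gamma(t)\varphi_X^t\gamma(t)^{-1}$, which agrees with $\varphi_Y^t = \phi\varphi_X^t\phi^{-1}$ only at the endpoints. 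To interpolate I would use the homotopy
\[
\Gamma(s,t) = \gamma(s+(1-s)t)\circ\varphi_X^t\circ\gamma(s+(1-s)t)^{-1},
\]
noting that $\Gamma(0,t) = \eta(t)$, $\Gamma(1,t) = \phi\varphi_X^t\phi^{-1}$, $\Gamma(s,0) = \mathrm{Id}$, and $\Gamma(s,1) = \phi\varphi_X^1\phi^{-1}$, so $\Gamma$ satisfies the required conditions of a smooth homotopy in $G$ relative to fixed endpoints.

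I do not anticipate a serious obstacle; the remaining verifications---joint smoothness of $(s,t,x)\mapsto\Gamma(s,t)(x)$ and the existence of a single compact set containing every $\supp\Gamma(s,t)$---are straightforward consequences of the compact-support conditions built into the definition of a smooth isotopy together with the compact support of $X$. The only point of care is to confirm that the group operation on $\tilde{G}$ introduced in Section \ref{def of univ cov} is pointwise multiplication, so that conjugation really is pointwise conjugation of isotopies; this is forced by the requirement that $\pi$ be a homomorphism.
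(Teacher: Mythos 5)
Your proof is correct and takes essentially the same route as the paper's, which simply asserts that $-X$ and $\pi(h)_*X$ generate $g^{-1}$ and $hgh^{-1}$ respectively. Your explicit homotopy $\Gamma(s,t)=\gamma(s+(1-s)t)\circ\varphi_X^t\circ\gamma(s+(1-s)t)^{-1}$ merely supplies the verification, left implicit in the paper, that the flow of the pushed-forward field represents the class $hgh^{-1}$ in $\tilde{G}$.
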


\begin{proof}
    Let $X$ be a vector field generating $g$.
    Then, the vector fields $-X$, $h_*X$ generate $g^{-1}$, $hgh^{-1}$, respectively.
    Thus, $g^{-1}$ and $hgh^{-1}$ are also autonomous elements of $\tilde{G}$.
\end{proof}

\begin{proof}[Proof of Theorem \ref{auto_thm} when $M$ is closed]
We can easily take an autonomous element $g$ of $\tHam(M,\omega)$ such that $g\notin \pi_1(\Ham(M,\omega))$.
Then, by Theorem \ref{closed_rel_simpl}, every $f \in \tHam(M,\Omega)$ can be represented as a product of conjugates of $g$ or $g^{-1}$.
By Lemma \ref{auton_lem}, conjugates of $g$ or $g^{-1}$ are autonomous.
\end{proof}

\begin{proof}[Proof of Theorem \ref{auto_thm} when $M$ is open]
Take $f \in \tHam(M,\omega)$ and set $a=\Cal_M(f)$.
Then, one can easily take a Hamiltonian $H\colon M\to \RR$ such that $\Cal_M(\tilde\varphi_H)=a$ where $\tilde\varphi_H = [\{\varphi_H^t\}_{t\in[0,1]}] \in \tHam(M,\omega)$.
Then, $f\circ(\tilde\varphi_H)^{-1} \in \Ker(\Cal_M)$.

We can easily take an autonomous element $g$ of $\Ker(\Cal_M)$ such that $g\notin \pi_1(\Ham(M,\omega))$.
Then, by Theorem \ref{open_rel_simpl}, $f\circ(\tilde\varphi_H)^{-1} \in \Ker(\Cal_M)$ can be represented as a product of conjugates of $g$ or $g^{-1}$.
By Lemma \ref{auton_lem}, conjugates of $g$ or $g^{-1}$ are autonomous.

Since $\tilde\varphi_H$ is autonomous, $f$ can be represented as a product of autonomous elements of $\widetilde{\mathrm{Diff}_0^c}(M,\Omega)$.
\end{proof}

Moreover, we can prove the following theorems.

\begin{thm}\label{smooth_auto}
    Let $M$ be a smooth manifold.
Every $f \in \widetilde{\mathrm{Diff}_0^c}(M)$ can be represented as a product of autonomous elements of $\widetilde{\mathrm{Diff}_0^c}(M)$.
\end{thm}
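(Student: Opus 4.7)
The plan is to follow exactly the same template as the proof of Theorem \ref{auto_thm} in the closed symplectic case, replacing Theorem \ref{closed_rel_simpl} by Theorem \ref{smooth_rel_simpl}. Concretely, I would first exhibit a concrete autonomous element $g \in \widetilde{\mathrm{Diff}_0^c}(M)$ whose image in $\mathrm{Diff}_0^c(M)$ is non-trivial, so that $g \notin \pi_1(\mathrm{Diff}_0^c(M))$. For this, pick any compactly supported smooth vector field $X$ on $M$ whose time-one map $\varphi_X^1$ is not the identity, which is routine to produce by multiplying a coordinate vector field on a small chart by a suitable bump function, and set $g = [\{\varphi_X^t\}_{t \in [0,1]}] \in \widetilde{\mathrm{Diff}_0^c}(M)$. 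This $g$ is autonomous by definition, and $\pi(g) = \varphi_X^1 \neq \mathrm{Id}$ guarantees $g \notin \pi_1(\mathrm{Diff}_0^c(M))$. (If $\dim M = 0$, the statement is vacuous since $\mathrm{Diff}_0^c(M)$ is trivial.)

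Next, by Theorem \ref{smooth_rel_simpl}, $\widetilde{\mathrm{Diff}_0^c}(M)$ is simple relative to $\pi_1(\mathrm{Diff}_0^c(M))$. Applying Lemma \ref{rel simpl equiv} with the element $g$ constructed above, any $f \in \widetilde{\mathrm{Diff}_0^c}(M)$ can therefore be written as a product of conjugates of $g$ or of $g^{-1}$. Finally, Lemma \ref{auton_lem} tells us that any conjugate of an autonomous element, as well as its inverse, remains autonomous; hence each factor in this product is autonomous, completing the proof.

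I do not expect any genuine obstacle: both nontrivial ingredients (relative simplicity of the universal cover and stability of autonomy under conjugation and inversion) are already established earlier in the paper, and the only other input, namely the existence of a compactly supported vector field with nontrivial time-one flow, is elementary. The proof is therefore expected to be a short paragraph essentially identical in structure to the closed symplectic case of Theorem \ref{auto_thm}.
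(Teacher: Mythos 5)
Your proposal is correct and follows exactly the route the paper intends: the authors omit the proof of Theorem \ref{smooth_auto}, stating that it is obtained by running the argument of Theorem \ref{auto_thm} (closed case) with Theorem \ref{smooth_rel_simpl} in place of Theorem \ref{closed_rel_simpl}, which is precisely your combination of a concrete autonomous $g \notin \pi_1(\mathrm{Diff}_0^c(M))$, Lemma \ref{rel simpl equiv}, and Lemma \ref{auton_lem}. Your explicit construction of $g$ via a bump-function vector field just fills in the paper's ``we can easily take'' step.
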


\begin{thm}\label{vol_auto}
Let $M$ be an $n$-dimensional smooth manifold and $\Omega$ a volume form on $M$.
Every $f \in \widetilde{\mathrm{Diff}_0^c}(M,\Omega)$ can be represented as a product of autonomous elements of 
$\widetilde{\mathrm{Diff}_0^c}(M,\Omega)$.
\end{thm}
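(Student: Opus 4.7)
The approach parallels the open symplectic case of Theorem \ref{auto_thm}, with the Calabi homomorphism replaced by the volume flux $\tFlux_\Omega$ and Theorem \ref{open_rel_simpl} replaced by Theorem \ref{vol_rel_simpl}. Writing $m = \dim M$, the first step is to realize the flux of a given $f \in \widetilde{\mathrm{Diff}_0^c}(M,\Omega)$ by an autonomous element. Representing $f$ by a compactly supported smooth isotopy $\{\psi^t\}$ with time-dependent generating vector field $X_t$, the compactly supported closed $(m-1)$-form $\beta = \int_0^1 \iota_{X_t}\Omega \, dt$ represents $\alpha := \tFlux_\Omega(f) \in H^{m-1}(M;\RR)$. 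Define a vector field $Y$ on $M$ by $\iota_Y \Omega = \beta$; since $\Omega$ is a volume form, $Y$ is uniquely determined and has the same compact support as $\beta$, and $\mathcal{L}_Y \Omega = d \iota_Y \Omega = d \beta = 0$, so the flow $\{\varphi_Y^t\}$ stays in $\mathrm{Diff}_0^c(M,\Omega)$. Setting $h = [\{\varphi_Y^t\}_{t \in [0,1]}]$ gives an autonomous element of $\widetilde{\mathrm{Diff}_0^c}(M,\Omega)$ with $\tFlux_\Omega(h) = \alpha$, so that $f h^{-1} \in \Ker(\tFlux_\Omega)$.

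Next, I would establish an analog of Lemma \ref{univ_ker_cal_lem} for the volume flux: the natural homomorphism $\tilde{G}_\Omega \to \widetilde{\mathrm{Diff}_0^c}(M,\Omega)$ induced by the inclusion $G_\Omega \hookrightarrow \mathrm{Diff}_0^c(M,\Omega)$ has image exactly $\Ker(\tFlux_\Omega)$. Granting this, choose any lift $u \in \tilde{G}_\Omega$ of $f h^{-1}$. Picking a nonzero compactly supported vector field $Z$ such that $\iota_Z \Omega$ is exact and $\varphi_Z^1 \neq \mathrm{Id}$ produces an autonomous element $g = [\{\varphi_Z^t\}_{t \in [0,1]}]$ of $\tilde{G}_\Omega \setminus \pi_1(G_\Omega)$. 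By Theorem \ref{vol_rel_simpl}, $u$ is a product of conjugates of $g$ or $g^{-1}$ in $\tilde{G}_\Omega$, and by Lemma \ref{auton_lem} each such conjugate is autonomous; autonomy is preserved under the projection to $\widetilde{\mathrm{Diff}_0^c}(M,\Omega)$. Combining with the autonomous $h$ via $f = (f h^{-1}) \cdot h$ then expresses $f$ as a product of autonomous elements.

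The main obstacle is the flux-analog of Lemma \ref{univ_ker_cal_lem}. In the Calabi setting one uses a single one-parameter family $\{\varphi_F^t\}$ to cancel the Calabi invariant along a given isotopy. For the volume flux, the target $H^{m-1}(M;\RR)$ can be higher-dimensional, so one must instead use a multi-parameter family of autonomous sections of $\tFlux_\Omega$ — obtained by applying the construction of the first paragraph to a collection of closed compactly supported $(m-1)$-forms whose classes span the finite-dimensional subspace visited by the flux of the given isotopy — and use them to reparametrize a zero-flux isotopy so that it lies entirely in $G_\Omega$ without changing its homotopy class.
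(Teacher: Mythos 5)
Your proposal is correct and takes essentially the same route as the paper: realize the flux of $f$ by an autonomous element (your explicit construction of $Y$ is exactly the standard proof of Lemma \ref{vol_flux_surj}, which the paper simply cites), reduce to the kernel of $\tFlux_\Omega$, and conclude from Theorem \ref{vol_rel_simpl} together with Lemma \ref{auton_lem}. The only difference is that you make explicit the identification of $\Ker(\tFlux_\Omega)$ with the image of $\tilde{G}_\Omega$ (the flux analogue of Lemma \ref{univ_ker_cal_lem}), a step the paper's own proof uses implicitly; your multi-parameter correction sketch for this bridging lemma is sound.
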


If we use Theorem \ref{smooth_rel_simpl} instead of Theorem \ref{closed_rel_simpl} in the proof of Theorem \ref{auto_thm}, we can prove Theorem \ref{smooth_auto} similarly.
Hence we omit the proof of Theorem \ref{smooth_auto}.

In order to prove Theorem \ref{vol_auto}, we use the following lemma.

\begin{lem}[{for example, see the proof of \cite[Proposition 3.1.6]{Ban97}}]\label{vol_flux_surj}
Let $M$ be an $n$-dimensional smooth manifold and $\Omega$ a volume form on $M$.
For every $a\in H^{n-1}(M;\RR)$, there exists a vector field $X$ such that
\begin{itemize}
    \item $\varphi_X^t \in \Diff(M,\Omega)$ for every $t\in\RR$,
    \item $\Flux_\Omega([\{\varphi_X^t\}_{t\in[0,1]}])=a$.
\end{itemize}
\end{lem}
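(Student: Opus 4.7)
The plan is to construct $X$ via the standard correspondence between volume-preserving vector fields on $M$ and closed $(n-1)$-forms, mediated by contraction with the volume form $\Omega$. First I would pick a smooth closed $(n-1)$-form $\alpha$ representing the cohomology class $a \in H^{n-1}(M;\RR)$. Since $\Omega$ is a nowhere-vanishing top form, the bundle map $TM \to \Lambda^{n-1}T^*M$ defined by $X \mapsto \iota_X \Omega$ is a fibrewise isomorphism, so there is a unique smooth vector field $X$ on $M$ with $\iota_X \Omega = \alpha$.

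Next I would verify that $X$ preserves $\Omega$ and compute the flux of its flow. By Cartan's formula together with $d\Omega = 0$, we have $\mathcal{L}_X \Omega = d\,\iota_X \Omega + \iota_X d\Omega = d\alpha = 0$, so wherever the flow is defined it is volume-preserving. Since $X$ is time-independent, the definition of $\tFlux_\Omega$ yields
\[
\tFlux_\Omega\bigl([\{\varphi_X^t\}_{t\in[0,1]}]\bigr) = \int_0^1 [\iota_X \Omega]\,dt = [\alpha] = a,
\]
giving the required equality.

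The main obstacle lies in ensuring that $\varphi_X^t$ is defined for every $t \in \RR$, i.e., that $X$ is complete. This is automatic when $M$ is closed, but on a general open manifold the vector field produced from an arbitrary representative $\alpha$ can fail to be complete (for instance, the obvious vector field dual to $d\theta$ on $\RR^2 \setminus \{0\}$ blows up in finite time). To circumvent this I would replace $\alpha$ by a cohomologous form $\alpha + d\beta$, where $\beta$ is built via a partition of unity subordinate to an exhaustion of $M$ by relatively compact open sets, chosen so that the associated vector field decays sufficiently fast near the ends of $M$ to generate a complete flow. Constructing such a $\beta$ without altering the de Rham class is the technical ingredient supplied by the proof of \cite[Proposition 3.1.6]{Ban97} referenced in the statement.
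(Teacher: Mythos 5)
Your first two paragraphs are exactly the standard argument that the paper points to in Banyaga's book: represent $a$ by a closed $(n-1)$-form $\alpha$, take the unique $X$ with $\iota_X\Omega=\alpha$, and use Cartan's formula plus time-independence to get $\mathcal{L}_X\Omega=0$ and $\tFlux_\Omega([\{\varphi_X^t\}])=[\alpha]=a$. For closed $M$ this is a complete and correct proof, and it is the intended one.

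The gap is in your third paragraph, and it is not merely a question of completeness. In this paper $\Diff(M,\Omega)$ means $\mathrm{Diff}^c(M,\Omega)$, the \emph{compactly supported} volume-preserving diffeomorphisms, and the class $[\{\varphi_X^t\}_{t\in[0,1]}]$ only makes sense in the universal covering as defined in Subsection 2.1 if the whole isotopy is supported in a single compact set; this in turn forces $X$ to vanish outside a compact set. So a vector field that merely ``decays sufficiently fast near the ends'' so as to be complete does not prove the lemma: its flow is generally not compactly supported, it does not lie in $\mathrm{Diff}^c(M,\Omega)$, and $\tFlux_\Omega$ as defined here is not even applicable to it. The correct repair in the open case is to choose the representative $\alpha$ to be a \emph{compactly supported} closed $(n-1)$-form; then $X$ has compact support, is automatically complete, its flow is a compactly supported isotopy, and the flux computation goes through verbatim. (Such a representative exists precisely for classes in the image of $H^{n-1}_c(M;\RR)\to H^{n-1}(M;\RR)$, which is no loss here: the flux of any compactly supported isotopy lies in that image, and in the application to Theorem \ref{vol_auto} the class $a$ arises as $\Flux_\Omega(f)$ for compactly supported $f$.) Also, be aware that your claim that one can always adjust $\alpha$ within its cohomology class to force completeness of the dual field is itself unproved; but even granting it, it would not yield membership in $\mathrm{Diff}^c(M,\Omega)$, which is what the statement requires.
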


\begin{proof}[Proof of Theorem \ref{vol_auto}]
Take $f \in \widetilde{\mathrm{Diff}_0^c}(M)$ and set $a=\Flux(f)$.
By Lemma \ref{vol_flux_surj}, there exists a vector field $X$ such that
\begin{itemize}
    \item $\varphi_X^t \in \Diff(M,\Omega)$ for every $t\in\RR$,
    \item $\Flux_\Omega(\tilde\varphi_X)=a$, where $\tilde\varphi_X=[\{\varphi_X^t\}_{t\in[0,1]}] \in \widetilde{\mathrm{Diff}_0^c}(M,\Omega)$.
\end{itemize}
Then, $f\circ(\tilde\varphi_X)^{-1} \in \Ker(\Flux_\Omega)$.

We can easily take an autonomous element $g$ of $\Ker(\Flux_\Omega)$ such that $g\notin \pi_1(\Ker(\Flux_\Omega))$.
Then, by Theorem \ref{vol_rel_simpl}, $f\circ(\tilde\varphi_X)^{-1} \in \Ker(\Flux_\Omega)$ can be represented as a product of conjugates of $g$ or $g^{-1}$.
By Lemma \ref{auton_lem}, conjugates of $g$ or $g^{-1}$ are autonomous.

Since $\tilde\varphi_X$ is autonomous, $f$ can be represented as a product of autonomous elements of 
$\widetilde{\mathrm{Diff}_0^c}(M,\Omega)$.
\end{proof}



\section{The Tsuboi metric of $\tHam(M,\omega)$} \label{tsuboi_metric}
In this section, we prove that
the Tsuboi metric space $\TK(\tHam(M,\omega))$ of $\tHam(M,\omega)$ is not quasi-isometric to $\RR_{\ge 0}$ 
(Theorem \ref{general ishida}). 
The strategy of the proof is the same as that of Ishida \cite{I18}.

Let $(M,\omega)$ be a  $2N$-dimensional  symplectic manifold.
Take a symplectic embedding $\iota \colon (\mathbb{B}^{2N}(R),\omega_0) \to (M,\omega)$ 
so that $U:=\iota(\mathbb{B}^{2N}(R))$ is displaceable.
Here, $\mathbb{B}^{2N}(R):=\{ x \in \RR^{2N} \mid \| x \| < R \} $ denotes the symplectic open ball with radius $R$  and $\omega_0$ is the standard symplectic form on $\mathbb{B}^{2N}(R)$.
Let $\iota^{-1} \colon U \to \mathbb{B}^{2N}(R) $ denote the inverse map of $\iota \colon \mathbb{B}^{2N}(R) \to U$.
Recall that $p_{U,M} \colon \tHam(U,\omega|_U) \to \tHam(M,\omega)$ denotes the homomorphism induced by the inclusion $U \hookrightarrow M$ and $\tHam_U(M,\omega)$ denotes the image of $p_{U,M}$. 
 The map $p_{U,M}$ induces the map $\underline{p}_{U,M} \colon \Ham(U,\omega|_U) \to \Ham(M,\omega)$ and let $\Ham_U(M,\omega)$ denote the image of $\underline{p}_{U,M}$. 
For $r>1$, let $\psi_r \colon \mathbb{B}^{2N}(R) \to \mathbb{B}^{2N}(R/r)$ be the map defined by $\psi_r(x)=x/r$.
We define the homomorphism $s_r \colon \Ham_U(M,\omega) \to \Ham_U(M,\omega)$ by 
\[
s_r(f)(x)=
\begin{cases}
 (\iota \circ \psi_r \circ \iota^{-1})  \circ g \circ (\iota \circ \psi_r^{-1} \circ \iota^{-1})  (x) & \text{if $ \iota^{-1}(x)  \in \mathbb{B}^{2N}(R/r)$}, \\
x & \text{if $ \iota^{-1}(x)  \not\in \mathbb{B}^{2N}(R/r)$},
\end{cases}
\]
 for $f \in \Ham_U(M,\omega)$, where $g \in  \Ham(U,\omega|_U) $ is the map with $ \underline{p}_{U,M} (g)=f$. 
We define $\tilde{s}_r \colon \tHam_U(M,\omega) \to \tHam_U(M,\omega)$ 
by $\tilde{s}_r([\{f_t\}])=[s_r(f_t)]$.

\begin{lemma} \label{shrinking lem}
For every partial Calabi quasimorphism $\mu \colon \tHam(M,\omega) \to \RR$, $f \in \tHam_U(M,\omega)$ and $r >1$, we have 
\[ \mu(\ts_r(f)) = r^{-(\dim M+  2 )} \mu(f). \]
\end{lemma}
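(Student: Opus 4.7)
The plan is to reduce the identity to a change-of-variables computation for the Calabi integral by invoking the Calabi property of partial Calabi quasi-morphisms. Since $U = \iota(\mathbb{B}^{2N}(R))$ is displaceable by our choice of $\iota$, Definition \ref{def_of_PCQ}(3) yields $\mu \circ p_{U,M}(\phi) = \Cal_U(\phi)$ for every $\phi \in \tHam(U,\omega|_U)$. Choose $g \in \tHam(U,\omega|_U)$ with $p_{U,M}(g) = f$, represented by an isotopy generated by a Hamiltonian $H \colon [0,1] \times U \to \RR$. From the definition of $\ts_r$, we then have $\ts_r(f) = p_{U,M}(\widetilde g)$, where $\widetilde g \in \tHam(U,\omega|_U)$ is represented by the isotopy obtained by conjugating the chosen representative of $g$ by $\iota \circ \psi_r \circ \iota^{-1}$ and extending by the identity. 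Hence it suffices to show
\[ \Cal_U(\widetilde g) \;=\; r^{-(\dim M + 2)}\, \Cal_U(g). \]

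To identify the generating Hamiltonian of $\widetilde g$, we pull everything back to $(\mathbb{B}^{2N}(R),\omega_0)$ via $\iota$ and keep writing $H_t$ for the pulled-back function. The key observation is that $\psi_r$ is only \emph{conformally} symplectic: $\psi_r^\ast \omega_0 = r^{-2}\omega_0$. From this, together with the naturality of Hamiltonian vector fields under diffeomorphisms, a direct computation shows that the conjugated isotopy $\psi_r \circ \varphi_H^t \circ \psi_r^{-1}$, viewed as a Hamiltonian isotopy on $(\mathbb{B}^{2N}(R),\omega_0)$, is generated by
\[ \widetilde H_t \;=\; r^{-2}\, H_t \circ \psi_r^{-1}, \]
supported in $\mathbb{B}^{2N}(R/r)$.

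Plugging $\widetilde H_t$ into the Calabi integral and changing variables by $y = \psi_r^{-1}(x) = rx$ — whose Jacobian scales the volume form $\omega_0^N$ by $r^{-2N}$ — gives
\[ \Cal_U(\widetilde g) \;=\; \int_0^1 \int_{\mathbb{B}^{2N}(R/r)} r^{-2}\, (H_t \circ \psi_r^{-1})\, \omega_0^N\, dt \;=\; r^{-(2N+2)} \int_0^1 \int_{\mathbb{B}^{2N}(R)} H_t\, \omega_0^N\, dt \;=\; r^{-(\dim M + 2)}\, \Cal_U(g), \]
since $\dim M = 2N$. Combined with the Calabi property this yields the lemma. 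The only substantive point is to assemble both scaling factors correctly: the $r^{-2}$ arising in $\widetilde H_t$ (because $\psi_r$ is conformally symplectic rather than symplectic) and the Jacobian $r^{-2N}$ of the volume form; together they account for the exponent $\dim M + 2$.
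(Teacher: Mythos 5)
Your proposal is correct and follows essentially the same route as the paper: reduce to $\Cal_U$ via the Calabi property on the displaceable ball $U$, identify the generating Hamiltonian of the rescaled element as $r^{-2}H_t\circ\psi_r^{-1}$ supported in the shrunk ball, and conclude by the change of variables whose Jacobian contributes $r^{-2N}$. The only difference is cosmetic: you justify the $r^{-2}$ factor explicitly through the conformal relation $\psi_r^\ast\omega_0=r^{-2}\omega_0$, which the paper states without derivation.
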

\begin{proof}
Take $g \in \tHam(U,\omega|_U)$ such that $p_{U,M}(g)=f$.
Since $U$ is displaceable and $\mu$ has the Calabi property, we have 
$\mu(f)=\mu(p_{U,M}(g))=\Cal_U(g)$. 
Let $H \colon [0,1] \times  U  \to \RR$ be the Hamiltonian and $\{\varphi_H^t\}$ the Hamiltonian isotopy associated to $H$ such that $[\{\varphi_H^t\}]=g$.
Take $g' \in \tHam(U,\omega|_U)$ such that $p_{U,M}(g')=\ts_r(f)$. Then, $g'$ is generated by the Hamiltonian $ H^r  \colon [0,1] 
\times   U   \to \RR$ defined by 
 \[H^r(t,x)=
\begin{cases}
\frac{1}{r^2}H(t, (\iota \circ \psi_r^{-1} \circ \iota^{-1})(x) ) & \text{if $ \iota^{-1}(x)  \in \mathbb{B}^{2N}(R/r)$}, \\
0 & \text{if $ \iota^{-1}(x)  \not\in \mathbb{B}^{2N}(R/r)$}.
\end{cases}
\] 
Hence, we have
\begin{align*}
\mu( \ts_r(f) ) = \Cal_U(g')  ={} &   \int_0^1\int_{x\in\mathbb{B}^{2N}(R/r)} 
 H_t^ r (\iota(x)) \omega_{\rm std}^N\,dt \\
 ={} & r^{- 2 }\int_0^1\int_{y\in\mathbb{B}^{2N}(R)} H_t(\iota(y)) |D\psi_r| \omega_{\rm std}^N\,dt \\
 ={} &  r^{-( 2N+ 2 )}\Cal_U(g)=r^{-( 2N+ 2 )}\mu(f). \qedhere
\end{align*}
\end{proof}

Let $\dg$ (resp. $\du$) denote the Tsuboi metric on $\TK(\tHam(M,\omega))$ (resp. $\TK(p_{U,M}(\Ker(\Cal_U)))$).

\begin{lemma} \label{ishida lem}
 Let $r > 1$ be a real number greater than 1.
\begin{enumerate}
    \item For every $f \in\tHam_U(M,\omega) \setminus \pi_1(\Ham(M,\omega))$ and $m,n \in \NN$, 
    \[\dg([\ts^m_r (f)], [\ts^n_r (f)]) \geq (\dim M \log r)|m - n|.\] 
    \item For every $f \in p_{U,M}(\Ker(\Cal_U)) \setminus \pi_1(\Ham(M,\omega))$ and $n \in \NN$, 
    \[\du([\ts^n_r (f)], [\ts^{n+1}_r (f)]) \leq \du([f], [\ts_r(f)]).\]
\end{enumerate}
\end{lemma}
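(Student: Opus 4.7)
For part (1), the plan is to bypass partial Calabi quasimorphisms and argue directly via the $\omega^N$-volume of supports. Assume without loss of generality that $m<n$, and set $k=q_{\ts_r^n(f)}(\ts_r^m(f))$; this is finite by the relative simplicity of $\tHam(M,\omega)$ (Theorem~\ref{closed_rel_simpl}), once one verifies $\ts_r^n(f)\notin\pi_1(\Ham(M,\omega))$, which follows from $\pi(\ts_r^n(f))=s_r^n(\pi(f))\neq\Id$. Fix a decomposition $\ts_r^m(f)=\prod_{i=1}^{k}h_i^{-1}(\ts_r^n(f))^{\epsilon_i}h_i$ in $\tHam(M,\omega)$. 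Projecting to $\Ham(M,\omega)$, using $\supp(\phi\psi)\subset\supp\phi\cup\supp\psi$ and the invariance of the volume form $\omega^N$ under symplectomorphisms, we get
\[
\mathrm{vol}_{\omega^N}\!\bigl(\supp(\pi(\ts_r^m(f)))\bigr)\leq k\cdot\mathrm{vol}_{\omega^N}\!\bigl(\supp(\pi(\ts_r^n(f)))\bigr).
\]
The definition of $s_r$ identifies $\supp(\pi(\ts_r^j(f)))$ with $(\iota\circ\psi_r^j\circ\iota^{-1})(\supp(\pi(f)))$, and a change-of-variables through the symplectic chart $\iota$ shows that this set has $\omega^N$-volume equal to $r^{-2Nj}V_f$, where $V_f:=\mathrm{vol}_{\omega^N}(\supp(\pi(f)))$. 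Since $f\notin\pi_1(\Ham(M,\omega))$, $\pi(f)\neq\Id$, its support contains a non-empty open set of $M$, and hence $V_f>0$. The above inequality becomes $V_f r^{-2Nm}\leq kV_fr^{-2Nn}$, which gives $k\geq r^{2N(n-m)}=r^{(\dim M)(n-m)}$; taking $\log$ yields part (1).

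For part (2), the plan is purely algebraic. First, $\ts_r$ restricts to an endomorphism of $p_{U,M}(\Ker\Cal_U)$: it is a group homomorphism, and the identity $\Cal_U(\ts_r(\tilde g))=r^{-(\dim M+2)}\Cal_U(\tilde g)$ (which also underlies Lemma~\ref{shrinking lem}) implies that $\Ker\Cal_U$ is $\ts_r$-stable. Now for any $a,b\in p_{U,M}(\Ker\Cal_U)$ and any decomposition $b=\prod_{i=1}^k h_i^{-1}a^{\epsilon_i}h_i$ inside $p_{U,M}(\Ker\Cal_U)$, applying the homomorphism $\ts_r^n$ yields a decomposition $\ts_r^n(b)=\prod_{i=1}^k\ts_r^n(h_i)^{-1}\ts_r^n(a)^{\epsilon_i}\ts_r^n(h_i)$ of the same length, so $q_{\ts_r^n(a)}(\ts_r^n(b))\leq q_a(b)$. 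Applying this with $(a,b)=(f,\ts_r(f))$ bounds $q_{\ts_r^n(f)}(\ts_r^{n+1}(f))\leq q_f(\ts_r(f))$, and with $(a,b)=(\ts_r(f),f)$ bounds $q_{\ts_r^{n+1}(f)}(\ts_r^n(f))\leq q_{\ts_r(f)}(f)$; taking maxima and applying $\log$ gives the desired inequality.

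The main expected obstacle lies in part (1): carefully identifying $\supp(\pi(\ts_r^j(f)))$ from the formula for $s_r$, verifying the scaling factor $r^{-2Nj}$ through the chart $\iota$ (so that the exponent $\dim M=2N$ matches the stated bound exactly), and ensuring $V_f>0$. Part (2) is essentially routine once one recognizes that $\ts_r$ preserves $p_{U,M}(\Ker\Cal_U)$ and that the $q$-function is monotone under group endomorphisms.
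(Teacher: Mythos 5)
Your proposal is correct and follows essentially the same route as the paper: part (1) is exactly the paper's support-volume argument (volume of $\supp(s_r^j(\pi(f)))$ scales by $r^{-j\dim M}$, and volume of support is a conjugation-invariant subadditive quantity), and part (2) is the paper's trick of pushing a decomposition of $\ts_r(f)$ into conjugates of $f^{\pm1}$ forward through the homomorphism $\ts_r^n$. Your version of (2) is in fact slightly more careful than the paper's, which writes the conjugators as $h_i$ rather than $\ts_r^n(h_i)$; your observation that $\ts_r$ preserves $p_{U,M}(\Ker(\Cal_U))$ is the point that makes the bound legitimate for $\du$.
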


\begin{proof}
Assume that $m < n$.
Since the volume of the support of $\pi(\ts^m_r (f)) = s^m_r (\pi(f))$ is just  $r^{(n-m) \dim M }$ 
times of that of $s^n_r (\pi(f))$, we have  $q_{\ts^n_r(f)}(\ts^m_r (f)) \geq r^{(n-m)\dim M}$. 
This implies (1).

By Theorem \ref{open_rel_simpl}, $\ts_r(f)$ is written as a product
\[\ts_r(f) = (h_1 f^{\ee_1} h_1^{-1}) \cdots (h_k f^{\ee_k} h_k^{-1}), \]
where each $\ee_i$ is $1$ or $-1$ and $h_i \in p_{U,M}(\Ker(\Cal_U))$. 
Then we have 
\[\ts_r^{n+1}(f) = (h_1 \ts_r^n(f)^{\ee_1} h_1^{-1}) \cdots (h_k \ts_r^n(f)^{\ee_k} h_k^{-1}), \]
and thus $q_{s_r^n (f)}(s_r^{n+1} (f)) \leq  q_{f}(s_r(f))$. 
The inequality $q_{s_r^{n+1}(f)}(s_r^n (f)) \leq q_{s_r(f)}(f)$ similarly follows. Hence we have (2).
\end{proof}

\begin{proposition} \label{ishida ineq}
For every $f \in p_{U,M}(\Ker(\Cal_U))\setminus \pi_1(\Ham(M,\omega))$, there exist a sequence $\{f_n\}$ in $\tHam(M,\omega)$ with $f_0=f$  and $g \in \tHam(M,\omega)$, positive constants $C_1,C_2$, and a constant $C_3$ satisfying the following.
\begin{enumerate}
    \item $d([f_n], [f_m]) \geq C_1|n - m|$,
    \item $d([f_n], [f_{n+1}]) \leq  C_2$,
    \item $d([f_n], [g^m]) \geq \log m + C_3$.
\end{enumerate}
\end{proposition}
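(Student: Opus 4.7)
The natural choice is $f_n = \ts^n_r(f)$ for a fixed $r > 1$. Condition~(1) follows directly from Lemma~\ref{ishida lem}(1), giving $C_1 = \dim M \cdot \log r$. For~(2), the key observation is that expressing an element as a product of conjugates in a subgroup is at least as restrictive as doing so in the ambient group, so $q_a^{\tHam(M,\omega)}(b) \leq q_a^{p_{U,M}(\Ker(\Cal_U))}(b)$, and hence $\dg \leq \du$ on the common domain $p_{U,M}(\Ker(\Cal_U)) \setminus \pi_1(\Ham(M,\omega))$. Combined with Lemma~\ref{ishida lem}(2), this yields $\dg([f_n], [f_{n+1}]) \leq \du([f_n], [f_{n+1}]) \leq \du([f], [\ts_r(f)]) =: C_2$.

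The main step is~(3), which will use a partial Calabi quasi-morphism $\mu\colon \tHam(M,\omega) \to \RR$ taken with respect to the displaceable open set $U$ (which exists by Remark~\ref{calabi qm remark}). I will take $g = p_{U,M}(g_U)$ for some $g_U \in \tHam(U, \omega|_U)$ with $\Cal_U(g_U) \neq 0$, so that the Calabi property gives $\mu(g) = \Cal_U(g_U) \neq 0$. Since $f \in p_{U,M}(\Ker(\Cal_U))$, the Calabi property also yields $\mu(f) = 0$, and Lemma~\ref{shrinking lem} then forces $\mu(f_n) = 0$ for every $n$. In addition, one needs $g^m \notin \pi_1(\Ham(M,\omega))$ for all $m \geq 1$ so that $[g^m]$ makes sense; this can be arranged by choosing $g_U$ generically, for example as the time-one map of an autonomous Hamiltonian with non-resonant frequencies supported near the center of $U$.

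Now suppose $g^m = \prod_{i=1}^k h_i f_n^{\ee_i} h_i^{-1}$ with $h_i \in \tHam(M,\omega)$ and $\ee_i \in \{\pm 1\}$. Each factor $h_i f_n^{\ee_i} h_i^{-1}$ is a single conjugate of an element of $\tHam_U(M,\omega)$ (since $f_n$ is supported in $\iota(\mathbb{B}^{2N}(R/r^n)) \subset U$), so has fragmentation norm $\|\cdot\|_U \leq 1$. Applying partial quasi-additivity iteratively, using the bound $\|\cdot\|_U \leq 1$ on the freshly appended factor at each step, yields an estimate of the form $|\mu(g^m) - \sum_{i=1}^k \mu(h_i f_n^{\ee_i} h_i^{-1})| \leq (k-1)C$. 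Conjugation invariance of $\mu$, together with the bound $|\mu(f_n^{\pm 1})| \leq C'$ (which follows from $\mu(f_n) = 0$ and from applying partial quasi-additivity to $f_n \cdot f_n^{-1} = \Id$), implies that each summand is bounded by a constant $C_0$ independent of $n$ and of the decomposition. Combining gives $|\mu(g^m)| \leq k(C + C_0)$, and partial homogeneity $\mu(g^m) = m\mu(g)$ then forces $k \geq m|\mu(g)| / (C + C_0)$. Therefore $q_{f_n}(g^m) \geq m|\mu(g)|/(C + C_0)$, and the desired inequality $d([f_n], [g^m]) \geq \log m + C_3$ follows with $C_3 = \log\bigl(|\mu(g)|/(C+C_0)\bigr)$. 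The main obstacle is the careful treatment of the case $\ee_i = -1$ and of conjugation invariance, since partial Calabi quasi-morphisms are only semi-homogeneous ($\mu(\phi^k) = k\mu(\phi)$ for $k \geq 0$); these points are handled by standard arguments using partial quasi-additivity on elements of bounded fragmentation norm, in the spirit of~\cite{EP06} and~\cite{I18}.
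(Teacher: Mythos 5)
Your proposal is correct and follows essentially the same route as the paper's proof: the same sequence $f_n=\ts_r^n(f)$, the same use of Lemma~\ref{ishida lem} together with the comparison $\dg\le\du$ for items (1) and (2), and the same counting argument for (3) via a partial Calabi quasi-morphism (iterated partial quasi-additivity on $k=q_{f_n}(g^m)$ conjugates of $f_n^{\pm 1}$ with $\|\cdot\|_U\le 1$, Lemma~\ref{shrinking lem}, and partial homogeneity $\mu(g^m)=m\mu(g)$). The differences are only cosmetic: you take $g$ supported in $U$ with $\mu(g)=\Cal_U(g_U)\neq 0$ and make explicit the bound on $\mu(f_n^{-1})$, the use of conjugation invariance, and the requirement $g^m\notin\pi_1(\Ham(M,\omega))$, points the paper treats implicitly.
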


\begin{proof}
Take $r >1$ and  set $f_n= \ts_r^n(f)$. Then (1) and (2) follow from Lemma \ref{ishida lem} 
(note that $\dg([h],[h']) \leq \du([h],[h'])$ for every $h,h' \in  p_{U,M}(\Ker(\Cal_U))\setminus \pi_1(\Ham(M,\omega))$).
Take a partial Calabi quasimorphism $\mu \colon \tHam(M,\omega) \to \RR$ (it exists; see Remark \ref{calabi qm remark}) and $g \in \tHam(M,\omega) \setminus \pi_1(\Ham(M,\omega))$ such that $\mu(g)\neq 0$. 
Set $k=q_{f_n}(g^m)$. 
Then,  by Theorem \ref{closed_rel_simpl}, we can write $g^m$ as
\[ g^m= h_1 \cdots h_k, \]
where $h_i$ is a conjugate of an element of $f_n$ or $f_n^{-1}$ for $i=1,\dots,k$.
Let $C$ be the constant with respect to the partial quasimorphism $\mu$ (see Definition \ref{def_of_PCQ} (2)).
Since $\| h_i \|_U=1$, we have 
\[\left| \mu(g^m)-\sum_{i=1}^k\mu(h_i) \right| \leq C k.\]
Since $\mu(\ts_r(h)) = r^{-(\dim M+ 2 )} \mu(h)$ for $h \in \tHam_U(M,\omega)$ by Lemma \ref{shrinking lem}, we observe that $|\mu(h_i)| 
=|\mu(f_n)|  \leq |\mu(f)| $. Thus, we obtain 
\[ |\mu(g^m)| \leq (C+|\mu(f)|)k.\]
This implies 
\[d([f_n],[g^m]) \geq \log\frac{|\mu(g^m)|}{C+|\mu(f)|}
=  \log m + \log\frac{|\mu(g)|}{C+|\mu(f)|} \]
and hence (3) holds.
\end{proof}

\begin{proof}[Proof of Theorem \ref{general ishida}]
Assume that $\TK(\tHam(M,\omega))$ is quasi-isometric to the half line (with the standard metric). Then there exists a quasi-isometric embedding $\Phi \colon \TK(\tHam(M,\omega)) \to \RR_{>0}$.
By Proposition \ref{ishida ineq} (3), we have $\Phi([f])< \Phi([g^m])$ for sufficiently large $m \in \NN$. By Proposition \ref{ishida ineq} (1), there exists $n \in \NN$ such that $\Phi([g^m])<\Phi([f_n])$; take such a smallest $n=n_m$. 
By Proposition \ref{ishida ineq} (2) and the definition of $n_m$,  the function $m\mapsto \Phi([f_{n_m}]) - \Phi([g^m])$ is bounded, but this contradicts Proposition \ref{ishida ineq} (3).
\end{proof}

\section{The Tsuboi metric of $S_\infty$} \label{symmetric_group}
In this section, we consider the symmetric groups and the alternating group.
For a positive integer $n$, let $S_n$ denote the symmetric group of degree $n$.
We define the symmetric group $S_\infty$ to be $\bigcup_{n=1}^\infty S_n$.
More precisely, we define $S_\infty$ as the direct limit of the direct system induced from the standard injections $S_n\to S_{n+1}$.
We define the infinite alternating group $A_n$, $A_\infty$ similarly.

The goal of this section is to show the 
 Tsuboi metric space $\TK(S_\infty)$ of $S_\infty$ 
is quasi-isometric to $\RR_{\ge 0}$ (Theorem \ref{theorem S_infinity_intro}).

We introduce the following notation. 
For a positive integer $k$, set
\[ \iota_k = (1,2) (3,4)\cdots (2k-1,2k). \]
We start with the following criterion, which will frequently be used in this section.

\begin{lemma} \label{lemma three}
Let $l$ and $k$ be positive integers with $l \le k$. Assume that $n \ge 2k$ or $n = \infty$. If $l$ and $k$ have the same parity, then $\iota_l$ is a product of three elements of $S_n$ which are conjugate to $\iota_k$.
\end{lemma}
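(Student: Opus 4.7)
The plan is to exhibit $\sigma_1, \sigma_2, \sigma_3$ explicitly. Write $k - l = 2m$; this is where the parity hypothesis enters.

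First I would take $\sigma_1 = \iota_k$. Since $\iota_k$ is an involution, the identity $\sigma_1 \sigma_2 \sigma_3 = \iota_l$ is equivalent to $\sigma_2 \sigma_3 = \iota_k \iota_l$, and direct cancellation of the $l$ common transpositions gives $\iota_k \iota_l = (2l+1, 2l+2)(2l+3, 2l+4) \cdots (2k-1, 2k)$, call it $\rho$. This is an involution consisting of $2m$ disjoint transpositions, supported on $B := \{2l+1, \ldots, 2k\}$, and fixing $A := \{1, \ldots, 2l\}$.

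Next I would look for $\sigma_2, \sigma_3$ in the factored form $\sigma_2 = \iota_l \cdot \beta$ and $\sigma_3 = \iota_l \cdot \beta'$, with $\beta, \beta'$ supported on $B$. Because the $A$-part and the $B$-part commute and $\iota_l^2 = e$, this gives $\sigma_2 \sigma_3 = \beta \beta'$. The problem then reduces to writing $\rho = \beta \beta'$, where $\beta, \beta'$ are fixed-point-free involutions on $B$ each consisting of $2m$ disjoint transpositions. Once such $\beta, \beta'$ are found, $\sigma_2$ and $\sigma_3$ automatically have cycle type $2^{l + 2m} = 2^k$, hence are conjugate to $\iota_k$ in $S_n$ (which uses the hypothesis $n \ge 2k$ or $n = \infty$).

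The crux is therefore to produce $\beta$ such that $\beta' := \beta \rho$ is again a fixed-point-free involution on $B$; equivalently, $\beta$ must commute with $\rho$ and $\beta \rho$ must have no fixed points on $B$. Grouping the $2m$ transpositions of $\rho$ into $m$ consecutive pairs, say $\{(a,b), (c,d)\}$, I would let the corresponding block of $\beta$ be $(a,c)(b,d)$. A one-line check in $S_{\{a,b,c,d\}}$ shows that $(a,c)(b,d)$ commutes with $(a,b)(c,d)$ and that their product is $(a,d)(b,c)$. Assembling the $m$ blocks produces a $\beta$ of cycle type $2^{2m}$ on $B$ for which $\beta \rho$ is also a product of $2m$ disjoint transpositions on $B$; then $\beta \beta' = \beta^2 \rho = \rho$.

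The main (and quite mild) obstacle is the verification that $\sigma_3$ has the right cycle type, which amounts to the block-by-block computation that $\beta \rho$ is fixed-point-free on $B$. The hypothesis $n \ge 2k$ or $n = \infty$ is used in two places: to guarantee that the points $1, \ldots, 2k$ all lie in $\{1, \ldots, n\}$ so the construction makes sense inside $S_n$, and to ensure that having cycle type $2^k$ in $S_n$ is equivalent to being conjugate to $\iota_k$.
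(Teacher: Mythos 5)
Your proof is correct and is essentially the paper's own argument: after your reduction to $\sigma_2\sigma_3=\rho$, the block-by-block choice of $\beta$ and $\beta'=\beta\rho$ recreates exactly the paper's three factors --- a common product of $l$ transpositions shared by all three, together with the three fixed-point-free involutions of each $4$-point block (Klein four group), whose product is the identity --- just specialized to the points $1,\dots,2k$ with the first factor taken to be $\iota_k$ itself. No gap; nothing further is needed.
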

\begin{proof}
Let $s$ be an integer such that $k - l = 2s$. Let $x_1, y_1, \cdots, x_{2k}, y_{2k}$ be distinct elements in $\{ 1, \cdots, n\}$. Define $\sigma, \sigma', \sigma'' \in S_n$ by
\begin{align*}
\sigma & = (x_1, y_1) \cdots (x_{2k}, y_{2k}) \\
& = \big( (x_1, y_1) \cdots (x_l, y_l) \big) \cdot \big( (x_{l+1}, y_{l + 1})(x_{l + 2}, y_{l + 2}) \big) \cdots \big( (x_{l + 2s - 1}, y_{l + 2s - 1})(x_{l + 2s}, y_{l + 2s}) \big),
\end{align*}
\begin{align*}
\sigma' = \big( (x_1, y_1) \cdots (x_l, y_l) \big) \cdot \big( (x_{l+1}, x_{l + 2})(y_{l + 1}, y_{l + 2}) \big) \cdots \big( (x_{l + 2s - 1}, x_{l + 2s})(y_{l + 2s-1}, y_{l + 2s}) \big),
\end{align*}
\begin{align*}
\sigma'' = \big( (x_1, y_1) \cdots (x_l, y_l) \big) \cdot \big( (x_{l+1}, y_{l+2}) (x_{l+2}, y_{l+1}) \big) \cdots \big( (x_{l + 2s - 1}, y_{l + 2s}) (x_{l+ 2s}, y_{l + 2s - 1}) \big).
\end{align*}
Then we have
\[ \sigma \sigma' \sigma'' = (x_1, y_1) \cdots (x_l, y_l),  
\]
which is conjugate to $\iota_{l}$. 
 Since $\sigma$, $\sigma'$, and $\sigma''$ are conjugate to $\iota_k$, this completes the proof.
\end{proof}

\begin{lemma} \label{lemma S relatively simple}
Let $n$ be an integer greater than $1$  or $\infty$. Then $A_n$ is the maximum normal subgroup of $S_n$. In particular, $S_n$ is relatively simple.
\end{lemma}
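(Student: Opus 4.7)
The plan is to verify the two conditions of Definition \ref{definition relatively simple}: $A_n$ is a proper normal subgroup of $S_n$, and every element of $S_n \setminus A_n$ normally generates $S_n$ (using Lemma \ref{rel simpl equiv}). The first is immediate: $A_n$ is the kernel of the sign homomorphism $\mathrm{sgn} \colon S_n \to \{\pm 1\}$, which is well-defined and surjective whenever $n \ge 2$ or $n = \infty$ (in the infinite case, any element lies in some finite $S_m$, so the sign is unambiguous).

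For the second condition, fix $\sigma \in S_n \setminus A_n$ and let $N = \langle\!\langle \sigma \rangle\!\rangle$ denote the normal closure of $\sigma$ in $S_n$. Since $A_n$ is normal, $N \cdot A_n$ is a subgroup. Its image in $S_n/A_n \cong \ZZ/2\ZZ$ contains the nontrivial coset $\sigma A_n$, so
\[
N \cdot A_n = S_n.
\]
It therefore suffices to prove $A_n \subset N$; combined with the equality above, this gives $N = S_n$.

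For the main cases $n = 2, 3$, $n \ge 5$, or $n = \infty$, the key fact is that $A_n$ is a simple group (classical for $n \ge 5$, trivial for $n = 2, 3$, and well-known for $n = \infty$ as the direct limit $\bigcup_{n \ge 5} A_n$ of simple groups is simple). Both $N$ and $A_n$ are normal in $S_n$, so $N \cap A_n$ is a normal subgroup of $S_n$, hence of $A_n$. To apply simplicity, I will exhibit a nontrivial element of $N \cap A_n$: since $\sigma$ is not central (the center of $S_n$ is trivial for $n \ge 3$, and for $n = 2$ the claim is immediate as $S_2 \setminus A_2 = \{(1\,2)\}$), there exists $\tau \in S_n$ with $\sigma \tau \neq \tau \sigma$, so the commutator $[\sigma, \tau] = \sigma \tau \sigma^{-1} \tau^{-1}$ is a nontrivial element of $N \cap A_n$. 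Simplicity of $A_n$ then forces $N \cap A_n = A_n$, and we are done.

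The only remaining case, $n = 4$, is the main obstacle, since $A_4$ is not simple (it contains the Klein four-group $V_4$ as a normal subgroup of $S_4$). I will handle it directly. The odd elements of $S_4$ are the transpositions and the $4$-cycles; for a transposition $\sigma = (a\,b)$ the normal closure is all transpositions and hence $S_4$. For a $4$-cycle $\sigma = (a\,b\,c\,d)$, I will compute a commutator $[\sigma, \tau]$ with a transposition $\tau$ to produce a $3$-cycle inside $N$; since the normal closure of any $3$-cycle in $S_4$ is $A_4$, this yields $A_4 \subset N$, and combining with $NA_4 = S_4$ completes the proof. (Alternatively, one may invoke the classical classification of normal subgroups of $S_4$ as $\{e\}, V_4, A_4, S_4$: since $\sigma$ is odd and does not lie in $V_4$, the normal closure is forced to be $S_4$.)
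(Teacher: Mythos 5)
Your proof is correct, and it reaches the conclusion by a partly different route than the paper. Both arguments reduce the cases $n\ge 3$, $n\ne 4$ (including $n=\infty$) to the simplicity of $A_n$ and treat $S_4$ separately, but they differ in how a nontrivial element of $\langle\langle\sigma\rangle\rangle\cap A_n$ is produced: the paper splits on whether $\sigma^2=1$, feeding $\sigma^2$ to simplicity when it is nontrivial, and when $\sigma$ is an odd involution it invokes Lemma \ref{lemma three} to write a single transposition as a product of three conjugates of $\sigma$ and then uses that transpositions normally generate $S_n$; you instead uniformly take a commutator $[\sigma,\tau]$ with $\tau$ chosen not to commute with $\sigma$ (possible since the center of $S_n$ is trivial for $n\ge 3$ and for $n=\infty$), which lies in $\langle\langle\sigma\rangle\rangle\cap A_n$ and is nontrivial, so simplicity applies without any case split. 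Your version is more elementary and self-contained, while the paper's reuses Lemma \ref{lemma three}, which it needs anyway for the quasi-isometric analysis of $\TK(S_\infty)$, so neither is longer in context. Two small points to tidy in your write-up: $A_2$ is the trivial group, hence not simple --- harmless, since you dispose of $n=2$ directly; and the $n=4$ commutator step should be recorded explicitly, e.g.\ for a $4$-cycle $\sigma$ and a transposition $\tau$ interchanging two consecutive entries of $\sigma$, the element $[\sigma,\tau]$ is a product of two transpositions sharing exactly one letter, hence a $3$-cycle, whose normal closure in $S_4$ is $A_4$ (or one may simply quote the classification of normal subgroups of $S_4$, as you note); this plays the same role as the paper's explicit identity $(1,2,3,4)^2(1,3,2,4)=(3,4)$.
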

\begin{proof}
The case that $n = 2$ is obvious. Suppose that $n \ge 3$ and $n \ne 4$. Let $\sigma \in S_n \setminus A_n$. We want to show that $\langle\langle\sigma \rangle\rangle = S_n$, where $\langle\langle\sigma \rangle\rangle$ is the normal subgroup generated by $\{\sigma\}$.

 Note that  $\sigma^2 \in \langle\langle \sigma \rangle\rangle \cap A_n$. If $\sigma^2 \ne 1$, then $A_n \subsetneq \langle\langle \sigma \rangle\rangle$ since $A_n$ is simple, and hence $\langle\langle \sigma \rangle\rangle = S_n$. If $\sigma^2 = 1$, then $\sigma$ is a product of transpositions whose supports are disjoint. Since $\sigma$ is odd, Lemma \ref{lemma three} implies that a transposition belongs to $\langle\langle \sigma \rangle\rangle$. It is well known that a transposition normally generates $S_n$, and hence $\langle\langle \sigma \rangle\rangle = S_n$.

Finally, we show that $A_4$ is the maximum normal subgroup of $S_4$. Let $\sigma \in S_4 \setminus A_4$. Then $\sigma$ is a transposition or a cyclic permutation with length $4$. If $\sigma$ is a transposition then $\langle\langle \sigma \rangle\rangle = S_4$. If $\sigma$ is a cyclic permutation with length $4$,
\[ (1,2,3,4)^2 (1,3,2,4) = (3,4)\]
implies that $\langle\langle \sigma \rangle\rangle$ contains a transposition, and hence we have $S_4$.
\end{proof}

In the rest of this section we show Theorem \ref{theorem S_infinity_intro}, which is restated below;

\begin{theorem} \label{theorem S_infinity}
The metric space $(\TK(S_\infty), d)$ is quasi-isometric to the half line $\RR_{\ge 0}$.
\end{theorem}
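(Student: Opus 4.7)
The plan is to show that the support-size function induces a quasi-isometry $\Phi \colon \TK(S_\infty) \to \RR_{\geq 0}$ given by $\Phi([\sigma]) = \log \|\sigma\|$, where $\|\sigma\|$ denotes the number of points moved by $\sigma$. Since $\|\sigma\|$ depends only on the cycle type, $\Phi$ is well defined on symmetrized conjugacy classes, and $\Phi$ is quasi-surjective because the values $\Phi([\iota_{2j+1}]) = \log(4j+2)$ for $j \geq 0$ form a quasi-dense subset of $[\log 2, \infty) \subset \RR_{\geq 0}$. The inequality $|\Phi([f]) - \Phi([g])| \leq d([f],[g])$ is the easy direction: if $f$ is a product of $n$ conjugates of $g^{\pm 1}$, then $\supp(f) \subseteq \bigcup_i h_i(\supp(g))$, so $\|f\| \leq n\|g\|$, whence $q_g(f) \geq \|f\|/\|g\|$; taking the symmetric maximum and logarithm yields the bound.

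For the opposite inequality $d([f],[g]) \leq A|\Phi([f])-\Phi([g])| + B$, by the triangle inequality it suffices to prove two sub-claims: \emph{(Shape)} for every odd $g$ with $\|g\| = 2n$ ($n \geq 1$) there exists an odd $k \in \{n-1, n\}$ such that $d([g], [\iota_k]) \leq C$ for some absolute constant $C$; and \emph{(Scale)} for odd positive integers $k \leq l$, $d([\iota_k], [\iota_l]) \leq \log(l/k) + C'$. For \emph{(Scale)}, Lemma \ref{lemma three} gives $q_{\iota_l}(\iota_k) \leq 3$ directly. For $q_{\iota_k}(\iota_l)$, write $l = qk + r$ with $0 \leq r < k$ and realize $\iota_l$ as a product of $q$ disjoint (and therefore commuting) shifts of $\iota_k$ together with an $O(1)$ remainder controlled by Lemma \ref{lemma three}; when the parity of $r$ forces the leftover to have even support, one adjusts $q$ by one and peels off one further $\iota_k$ factor from the enlarged remainder, reducing to an even permutation of support $< 2k$ which is handled in $O(1)$ conjugates via Kodama's Theorem \ref{theorem A_infinity_intro} applied in $A_\infty$.

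For \emph{(Shape)}, we establish both $q_{\iota_k}(g) = O(1)$ and $q_g(\iota_k) = O(1)$. To bound $q_{\iota_k}(g)$, decompose $g = g_1 \tau$ where $\tau$ is a transposition and $g_1 \in A_\infty$ has $\|g_1\| \leq 2n+2$. The factor $\tau$ costs $3$ conjugates of $\iota_k$ via Lemma \ref{lemma three}. For $g_1$, introduce the auxiliary element $\rho = \iota_k \cdot s(\iota_k) \in A_\infty$, where $s$ is a shift making the two copies of $\iota_k$ disjoint; then $\rho$ is a product of $2$ conjugates of $\iota_k^{\pm 1}$ and $\|\rho\| = 4k$. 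Since $\|g_1\|$ and $\|\rho\|$ are within a bounded ratio, Kodama's Theorem \ref{theorem A_infinity_intro} yields $d_{A_\infty}([g_1], [\rho]) = O(1)$, so $q^{A_\infty}_\rho(g_1) = O(1)$; each $A_\infty$-conjugate of $\rho^{\pm 1}$ is also an $S_\infty$-conjugate, which unpacks into $2$ conjugates of $\iota_k^{\pm 1}$, giving $q_{\iota_k}(g_1) = O(1)$. For $q_g(\iota_k)$, consider a disjoint shifted copy $g'$ of $g$ and set $h_0 = g (g')^{-1} \in A_\infty$, which is a product of $2$ conjugates of $g^{\pm 1}$ and has support of size $4n \approx 4k$. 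Decompose $\iota_k = \tau' \cdot \iota'_{k-1}$ with $\tau'$ a transposition and $\iota'_{k-1}$ an even permutation of support $2(k-1)$; Kodama's theorem applied to $h_0$ in $A_\infty$ bounds both $q^{A_\infty}_{h_0}(\iota'_{k-1})$ and $q^{A_\infty}_{h_0}(g^{-1}\tau')$ by $O(1)$, and rewriting $\tau' = g \cdot (g^{-1}\tau')$ completes the bound $q_g(\iota_k) = O(1)$. The main obstacle is the careful parity bookkeeping together with the translation between $A_\infty$-conjugate expressions and $S_\infty$-conjugate expressions for these reductions; once this is done, assembling \emph{(Shape)} and \emph{(Scale)} through the triangle inequality produces the desired quasi-isometry.
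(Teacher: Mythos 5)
Your overall route is genuinely different from the paper's: the paper never uses Theorem \ref{theorem A_infinity_intro} as a black box, but instead exhibits the explicit skeleton $X=\{[\iota_{3^k}]\}$, computes $d([\iota_{3^k}],[\iota_{3^l}])=|k-l|\log 3$ from the two-sided estimates of Lemma \ref{lemma asymmetry}, and proves coarse density of $X$ by explicit constructions (Lemmas \ref{lemma key 1S}--\ref{lemma coarsely dense}), importing only Kodama's combinatorial Lemmas \ref{lemma kodama 1} and \ref{lemma kodama 2} about cyclic permutations. Your plan --- showing that $[\sigma]\mapsto\log\nu(\sigma)$ is itself a quasi-isometry, with the lower bound coming from the support-counting estimate and the upper bound from your (Shape)$+$(Scale) reduction to the $\iota_k$ --- is a reasonable alternative; the easy direction and the assembly by the triangle inequality are correct.

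However, the way you invoke Theorem \ref{theorem A_infinity_intro} leaves genuine gaps. First, that statement only asserts the existence of \emph{some} quasi-isometry $\TK(A_\infty)\to\RR_{\ge 0}$; it says nothing about which classes are close, so the deduction ``$\|g_1\|$ and $\|\rho\|$ have bounded ratio, hence $d_{A_\infty}([g_1],[\rho])=O(1)$'' does not follow from it. You need the explicit form of Kodama's result, namely that the logarithm of the support size realizes the quasi-isometry (the content of \cite{K11}), not the bare statement quoted in the paper. Second, and more seriously, in your (Scale) step the even leftover can have support arbitrarily small compared with $2k$ (two disjoint transpositions versus huge $k$), so even the explicit form of Kodama's theorem cannot give $O(1)$ there: for any even reference built from conjugates of $\iota_k$, such as $\rho=\iota_k\cdot s(\iota_k)$, the \emph{symmetrized} Tsuboi distance to the leftover is of order $\log k-\log r$, since $q_{\mathrm{leftover}}(\rho)\ge\nu(\rho)/\nu(\mathrm{leftover})$; what you actually need is the one-sided bound $q_{\iota_k}(\iota_r)=O(1)$ for even $r<k$, which a quasi-isometry statement cannot see and which must be proved directly. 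It is easy to do so: write $\iota_r=\sigma_1\sigma_2$, where $\sigma_2$ consists of $r/2$ of the transpositions of $\iota_r$ padded by $k-r/2$ fresh disjoint transpositions and $\sigma_1$ consists of the remaining $r/2$ transpositions padded by the same fresh ones; both factors are conjugate to $\iota_k$ and the padding cancels (this is a parity-adjusted variant of Lemma \ref{lemma three}). With that replacement, and with the small-support edge cases ($g$ a transposition, trivial $g_1$ or trivial remainders) treated separately, your argument goes through; as written, the two appeals to Theorem \ref{theorem A_infinity_intro} do not support the claims made of them.
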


Define the subset $X$ of $\TK(S_\infty)$ by
\[ X = \{ [\iota_{3^k}] \; | \; \textrm{$k$ is a non-negative integer}\}.\]
Then Theorem \ref{theorem S_infinity} is deduced from the following two propositions:

\begin{proposition} \label{proposition coarsely dense}
$X$ is coarsely dense in $\TK(S_\infty)$. Namely, there exists $R \ge 0$ such that for every $\sigma \in S_\infty$, there is $x \in X$ such that $d([\sigma], x) \le R$.
\end{proposition}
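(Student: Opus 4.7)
Given $\sigma \in S_\infty \setminus A_\infty$, let $n = |\supp(\sigma)|$ and choose $k$ to be the smallest non-negative integer with $3^k \ge n/2$, so that $n/2 \le 3^k \le 3n/2$. With this choice, the supports of $\sigma$ and $\iota_{3^k}$ have comparable size (within a factor of three). This balance is essential: a much larger $k$ would force $q_\sigma(\iota_{3^k})$ to grow at least like $3^k/n$ by a support-counting argument, while a much smaller $k$ would force $q_{\iota_{3^k}}(\sigma)$ to grow at least like $n/3^k$. The goal is to bound both $q_{\iota_{3^k}}(\sigma)$ and $q_\sigma(\iota_{3^k})$ by a universal constant, which will give $d([\sigma], [\iota_{3^k}]) \le R$ for a universal $R$.

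To bound $q_{\iota_{3^k}}(\sigma) \le 9$, I plan to use the classical fact that every element of a symmetric group is a product of two involutions supported inside its own support. Writing $\sigma = \tau_1 \tau_2$ in this way, the odd parity of $\sigma$ forces exactly one of $\tau_1, \tau_2$ to be odd and the other to be even; I would split the even involution into two disjoint odd involutions by detaching a single transposition from its disjoint-cycle decomposition. This produces $\sigma = \alpha_1 \alpha_2 \alpha_3$ where each $\alpha_j$ is an odd involution conjugate to some $\iota_{m_j}$ with $m_j$ odd and $m_j \le 3^k$ (the odd-parity constraint upgrades the bound $m_j \le 3^k + 1$ to $m_j \le 3^k$). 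Lemma~\ref{lemma three} then expresses each $\iota_{m_j}$ as a product of three conjugates of $\iota_{3^k}$, giving nine conjugates in total.

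The reverse estimate $q_\sigma(\iota_{3^k}) \le C$ for a universal constant $C$ is the main obstacle. My plan is to pass through an intermediate odd involution $\iota_{m_\sigma}$ with $m_\sigma$ comparable to $3^k$, first bounding $q_\sigma(\iota_{m_\sigma})$ by a constant and then $q_{\iota_{m_\sigma}}(\iota_{3^k})$ by a constant. The latter is a direct construction: when $3^{k-1} \le m_\sigma \le 3^k$ with $m_\sigma$ odd, one tiles $\iota_{3^k}$ by three disjoint conjugates of $\iota_{m_\sigma}$-sized blocks, using Lemma~\ref{lemma three} where needed. The former---producing an odd involution of size comparable to $n$ as a universally bounded product of conjugates of $\sigma^{\pm 1}$---is the genuinely delicate step. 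The strategy is to exploit the cycle structure of $\sigma$: since $\sigma$ is odd it must contain at least one even-length cycle, and appropriate powers and conjugates of such cycles can be arranged to form odd involutions of the required size. The main technical challenge will be to carry out this construction using a number of conjugates that is uniformly bounded regardless of the specific cycle type of $\sigma$, which likely requires a case analysis on the cycle decomposition together with an iterated use of Lemma~\ref{lemma three}.
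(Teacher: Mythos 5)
Your first estimate is essentially fine: writing $\sigma=\tau_1\tau_2$ as a product of two involutions supported in $\supp(\sigma)$, splitting the even one into two disjoint odd involutions, and applying Lemma \ref{lemma three} to each of the three resulting odd involutions (each conjugate to some $\iota_{m_j}$ with $m_j$ odd and $m_j\le\lfloor\nu(\sigma)/2\rfloor\le 3^k$) does give $q_{\iota_{3^k}}(\sigma)\le 9$, modulo the trivial edge case where the even involution is the identity. This is a mild variant of the route in the paper, which instead writes $\sigma$ as a product of three conjugates of a single $\iota_{k_1}$ (Lemma \ref{lemma 11.13}) and then chains through $\iota_{9k_0}$ using Lemmas \ref{lemma three} and \ref{lemma asymmetry}.

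The genuine gap is the other direction, which you name but do not close: you never prove that $q_\sigma(\iota_{3^k})$ (equivalently, $q_\sigma(\iota_{m_\sigma})$ for some $m_\sigma$ comparable to $\nu(\sigma)$) is bounded by a universal constant, and this is the real content of the proposition --- support counting only shows such a bound is not ruled out, it does not produce the factorization. Moreover, the strategy you sketch, exploiting ``powers and conjugates of the even-length cycles'' of $\sigma$, cannot work as stated: the even-length cycles of an odd permutation may move only a bounded number of points (take $\sigma$ to be a single transposition times arbitrarily many $3$-cycles, or times one huge odd cycle), so nothing built from those cycles alone can have support comparable to $\nu(\sigma)$. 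One must use conjugates of the whole element $\sigma$ and arrange its large cycles to cancel within a uniformly bounded number of factors. This is precisely what the paper supplies through Kodama's lemma (\cite[Lemma 5]{K11}, quoted as Lemma \ref{lemma kodama 1}): $\iota_{\lfloor n/3\rfloor}$ is a product of two conjugates of an $n$-cycle; Lemma \ref{lemma key 1S} extends this to even cycles, and Lemma \ref{lemma 11.10} assembles the disjoint cycles of $\sigma$ to express some $\iota_{k_0}$ with $k_0\ge\lceil\nu(\sigma)/6\rceil$ as a product of only \emph{three} conjugates of $\sigma$, which is what makes the distance from $[\sigma]$ to $X'$, and then to $X$, uniformly bounded. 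Until you prove a statement of this strength --- a factorization of a large $\iota_m$ into a uniformly bounded number of conjugates of $\sigma^{\pm 1}$, uniform over all cycle types --- your argument does not establish coarse density.
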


\begin{proposition} \label{proposition X}
The metric space $X$ is quasi-isometric to the half line $\RR_{\ge 0}$.
\end{proposition}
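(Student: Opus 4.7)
The plan is to define an explicit map $\phi\colon X\to \RR_{\geq 0}$ by $\phi([\iota_{3^k}])=k\log 3$ and to show it is an isometry onto the $\log 3$-net $\{k\log 3 \mid k\in\ZZ_{\geq 0}\}$ of $\RR_{\geq 0}$. Since such a net is quasi-isometric to the half line, this yields the desired conclusion. The main content is therefore the computation $d([\iota_{3^k}], [\iota_{3^l}])=|l-k|\log 3$ for $k\neq l$, which I will prove by computing $q_{\iota_{3^k}}(\iota_{3^l})$ and $q_{\iota_{3^l}}(\iota_{3^k})$ separately. Note that since $\iota_{3^m}$ is an involution and has odd order as a product of transpositions, $\iota_{3^m}\notin A_\infty$, so $[\iota_{3^m}]\in\TK(S_\infty)$ is well defined, and $[\iota_{3^k}]\neq[\iota_{3^l}]$ for $k\neq l$ since these elements have distinct cycle types.

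Fix $k<l$. For the easy direction, I apply Lemma \ref{lemma three} with its parameters taken to be $3^k\leq 3^l$; both are odd, hence have the same parity, and we work in $S_\infty$, so the hypotheses are met. This gives $q_{\iota_{3^l}}(\iota_{3^k})\leq 3$. For the harder direction, I claim
\[
q_{\iota_{3^k}}(\iota_{3^l})=3^{l-k}.
\]
The upper bound is constructive: partition $\{1,2,\dots,2\cdot 3^l\}$ into $3^{l-k}$ consecutive blocks $B_1,\dots,B_{3^{l-k}}$ of length $2\cdot 3^k$ each, and let $\tau_i$ be the involution that, on the block $B_i$, pairs consecutive integers and fixes everything outside $B_i$. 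Each $\tau_i$ has cycle type $(2,2,\dots,2)$ with $3^k$ two-cycles, hence is conjugate to $\iota_{3^k}$, and $\tau_1\tau_2\cdots\tau_{3^{l-k}}=\iota_{3^l}$ by construction.

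The lower bound is the one step requiring care, and it is where the choice of powers of $3$ is convenient. Suppose $\iota_{3^l}=\sigma_1\sigma_2\cdots\sigma_m$ with each $\sigma_i$ conjugate to $\iota_{3^k}^{\pm1}=\iota_{3^k}$. Since $\mathrm{supp}(\sigma_1\cdots\sigma_m)\subseteq\bigcup_i\mathrm{supp}(\sigma_i)$ (any common fixed point of all $\sigma_i$ is a fixed point of the product), we obtain
\[
2\cdot 3^l=|\mathrm{supp}(\iota_{3^l})|\leq\sum_{i=1}^m|\mathrm{supp}(\sigma_i)|=m\cdot 2\cdot 3^k,
\]
so $m\geq 3^{l-k}$. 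Combining the three inequalities,
\[
d([\iota_{3^k}],[\iota_{3^l}])=\log\max\{q_{\iota_{3^l}}(\iota_{3^k}),\,q_{\iota_{3^k}}(\iota_{3^l})\}=\log\max\{3,\,3^{l-k}\}=(l-k)\log 3
\]
for all $k<l$, as desired.

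Hence $\phi$ is an isometric embedding of $X$ onto $\{k\log 3\mid k\in\ZZ_{\geq 0}\}\subset\RR_{\geq 0}$, which is $\log 3$-coarsely dense in $\RR_{\geq 0}$; this proves the proposition. The most delicate point in the argument is the lower bound $q_{\iota_{3^k}}(\iota_{3^l})\geq 3^{l-k}$, but the support-counting observation makes it essentially immediate; the remaining work is bookkeeping based on Lemma \ref{lemma three}.
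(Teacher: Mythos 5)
Your proof is correct and follows essentially the same route as the paper: the paper's Lemma \ref{lemma asymmetry} establishes exactly your two computations, namely $q_{\iota_{3^l}}(\iota_{3^k})\le 3$ via Lemma \ref{lemma three} and $q_{\iota_{3^k}}(\iota_{3^l})=3^{l-k}$ via the disjoint-block upper bound and the support-counting (conjugation-invariant norm $\nu$) lower bound, after which $X$ is identified isometrically with $(\log 3)\cdot\ZZ_{\ge 0}$. The only cosmetic difference is that the paper pins down the exact value $q_{\iota_{3^l}}(\iota_{3^k})=3$ by a parity argument, whereas your upper bound $\le 3$ already suffices since $3\le 3^{l-k}$.
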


\subsection{Proof of Proposition \ref{proposition X}}

We first introduce the following function $\nu \colon S_\infty \to \RR_{\ge 0}$, which is useful in the proof of Theorem \ref{theorem S_infinity}. Recall that the support of $\sigma \in S_\infty$ is the set
\[ \supp(\sigma) = \{ x \in \{ 1,2,3, \cdots \} \; | \; \sigma(i) \ne i\}.\]
Define $\nu \colon S_\infty \to \RR_{\ge 0}$ by
\begin{equation}\label{symmetric support}
\nu(\sigma) = \# \supp(\sigma).
\end{equation}
Then $\nu$ is a conjugation-invariant norm on $S_\infty$.

We use the following notation: Let $G$ be a relatively simple group and let $[x], [y] \in \TK(G)$. Set $d_{as}([x], [y]) = \log q_x(y)$. Then $d_{as}$ satisfies the triangle inequality.






\begin{lemma} \label{lemma asymmetry}
Let $m$ and $n$ be odd integers greater than 1. Then the following hold  for every $m,n$:
\begin{enumerate}[(1)]
\item $d_{as}([\iota_{mn}], [\iota_n]) = \log 3$.

\item $d_{as}([\iota_n], [\iota_{mn}]) =  \log m$.
\end{enumerate}
\end{lemma}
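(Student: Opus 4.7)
The plan is to establish both upper and lower bounds for each of the two conjugation counts. Throughout, I will use two conjugation-invariant tools on $S_\infty$: the sign homomorphism $\mathrm{sgn}\colon S_\infty\to\{\pm 1\}$, and the support-size norm $\nu(\sigma)=\#\mathrm{supp}(\sigma)$ introduced just above. Since $\iota_k$ is a product of $k$ disjoint transpositions, we have $\nu(\iota_k)=2k$ and $\mathrm{sgn}(\iota_k)=(-1)^k$. Because $m$ and $n$ are odd, so is $mn$, so both $\iota_n$ and $\iota_{mn}$ are odd permutations, and every conjugate of either element is odd as well. Also, $\iota_k^{-1}=\iota_k$, so conjugates of $\iota_k^{\pm 1}$ are just conjugates of $\iota_k$.

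For statement (1), the upper bound $q_{\iota_{mn}}(\iota_n)\le 3$ is immediate from Lemma~\ref{lemma three} applied with $l=n$ and $k=mn$: we have $n\le mn$ and both are odd (same parity), so $\iota_n$ is a product of three conjugates of $\iota_{mn}$ in $S_\infty$. The matching lower bound $q_{\iota_{mn}}(\iota_n)\ge 3$ uses the sign: a product of two conjugates of $\iota_{mn}$ is a product of two odd permutations, hence even, whereas $\iota_n$ is odd. Therefore $q_{\iota_{mn}}(\iota_n)=3$, giving $d_{as}([\iota_{mn}],[\iota_n])=\log 3$.

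For statement (2), the upper bound $q_{\iota_n}(\iota_{mn})\le m$ is a direct factorization: partition the $mn$ transpositions occurring in $\iota_{mn}$ into $m$ blocks of $n$ transpositions each, and let $\sigma_j$ be the product of the $j$-th block. Each $\sigma_j$ is a product of $n$ disjoint transpositions and hence conjugate to $\iota_n$; since the $\sigma_j$'s have pairwise disjoint supports, $\iota_{mn}=\sigma_1\sigma_2\cdots\sigma_m$. For the lower bound, suppose $\iota_{mn}=\tau_1\tau_2\cdots\tau_k$ with each $\tau_i$ conjugate to $\iota_n$. Using subadditivity and conjugation-invariance of $\nu$,
\[
2mn=\nu(\iota_{mn})\le\sum_{i=1}^k\nu(\tau_i)=2nk,
\]
so $k\ge m$. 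Hence $q_{\iota_n}(\iota_{mn})=m$ and $d_{as}([\iota_n],[\iota_{mn}])=\log m$.

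The only mildly delicate point is ruling out $q_{\iota_{mn}}(\iota_n)\le 2$ in (1); here the support norm $\nu$ alone is insufficient, and invoking the parity homomorphism is what makes the argument go through. Everything else reduces to the explicit factorization of $\iota_{mn}$ into disjoint blocks and to the already-established Lemma~\ref{lemma three}.
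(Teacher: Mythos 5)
Your proof is correct and takes essentially the same route as the paper's: Lemma~\ref{lemma three} for the upper bound in (1) with parity forcing at least three factors, and the explicit block factorization together with subadditivity and conjugation-invariance of the support norm $\nu$ for (2). The only point the paper states explicitly that you leave implicit is that $q_{\iota_{mn}}(\iota_n)=1$ is impossible because $\iota_n$ and $\iota_{mn}$ are not conjugate (their supports have sizes $2n \neq 2mn$ since $m>1$), which your own observation $\nu(\iota_k)=2k$ already settles.
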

\begin{proof}
We first show (1). By Lemma \ref{lemma three}, we have that $d_{as}([\iota_{mn}], [\iota_{n}]) \le \log 3$. Since $\iota_{mn}$ and $\iota_{n}$ are odd permutations and are not conjugate, we need at least three elements to express $\iota_n$ as a product of elements which are conjugate to $\iota_{mn}$.

Next we show (2). It is clear that $\iota_{mn}$ is a product of $m$ elements which are conjugate to $\iota_{n}$. Since $\nu(\iota_{mn}) = 2 mn$, $\nu(\iota_{n}) = 2 n$ and $\nu$ is a conjugation invariant norm, we have that we need $m$ elements to express $\iota_{mn}$ as a product of conjugates of $\iota_{n}$.
\end{proof}

Lemma \ref{lemma asymmetry} immediately implies the following:

\begin{corollary} \label{corollary skeleton}
Let $l$ and $k$ be positive integers. Then we have $d([\iota_{3^k}], [\iota_{3^k}]) = |k - l| \cdot \log 3$.
\end{corollary}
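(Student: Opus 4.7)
The plan is to deduce this directly from Lemma \ref{lemma asymmetry} by iteration. By the symmetry of $d$ I may assume $l \leq k$, and the case $l = k$ is trivial, so suppose $l < k$. Recall that
\[
d([\sigma],[\tau]) = \max\{d_{as}([\sigma],[\tau]),\, d_{as}([\tau],[\sigma])\},
\]
where $d_{as}([x],[y]) = \log q_x(y)$, and that $d_{as}$ satisfies the triangle inequality (as noted immediately before Lemma \ref{lemma asymmetry}).

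For the lower bound, I would apply Lemma \ref{lemma asymmetry} (2) with $n = 3^l$ and $m = 3^{k-l}$. Both are odd integers greater than $1$ because $l \geq 1$ and $k > l$. This yields $q_{\iota_{3^l}}(\iota_{3^k}) = 3^{k-l}$, so
\[
d([\iota_{3^k}], [\iota_{3^l}]) \;\geq\; d_{as}([\iota_{3^l}], [\iota_{3^k}]) \;=\; (k-l) \log 3.
\]

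For the upper bound, I would apply Lemma \ref{lemma asymmetry} (1) iteratively: for every $j$ with $l \leq j < k$, taking $n = 3^j$ and $m = 3$ gives $d_{as}([\iota_{3^{j+1}}], [\iota_{3^j}]) = \log 3$. Summing these via the triangle inequality for $d_{as}$ yields $d_{as}([\iota_{3^k}], [\iota_{3^l}]) \leq (k-l)\log 3$. Since we already have $d_{as}([\iota_{3^l}], [\iota_{3^k}]) = (k-l)\log 3$, taking the maximum of the two asymmetric distances gives $d([\iota_{3^k}], [\iota_{3^l}]) \leq (k-l)\log 3$, completing the equality.

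I do not anticipate any genuine obstacle: all the work is packaged inside Lemma \ref{lemma asymmetry}, which combines the explicit three-factor expression from Lemma \ref{lemma three} with the support-based conjugation-invariant norm $\nu$ used to bound $q_{\iota_n}(\iota_{mn})$ from below.
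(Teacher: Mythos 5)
Your proof is correct and follows the paper's route: the corollary is stated there as an immediate consequence of Lemma \ref{lemma asymmetry}, using part (2) with $n=3^l$, $m=3^{k-l}$ for the lower bound and part (1) for the other asymmetric distance, then taking the maximum. Your iterated triangle-inequality step for the upper bound is harmless but unnecessary, since Lemma \ref{lemma asymmetry}(1) applied directly with $n=3^l$ and $m=3^{k-l}$ already gives $d_{as}([\iota_{3^k}],[\iota_{3^l}])=\log 3\leq (k-l)\log 3$ in one step.
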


\begin{proof}[Proof of Proposition \ref{proposition X}]
By Corollary \ref{corollary skeleton}, $X$ is isometric to $(\log 3) \cdot \ZZ_{\ge 0}$. Since $(\log 3) \cdot \ZZ_{\ge 0}$ and $\RR_{\ge 0}$ are quasi-isometric, this completes the proof.
\end{proof}

\subsection{Proof of Proposition \ref{proposition coarsely dense}}

Let $\gamma_n$ denote the cyclic permutation: 
\[\gamma_n = (1,2, \cdots, n).\]

\begin{lemma} \label{lemma key 1S}
Let $n$ be an integer greater than $1$. Then there exist $k \ge  \Big\lceil \dfrac{n}{6} \Big\rceil$ and $\sigma, \sigma', \sigma'' \in S_\infty$ which are conjugate to $\gamma_n$ such that
\[ \iota_k = \sigma \sigma' \sigma''.\]
\end{lemma}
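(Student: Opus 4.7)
The plan is to give an explicit construction achieving $k = n - 1$, which always satisfies $n - 1 \ge \lceil n/6 \rceil$ for every $n \ge 2$. Specifically, I will take the three $n$-cycles
\[
\sigma = (1,\, 2n-1,\, 2n-3,\, \ldots,\, 3), \quad \sigma' = (1,\, n+1,\, n+2,\, \ldots,\, 2n-1), \quad \sigma'' = (1, 2, \ldots, n)
\]
in $S_\infty$. Each has length $n$ (there are exactly $n$ odd numbers in $\{1,3,\ldots,2n-1\}$, and $|\{1\}\cup\{n+1,\ldots,2n-1\}| = n$), so each is conjugate to $\gamma_n$; moreover their supports meet pairwise only in $\{1\}$.

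The core of the proof is the direct verification that $\sigma \sigma' \sigma'' = \iota_{n-1}$, which I would carry out by case analysis on $j \in \{1, \ldots, 2n-1\}$. For $1 \le j \le n-1$: if $j$ is odd then $\sigma''(j) = j+1$ is even and lies outside the supports of $\sigma'$ and $\sigma$, giving $j \mapsto j+1$; if $j$ is even then $\sigma''(j) = j+1$ is odd with $j+1 \le n$, so $\sigma'$ fixes it while $\sigma$ sends the odd number $j+1$ to the preceding odd number $j-1$ in its cycle, giving $j \mapsto j-1$. For $j = n$, $\sigma''(n) = 1$, then $\sigma'(1) = n+1$, and then $\sigma$ either fixes $n+1$ (when $n$ is odd) or sends it to $n-1$ (when $n$ is even). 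For $n+1 \le j \le 2n-2$, $\sigma''$ fixes $j$, $\sigma'(j) = j+1$, and $\sigma$ either fixes $j+1$ (if $j+1$ is even) or steps backward to $j-1$ (if $j+1$ is odd), producing the remaining transpositions. Finally $\sigma''$ fixes $2n-1$, $\sigma'(2n-1) = 1$, and $\sigma(1) = 2n-1$, so $2n-1$ is fixed.

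Assembling these cases shows that $\sigma \sigma' \sigma''$ swaps $2i-1$ with $2i$ for each $i = 1, \ldots, n-1$ and fixes $2n-1$, so it is exactly $\iota_{n-1}$. The only delicate point is the bookkeeping at the boundary $j = n$, where the composition crosses from the $\sigma''$-cycle into the $\sigma'$-cycle and behaves slightly differently depending on the parity of $n$; in both parity cases the remaining transpositions fill in consistently to give $\iota_{n-1}$. Since $n-1 \ge \lceil n/6 \rceil$ for every $n \ge 2$, the triple $(\sigma, \sigma', \sigma'')$ witnesses the lemma with $k = n - 1$.
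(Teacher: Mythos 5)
Your construction is correct, and it takes a genuinely different route from the paper's proof. I checked your case analysis (with the convention that $\sigma''$ acts first): odd $j \le 2n-2$ goes to $j+1$, even $j \le 2n-2$ goes to $j-1$, the boundary case $j=n$ falls into the correct pattern in both parities, and $2n-1$ is fixed, so indeed $\sigma\sigma'\sigma'' = \iota_{n-1}$; each factor is an $n$-cycle, hence conjugate to $\gamma_n$, and $n-1 \ge \lceil n/6 \rceil$ for every $n \ge 2$. The paper instead invokes Kodama's lemma (Lemma \ref{lemma kodama 1}, from \cite{K11}), which writes $\iota_{\lfloor n/3 \rfloor}$ as a product of \emph{two} conjugates of $\gamma_n$, and then argues by parity: for odd $n$ it uses that $\gamma_n$ is conjugate to $\gamma_n^2$ to pass from two factors to three, for $n=2$ it takes $\iota_3$ as a product of three transpositions, and for even $n \ge 4$ it uses an explicit identity exhibiting the disjoint product of an $n$-cycle and a transposition as a product of two conjugates of $\gamma_n$ before applying Lemma \ref{lemma kodama 1}. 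Your argument is self-contained (no appeal to \cite{K11}), avoids the parity dichotomy, and produces the larger value $k=n-1$; since the lemma and its later applications only require a lower bound on $k$ (and the parity of $k$ is forced to equal that of $n-1$ by a sign count in any such factorization), the larger $k$ is harmless and arguably gives a cleaner statement, at the cost of a somewhat longer hand verification. One small inaccuracy: your remark that the supports of $\sigma$, $\sigma'$, $\sigma''$ meet pairwise only in $\{1\}$ is false, since $\supp(\sigma) \cap \supp(\sigma')$ contains every odd number in $\{n+1,\dots,2n-1\}$ and $\supp(\sigma) \cap \supp(\sigma'')$ contains every odd number at most $n$; however, your element-by-element computation never uses this claim, so the proof is unaffected.
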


To see this, we use the following lemma by 
the third author.

\begin{lemma}[{see \cite[Lemma 5]{K11}}]\label{lemma kodama 1}
Let $n$ be an integer at least $3$. Let $k = \Big\lfloor \dfrac{n}{3} \Big\rfloor$. Then there exist elements $\sigma$ and $\sigma'$ which are conjugate to $\gamma_n$ such that
\[ \iota_k = \sigma \sigma'.\]
\end{lemma}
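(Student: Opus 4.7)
My plan is to prove this by an explicit construction: I would exhibit an $n$-cycle $\sigma$ and set $\sigma' := \sigma^{-1}\iota_k$, so that $\sigma\sigma' = \iota_k$ holds tautologically. Since every $n$-cycle in $S_\infty$ is conjugate to $\gamma_n$, it then suffices to verify that $\sigma^{-1}\iota_k$ is itself a single $n$-cycle. Concretely, I would take
\[ \sigma = (1,\, 3,\, 5,\, \ldots,\, 2k-1,\ 2,\, 4,\, 6,\, \ldots,\, 2k,\ 2k+1,\, 2k+2,\, \ldots,\, n), \]
which lists first the odd elements of $\supp(\iota_k)$, then the even elements of $\supp(\iota_k)$, and finally the fixed points of $\iota_k$.

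The next step is to compute $\sigma' = \sigma^{-1}\iota_k$ block by block. For $j = 2i-1$ in the odd-support block, $\sigma'(j)$ equals the $\sigma$-predecessor of $2i$, namely $2i-2$ for $i>1$ and $2k-1$ for $i = 1$; for $j = 2i$ in the even-support block, $\sigma'(j)$ equals the $\sigma$-predecessor of $2i-1$, namely $2i-3$ for $i>1$ and $n$ for $i = 1$; and for $j > 2k$, $\sigma'(j) = j-1$, with $\sigma'(2k+1) = 2k$. Tracing the orbit of $1$ under $\sigma'$ with this piecewise description yields a single zig-zag path that visits every element of $\{1,\ldots,n\}$ exactly once (one can check the small case $n=6$, $k=2$ by hand: $1 \to 3 \to 2 \to 6 \to 5 \to 4 \to 1$), which shows $\sigma^{-1}\iota_k$ is an $n$-cycle.

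The principal difficulty is the single-orbit verification: one must track the orbit carefully across transitions between the three blocks and at the cyclic wrap-around, and check that the closure occurs exactly after $n$ steps rather than splitting into multiple cycles. A parity count on cycle lengths also shows that a factorization of $\iota_k$ into two $n$-cycles exists only when $k$ is even (since each $n$-cycle has sign $(-1)^{n-1}$, the product $\sigma\sigma'$ is always even); when $\lfloor n/3 \rfloor$ has the wrong parity, I would modify the construction — for instance, by trading one fixed point into the transposition pattern or by shifting the boundary between blocks — to restore the single-orbit property.
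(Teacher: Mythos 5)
The paper itself offers no proof of this lemma---it is quoted from \cite[Lemma 5]{K11}---so there is no in-paper argument to compare yours against; I can only assess the proposal on its own terms. Your core idea is the natural one and, as far as it goes, correct: setting $\sigma' := \sigma^{-1}\iota_k$ reduces everything to checking that $\sigma^{-1}\iota_k$ is a single $n$-cycle, and for the interleaved cycle $\sigma = (1,3,\dots,2k-1,\,2,4,\dots,2k,\,2k+1,\dots,n)$ the orbit of $1$ does close up after exactly $n$ steps whenever $k$ is even. Your $n=6$ check generalizes; for instance, for $n=12$, $k=4$ one gets the single $12$-cycle $1\to 7\to 6\to 3\to 2\to 12\to 11\to 10\to 9\to 8\to 5\to 4\to 1$.

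The gap is in your final paragraph, and it is fatal to the statement as printed. The parity obstruction you observe is absolute, not a defect of your particular $\sigma$: the permutation $\iota_k$ has sign $(-1)^k$, while a product of two $n$-cycles always has sign $(-1)^{2(n-1)}=+1$, so for $k$ odd there is \emph{no} factorization of $\iota_k$ as a product of two conjugates of $\gamma_n$, and no amount of ``trading fixed points into the transposition pattern'' or ``shifting block boundaries'' can change this---any modified product of two $n$-cycles is still even, while the target $\iota_k$ is fixed and odd. Since $k=\lfloor n/3\rfloor$ is odd precisely for $n\in\{3,4,5\}$, $\{9,10,11\}$, $\{15,16,17\}$, and so on, your argument cannot be completed for those $n$; indeed your own construction visibly splits there (for $n=9$, $k=3$ one obtains $\sigma^{-1}\iota_3=(1,5,4)(2,9,8,7,6,3)$). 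The honest conclusions are: (i) prove the lemma under the necessary additional hypothesis that $\lfloor n/3\rfloor$ is even, where your construction works, and (ii) flag that the statement as transcribed here from \cite{K11} must implicitly carry such a parity restriction (in \cite{K11} the ambient group is $A_\infty$, where both sides are even permutations by fiat), since the unrestricted claim for all $n\ge 3$ is false. Promising to ``restore the single-orbit property'' in the odd case is not a repair, because no repair exists; the same parity point deserves attention where the lemma is invoked in the proof of Lemma \ref{lemma key 1S}.
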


\begin{proof}[Proof of Lemma \ref{lemma key 1S}]
If $n$ is odd, then $\gamma_n^2$ and $\gamma_n$ are conjugate and $\lceil n/6 \rceil \le \lfloor n/3 \rfloor$ for $n \ge 3$, the proof is completed by Lemma \ref{lemma kodama 1}.  
If $n=2$, we can take $k=3$ since $\iota_3 = (1,2)(3,4)(5,6)$ and each transposition is conjugate to $\gamma_2$. 
Suppose that $n$ is even and 
 $n = 2m\geq 4$.
Since
\[ (3,1, 5,6,7 \cdots, 2m+2) (1,2,3, \cdots, 2m) = (1,2)(3,4,6,8, \cdots ,2m, 5, 7, \cdots, 2m+1, 2m + 2),\]
we have that the disjoint union of $\gamma_n$ and $\iota_1$ is a product of two elements conjugate to $\gamma_n$. Hence Lemma \ref{lemma kodama 1} completes the proof.
\end{proof}

\begin{lemma} \label{lemma 11.10}
Let $\sigma \in S_\infty$. Then there exist $\sigma^{(1)}$, $\sigma^{(2)}$ and $\sigma^{(3)}$ which are conjugate to $\sigma$ and an integer $k \ge \lceil \nu(\sigma) / 6 \rceil$ such that
\[ \iota_k = \sigma^{(1)} \sigma^{(2)} \sigma^{(3)}.\]
\end{lemma}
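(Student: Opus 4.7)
The plan is to reduce to Lemma \ref{lemma key 1S} by decomposing $\sigma$ into its non-trivial cycles and applying that lemma to each cycle separately on disjoint regions of $\NN$. Write $\sigma = \tau_1 \tau_2 \cdots \tau_r$ as its disjoint cycle decomposition, keeping only cycles of length at least $2$; let $n_j$ denote the length of $\tau_j$, so that $\nu(\sigma) = \sum_j n_j$. Using the fact that $\{1, 2, 3, \ldots\}$ is infinite, I would choose pairwise disjoint finite subsets $I_1, \ldots, I_r \subset \NN$, each sufficiently large to house the supports produced by Lemma \ref{lemma key 1S} applied to a cycle of length $n_j$ (the sizes needed can be read off from that lemma's proof).

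For each $j$, Lemma \ref{lemma key 1S} produces permutations $\tau_j^{(1)}, \tau_j^{(2)}, \tau_j^{(3)} \in S_\infty$ conjugate to $\gamma_{n_j}$ and an integer $k_j \geq \lceil n_j/6 \rceil$ with $\iota_{k_j} = \tau_j^{(1)}\tau_j^{(2)}\tau_j^{(3)}$; by conjugating the whole triple by a permutation that carries the relevant support into $I_j$, I may assume each $\tau_j^{(i)}$ is supported in $I_j$, and that their product is some permutation $\eta_j$ whose support is a set of $2k_j$ points in $I_j$ paired into $k_j$ transpositions. Since the $I_j$'s are disjoint, define $\sigma^{(i)} = \prod_j \tau_j^{(i)}$ for $i = 1,2,3$. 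Each $\sigma^{(i)}$ has the same cycle type as $\sigma$, hence is conjugate to $\sigma$ in $S_\infty$, and
\[
\sigma^{(1)}\sigma^{(2)}\sigma^{(3)} \;=\; \prod_{j=1}^{r} \tau_j^{(1)}\tau_j^{(2)}\tau_j^{(3)} \;=\; \prod_{j=1}^{r} \eta_j,
\]
which is a product of $k := \sum_j k_j$ disjoint transpositions.

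Finally, set $k = \sum_j k_j$. Superadditivity of the ceiling function gives $k = \sum_j k_j \geq \sum_j \lceil n_j/6 \rceil \geq \lceil \nu(\sigma)/6 \rceil$. The permutation $\prod_j \eta_j$ is a product of $k$ disjoint transpositions on $\NN$, so it is conjugate to $\iota_k = (1,2)(3,4)\cdots(2k-1,2k)$ via some $\rho \in S_\infty$. Replacing $\sigma^{(i)}$ by $\rho \sigma^{(i)} \rho^{-1}$ (which is still conjugate to $\sigma$) turns the product into exactly $\iota_k$, completing the proof. The only real bookkeeping issue is arranging the disjointness of supports and the final simultaneous conjugation, both of which are straightforward in the infinite group $S_\infty$; the combinatorial content is already packaged in Lemma \ref{lemma key 1S}.
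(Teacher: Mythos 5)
Your proof is correct and follows essentially the same route as the paper: decompose $\sigma$ into disjoint cycles, apply Lemma \ref{lemma key 1S} to each cycle, and combine using the superadditivity $\sum_j \lceil n_j/6 \rceil \ge \lceil \nu(\sigma)/6 \rceil$. Your extra bookkeeping (placing the triples on disjoint regions and conjugating the resulting product of $k$ disjoint transpositions to the standard $\iota_k$) just makes explicit the details the paper leaves implicit.
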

\begin{proof}
Let $\sigma_1, \cdots, \sigma_s$ be cyclic permutations with disjoint supports such that $\sigma = \sigma_1 \cdots \sigma_s$.  By Lemma \ref{lemma key 1S},  for each $i$, there exists an integer $k_i \ge \lceil \nu(\sigma_i) / 6 \rceil$ such that $\iota_{k_i}$ is a product of three elements which are conjugate to $\sigma_i$. Then we have
\[ \Big\lceil \frac{\nu(\sigma)}{6} \Big\rceil \le \Big\lceil \frac{\nu(\sigma_1)}{6} \Big\rceil + \cdots + \Big\lceil \frac{\nu(\sigma_s)}{6} \Big\rceil \le k_1 + \cdots + k_s = k.\]
This completes the proof.
\end{proof}

Next we consider how we express a cyclic permutation as a product of conjugates of $\iota_{k}$.

\begin{lemma}[{see \cite[Sublemma 8]{K11}}] \label{lemma kodama 2}
The following hold:
\begin{enumerate}[(1)]
\item $\gamma_{2n+2}$ is a product of a conjugate of $\iota_n$ and a conjugate of $\iota_{n+1}$.

\item $\gamma_{2n+1}$ is a product of two elements which are conjugate to $\iota_n$
\end{enumerate}
\end{lemma}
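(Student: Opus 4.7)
The plan is to exhibit each cyclic permutation as an explicit product of two involutions with prescribed cycle structure. Since any product of $k$ disjoint transpositions in $S_\infty$ is conjugate to $\iota_k$, it will be enough to write down involutions with the correct number of transpositions and then verify the product by a direct computation. This is the classical ``folding of a cycle'' trick.

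For part (2), I would take
\[
\sigma = (1,2n{+}1)(2,2n)\cdots(n,n{+}2), \qquad \tau = (1,2n)(2,2n{-}1)\cdots(n,n{+}1),
\]
each being a product of exactly $n$ disjoint transpositions of $\{1,\dots,2n{+}1\}$; thus each is conjugate to $\iota_n$. The verification $\sigma\tau=\gamma_{2n+1}$ then reduces to computing $\sigma(\tau(i))$ using $\tau(i)=2n{+}1-i$ for $i\le 2n$ (with $2n{+}1$ fixed) and $\sigma(j)=2n{+}2-j$ for $j\ne n{+}1$ (with $n{+}1$ fixed). The ``generic'' case gives $\sigma\tau(i)=i+1$, and the two special indices $i=n$ and $i=2n{+}1$ give the correct values $n{+}1$ and $1$ respectively.

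For part (1), I would take
\[
\sigma = (1,2n{+}2)(2,2n{+}1)\cdots(n{+}1,n{+}2), \qquad \tau = (1,2n{+}1)(2,2n)\cdots(n,n{+}2),
\]
so that $\sigma$ is a product of $n{+}1$ disjoint transpositions (hence conjugate to $\iota_{n+1}$), while $\tau$ is a product of $n$ disjoint transpositions fixing $n{+}1$ and $2n{+}2$ (hence conjugate to $\iota_n$). The analogous direct computation---using $\tau(i)=2n{+}2-i$ off the fixed points and $\sigma(j)=2n{+}3-j$---yields $\sigma\tau=\gamma_{2n+2}$.

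The only real step is the finite case-check that $\sigma\tau$ agrees with the cyclic permutation at each input; the only potential source of error is careful bookkeeping at the ``boundary'' indices where one of the involutions fixes a point (namely $i=n,\,2n{+}1$ in part (2) and $i=n{+}1,\,2n{+}2$ in part (1)). No genuine obstacle is expected; alternatively, one could invoke \cite[Sublemma 8]{K11} directly.
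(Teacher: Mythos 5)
Your factorizations are correct: in part (2) both involutions consist of $n$ disjoint transpositions and the computation $\sigma\tau(i)=i+1$ for the generic indices, together with the checks at $i=n$ and $i=2n+1$, indeed gives $\gamma_{2n+1}$, while in part (1) the analogous computation gives $\gamma_{2n+2}$ as a product of a conjugate of $\iota_{n+1}$ and a conjugate of $\iota_n$ (the order is immaterial, since $ab=b(b^{-1}ab)$). The paper offers no argument here---it simply cites \cite[Sublemma 8]{K11}---and your explicit ``folding'' of the cycle into two involutions is precisely the standard proof of that sublemma, so your write-up is a correct, self-contained substitute for the citation.
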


\begin{lemma} \label{lemma key 2S}
Let $n$ be an integer greater than $1$. Then there exists a positive integer $k$ such that $k \le n$ and $\gamma_n$ is a product of three elements which are conjugate to $\iota_k$.
\end{lemma}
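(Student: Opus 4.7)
The plan is to set $k = n - 1$ and construct three conjugates of $\iota_{n-1}$ whose product equals $\gamma_n$. The key inputs are Lemma \ref{lemma kodama 2}, which gives $\gamma_n$ as a product of two conjugates of (possibly different) $\iota$'s, together with a simple telescoping trick that uses two auxiliary commuting involutions $\sigma, \sigma'$ to upgrade this two-factor decomposition to a three-factor one of uniform conjugacy type.

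To carry this out, I would first apply Lemma \ref{lemma kodama 2} to write $\gamma_n = \tau_1 \tau_2$, where $\tau_1$ is conjugate to $\iota_a$ and $\tau_2$ is conjugate to $\iota_b$, with $(a,b) = (m,m)$ when $n = 2m+1$ is odd and $(a,b) = (m, m+1)$ when $n = 2m+2$ is even; in either case $a + b = n - 1 = k$. I would then pick fixed-point-free involutions $\sigma$ and $\sigma'$ in $S_\infty$ consisting of $k - a$ and $k - b$ disjoint transpositions respectively, with the additional property that the four supports $\supp(\tau_1)$, $\supp(\tau_2)$, $\supp(\sigma)$, $\supp(\sigma')$ are pairwise disjoint. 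Such choices are possible because $S_\infty$ has infinitely many symbols, and note that $(k-a) + (k-b) = 2(n - 1) - (n - 1) = n - 1 = k$.

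Finally, I would set $C_1 = \tau_1 \sigma$, $C_2 = \sigma \sigma'$, and $C_3 = \sigma' \tau_2$. By the disjointness of supports, each $C_i$ is a product of exactly $k = n - 1$ disjoint transpositions and is therefore conjugate to $\iota_{n-1}$. Using $\sigma^2 = (\sigma')^2 = e$, we obtain
\[
C_1 C_2 C_3 \;=\; \tau_1 \sigma \cdot \sigma \sigma' \cdot \sigma' \tau_2 \;=\; \tau_1 \sigma^2 (\sigma')^2 \tau_2 \;=\; \tau_1 \tau_2 \;=\; \gamma_n,
\]
which completes the proof since $k = n-1 \le n$. There is no substantial obstacle in this argument; the only thing to verify is that the counts $k - a$ and $k - b$ are non-negative, which is immediate from the explicit values above, and the degenerate case $n = 2$ (where $\tau_1 = \sigma' = e$) is handled uniformly by the same construction.
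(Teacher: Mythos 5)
Your proof is correct and takes essentially the same route as the paper: both apply Lemma \ref{lemma kodama 2} to split $\gamma_n$ into two conjugates of $\iota_m$ (resp.\ of $\iota_m$ and $\iota_{m+1}$), then pad with disjoint transpositions so that all three factors become conjugates of $\iota_{n-1}$, the extra factor being the product of the padding involutions. The only differences are cosmetic: you order the factors $(\tau_1\sigma)(\sigma\sigma')(\sigma'\tau_2)$ so the paddings cancel adjacently, whereas the paper places the padding-only factor last and uses disjoint-support commutativity, and the paper treats $n=2$ separately via $\gamma_2=\iota_1=\iota_1^3$ rather than as a degenerate instance of the general construction.
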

\begin{proof}
Suppose that $n$ is odd and set $n = 2m + 1$. Then $\gamma_{2m+1}$ is a product of two elements $\iota^{(1)}$ and $\iota^{(2)}$ which are conjugate to $\iota_m$ ((2) of Lemma \ref{lemma kodama 2}). Let $x_1, \cdots, x_m, y_1, \cdots, y_m, z_1, \cdots, z_m, w_1, \cdots, w_m$ be distinct positive integers which are not contained in $\supp(\iota^{(1)}) \cup \supp(\iota^{(2)})$. 
 Set 
\[ \iota = \iota^{(1)}(x_1,y_1) (x_2, y_2) \cdots (x_m, y_m), \quad \iota' = \iota^{(2)}(z_1, w_1) (z_2, w_2) \cdots (z_m, w_m),\]
\[ \iota'' = (x_1,y_1) \cdots (x_m, y_m) (z_1, w_1) \cdots (z_m, w_m).\]
Then these are conjugate to $\iota_{2m}$ and we have
\[ \iota \iota' \iota'' = \iota^{(1)} \iota^{(2)} = {\gamma}_{2m+1}.\]

Next suppose that $n$ is even and set $n = 2m+2$. 
 The case $m=0$ is obvious: $\gamma_2=\iota_1=\iota_1^3$. Let $m \geq 1$. 
Then $\gamma_{2m+2}$ is a product $\iota^{(1)} \iota^{(2)}$ of $\iota^{(1)}$ and $\iota^{(2)}$, where $\iota^{(1)}$ is conjugate to $\iota_m$ and $\iota^{(2)}$ is a conjugate of $\iota_{m+1}$ ((1) of Lemma \ref{lemma kodama 2}). 
Let $x_1, \cdots, x_{m+1}$, $y_1, \cdots, y_{m+1}$, $z_1, \cdots, z_m$, $w_1, \cdots, w_m$
be distinct positive integers which are not contained in $\supp(\iota^{(1)}) \cup \supp(\iota^{(2)})$. 
Then set
\[ \iota = \iota^{(1)} (x_1, y_1) \cdots (x_{m+1}, y_{m+1}),\quad \iota' = \iota^{(2)} (z_1, w_1) \cdots (z_m, w_m), \]
\[ \iota'' = (x_1,y_1) \cdots (x_{m+1}, y_{m+1}) (z_1, w_1) \cdots (z_m, w_m).\]
These are conjugate to  $\iota_{2m+1}$  and we have
\[ \iota \iota' \iota'' = \iota^{(1)} \iota^{(2)} = {\gamma}_{2m+2}.\]
This completes the proof.
\end{proof}

By Lemma \ref{lemma key 2S}, we have the following:

\begin{lemma} \label{lemma 11.13}
For every $\sigma \in S_\infty$, there exists a positive integer $k$ such that $k \le \nu(\sigma)$ and $\sigma$ is a product of three elements which are conjugate to $\iota_k$.
\end{lemma}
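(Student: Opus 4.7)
The plan is to apply Lemma \ref{lemma key 2S} cycle by cycle and then assemble the results. First I would decompose $\sigma$ into a product $\sigma = \sigma_1 \cdots \sigma_s$ of cyclic permutations with pairwise disjoint supports, discarding fixed points; writing $n_i = \#\supp(\sigma_i) \ge 2$, we have $\nu(\sigma) = \sum_{i=1}^s n_i$. Since an arbitrary $n_i$-cycle is conjugate to $\gamma_{n_i}$, Lemma \ref{lemma key 2S} yields a positive integer $k_i \le n_i$ together with elements $\tau_i^{(1)}, \tau_i^{(2)}, \tau_i^{(3)}$ of $S_\infty$, each conjugate to $\iota_{k_i}$, such that $\sigma_i = \tau_i^{(1)} \tau_i^{(2)} \tau_i^{(3)}$.

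The only subtlety is that, as the proof of Lemma \ref{lemma key 2S} shows, the supports of the factors $\tau_i^{(j)}$ typically extend beyond $\supp(\sigma_i)$ by auxiliary symbols. Since $S_\infty$ has infinitely many symbols at our disposal, I would choose these auxiliary symbols inductively in $i$, always taking fresh integers, so that for $i \ne i'$ the set $\supp(\tau_i^{(1)}) \cup \supp(\tau_i^{(2)}) \cup \supp(\tau_i^{(3)})$ is disjoint from $\supp(\tau_{i'}^{(1)}) \cup \supp(\tau_{i'}^{(2)}) \cup \supp(\tau_{i'}^{(3)})$. With this choice, $\tau_i^{(j)}$ commutes with $\tau_{i'}^{(j')}$ whenever $i \ne i'$.

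For each $j \in \{1,2,3\}$ define $\tau^{(j)} = \prod_{i=1}^s \tau_i^{(j)}$. Because the factors $\tau_i^{(j)}$ for fixed $j$ and varying $i$ have pairwise disjoint supports, $\tau^{(j)}$ is a product of disjoint involutions whose total support has cardinality $2(k_1 + \cdots + k_s)$; hence $\tau^{(j)}$ is conjugate to $\iota_k$, where $k := k_1 + \cdots + k_s$. Using the commutativity secured in the previous step, I can rearrange
\[ \sigma = \prod_{i=1}^s \sigma_i = \prod_{i=1}^s \tau_i^{(1)} \tau_i^{(2)} \tau_i^{(3)} = \tau^{(1)} \tau^{(2)} \tau^{(3)}, \]
which exhibits $\sigma$ as a product of three conjugates of $\iota_k$. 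Finally $k = \sum_i k_i \le \sum_i n_i = \nu(\sigma)$, completing the argument.

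There is essentially no hard conceptual step; the main obstacle, such as it is, consists in the bookkeeping required to show that the auxiliary symbols produced by Lemma \ref{lemma key 2S} can be selected to keep the supports of the families $\{\tau_i^{(j)}\}_j$ disjoint across distinct cycles, which is immediate from the fact that the underlying set of $S_\infty$ is infinite.
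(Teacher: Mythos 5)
Your argument is correct and is exactly the route the paper intends: the paper states Lemma \ref{lemma 11.13} as an immediate consequence of Lemma \ref{lemma key 2S}, obtained by decomposing $\sigma$ into disjoint cycles and combining the three factors cycle by cycle with fresh auxiliary symbols, precisely as in the paper's proof of Lemma \ref{lemma 11.10}. Your explicit bookkeeping of the auxiliary supports and the identity $k=\sum_i k_i\le\nu(\sigma)$ fills in the routine details the paper leaves implicit (note only that, as in the paper, the statement tacitly assumes $\sigma\neq 1$, which is harmless since it is applied to odd permutations).
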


Define a subset $X'$ of $\TK(S_\infty)$ by
\[ X' = \{ [\iota_k] \; | \;\textrm{$k$ is a positive odd number}\}.\]

\begin{lemma} \label{lemma coarsely dense}
$X$ is coarsely dense in $X'$.
\end{lemma}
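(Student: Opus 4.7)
The plan is to prove that for every positive odd integer $n$, there is a non-negative integer $k$ with $d([\iota_n], [\iota_{3^k}]) \leq \log 3$. Choose $k$ to be the smallest non-negative integer with $n \leq 3^k$: either $n = 1$ and $k = 0$, or $3^{k-1} < n \leq 3^k$. The target is then $[\iota_{3^k}] \in X$.

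The bound $q_{\iota_{3^k}}(\iota_n) \leq 3$ is immediate from Lemma \ref{lemma three}, because both $n$ and $3^k$ are odd and $n \leq 3^k$. For the reverse inequality $q_{\iota_n}(\iota_{3^k}) \leq 3$, set $r = 3^k - n$. Since $n$ and $3^k$ are both odd, $r$ is even, and the minimality of $k$ (i.e.\ $3^{k-1} < n$) forces $r < 2n$. Up to symmetrized conjugacy, one may write $\iota_{3^k}$ as a disjoint product $\iota_n \cdot \tau$ inside $S_\infty$, where $\tau$ is supported on fresh symbols and is conjugate to $\iota_r$. It thus remains to express $\iota_r$ as a product of at most two conjugates of $\iota_n$.

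For this last step, write $\iota_n = T_1 \cdots T_n$ with $T_j = (2j-1, 2j)$, and set $\iota_n' = T_{a+1} T_{a+2} \cdots T_{a+n}$ for $a = r/2$, using integers beyond $2n$ as needed (which $S_\infty$ permits). Both factors are conjugate to $\iota_n$. Since all transpositions in question are pairwise disjoint and commute, the overlap $T_{a+1} \cdots T_n$ appears in both factors and is an involution, so it cancels, leaving
\[
\iota_n \iota_n' = T_1 \cdots T_a \cdot T_{n+1} \cdots T_{n+a},
\]
a product of $2a = r$ disjoint transpositions, which is conjugate to $\iota_r$. Combining the two bounds yields $d([\iota_n], [\iota_{3^k}]) \leq \log 3$, and hence $X$ is coarsely dense in $X'$ with constant $\log 3$.

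The argument is entirely elementary and combinatorial inside $S_\infty$; the only mildly delicate point is the overlap-cancellation computation $\iota_n \iota_n' = \iota_{2a}$, which amounts to careful bookkeeping of commuting transpositions and presents no real obstacle. The edge cases $n = 1$ (where $k = 0$ and the distance is $0$) and $r = 0$ (where $\iota_{3^k} = \iota_n$ already) are absorbed trivially by the same estimates.
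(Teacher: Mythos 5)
Your proof is correct, and it takes a route that differs from the paper's in one half of the estimate. The paper rounds \emph{down}: given odd $k$ it picks $l$ with $3^l \le k < 3^{l+1}$, gets $d_{as}([\iota_k],[\iota_{3^l}]) \le \log 3$ from Lemma \ref{lemma three}, and then bounds the other direction by the triangle inequality through the intermediate point $[\iota_{3^{l+1}}]$, using Lemma \ref{lemma asymmetry} (2) together with Lemma \ref{lemma three}, which yields $d_{as}([\iota_{3^l}],[\iota_k]) \le 2\log 3$. You instead round \emph{up} to the next power $3^k \ge n$, use Lemma \ref{lemma three} for the direction $q_{\iota_{3^k}}(\iota_n)\le 3$, and replace the triangle-inequality step by a direct combinatorial decomposition: since $3^{k-1} < n$ forces $r = 3^k - n < 2n$ (and $r$ is even), you write $\iota_{3^k}$, up to conjugacy, as $\iota_n$ times a disjoint product of $r$ transpositions, and exhibit the latter as a product of two shifted copies of $\iota_n$ via the overlap-cancellation identity $\iota_n\iota_n' = T_1\cdots T_a T_{n+1}\cdots T_{n+a}$ with $a = r/2$. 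That cancellation computation is valid (the $T_j$ are pairwise disjoint commuting involutions, and the conjugation-invariance of the set of products of a given number of conjugates lets you pass freely between $\iota_r$ and its conjugates), and the parity and edge cases ($n=1$, $r=0$) are handled. What your approach buys is a self-contained argument that avoids Lemma \ref{lemma asymmetry} entirely and a sharper coarse-density constant, $\log 3$ instead of the paper's $2\log 3$; the paper's argument is shorter on the page because it reuses the already-established asymmetric estimates, and for the purposes of Proposition \ref{proposition coarsely dense} the difference in constants is immaterial.
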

\begin{proof}
Let $k$ be a positive odd integer, and let $l$ be a non-negative integer such that $3^l \le k < 3^{l+1}$. Then Lemma \ref{lemma three} implies that $d_{as}([\iota_k], [\iota_{3^l}]) \le \log 3$. 
Lemmas \ref{lemma three} and \ref{lemma asymmetry} imply that
\[ d_{as}([\iota_{3^l}], [\iota_k]) \le d_{as}([\iota_{3^l}], [\iota_{3^{l+1}}]) + d_{as}([\iota_{3^{l+1}}], [\iota_k]) \le 2 \log 3.\]
This completes the proof.
\end{proof}

Now we are ready to prove Proposition \ref{proposition coarsely dense}.

\begin{proof}[Proof of Proposition \ref{proposition coarsely dense}]
By Lemma \ref{lemma coarsely dense}, it suffices to show that $X'$ is coarsely dense in $S_\infty \setminus A_\infty$. Let $\sigma \in S_\infty \setminus A_\infty$. 
By Lemma \ref{lemma 11.10}, there exists a positive integer $k_0 \ge \lceil \nu(\sigma) / 6 \rceil$ such that $\iota_{k_0}$ is a product of three elements which are conjugate to $\sigma$. Since $\sigma$ is an odd permutation, $k_0$ is odd and we have $\iota_{k_0} \in X$. Hence we have
\[ d_{as}([\sigma], [\iota_{k_0}]) \le \log 3.\]

On the other hand, by Lemma \ref{lemma 11.13} there exists a positive integer $k_1 \le \nu(\sigma)$ such that $\sigma$ is a product of three elements which are conjugate to $\iota_{k_1}$. Since $\sigma$ is an odd permutation, $k_1$ is odd and we have $\iota_{k_1} \in X'$. Note $k_1 \le 9 k_0$. By the triangle inequality of $d_{as}$, we have
\begin{align*}
d_{as}([\iota_{k_0}], [\sigma]) & \le d_{as}([\iota_{k_0}], [\iota_{9k_0}]) + \iota_{as}([\iota_{9k_0}], [\iota_{k_1}]) + d_{as}([\iota_{k_1}], [\sigma]) \le 4 \log 3.
\end{align*}
Here we use Lemmas \ref{lemma three} and \ref{lemma asymmetry}. This completes the proof.
\end{proof}

\appendix

\section{Remarks on relation with other notions}\label{other_appen}
\subsection{Quasi-simplicity}
We slightly generalize the concept of quasi-simplicity in the sense of \cite{KM10}.
\begin{definition}
Let $h\colon G\times X \to X$ be an action of a group $G$ on a set $X$.
Then, $h$ is said to be \textit{fixed point free} if $\mathcal{O}(x) \neq \{x\}$
for every $x \in X$, where $\mathcal{O}(x) = \{\,h(g,x) \mid  g \in G\,\}$ is the orbit of $x$.
\end{definition}

\begin{definition}[\cite{KM10}]
A fixed point free action of $G$ on a set $X$ is said to be \textit{quasi-simple} if there is no non-trivial normal subgroup $N$ of $G$ such that the action restricted to $N$ is fixed point free.
\end{definition}

\begin{remark}
    In \cite{KM10}, Kowalik and Michalik considered 
    quasi-simple actions only in the case where $h$ is effective.
    
\end{remark}

By definition, we can easily confirm the following proposition which states that relative simplicity implies quasi-simplicity.
\begin{prop}\label{rs is qs}
    Let $M$ be a smooth manifold and $G$ a subgroup of $\mathrm{Diff}_0^c(M)$.
Assume that the universal covering $\tilde{G}$ is simple relative to $\pi_1(G)$ and the action $h\colon G\times M \to M$ is fixed point free, where $\pi\colon\tilde{G} \to G$ is the universal covering and $h$ is defined by $h(g,x) = \pi(g)(x)$.
Then, the action $h$ is quasi-simple.
\end{prop}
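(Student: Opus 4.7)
The plan is to deduce quasi-simplicity directly from the structure of normal subgroups of $\tilde{G}$ forced by relative simplicity. The key observation is that every proper non-trivial normal subgroup of $\tilde{G}$ must be contained in $\pi_1(G) = \ker\pi$, so its elements act as the identity on $M$ through $h$; such a subgroup can never act fixed point freely on $M$.

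More concretely, I would take an arbitrary proper non-trivial normal subgroup $N$ of $\tilde{G}$. By the definition of relative simplicity (Definition \ref{definition relatively simple}), $\pi_1(G)$ is the maximum proper normal subgroup of $\tilde{G}$, so $N\subseteq\pi_1(G)=\ker\pi$. Then for every $n\in N$ and every $x\in M$ one has $h(n,x)=\pi(n)(x)=x$, so the $N$-orbit of any $x\in M$ equals $\{x\}$; in particular the restricted action of $N$ fails to be fixed point free, which is precisely the requirement for quasi-simplicity.

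There is essentially no substantive obstacle; the only subtlety is a convention-level one. In the definition of quasi-simple action, the phrase ``non-trivial normal subgroup'' must be read as ``proper non-trivial normal subgroup'', since otherwise the ambient group $\tilde{G}$ itself, whose action is fixed point free by hypothesis, would automatically obstruct quasi-simplicity. Once this reading is fixed, the argument above is a one-line translation of the defining property of ``simple relative to'', and the same idea also works if one interprets the statement as concerning the $G$-action and normal subgroups of $G$: Proposition \ref{rel_simple_is_simple} yields that $G=\tilde{G}/\pi_1(G)$ is simple, so $G$ has no proper non-trivial normal subgroups and the quasi-simplicity condition holds vacuously.
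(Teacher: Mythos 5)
Your argument is correct and is exactly the intended one: the paper omits the proof, noting it follows directly from the definitions, and your reasoning (every proper non-trivial normal subgroup of $\tilde{G}$ lies in $\pi_1(G)=\Ker(\pi)$ by relative simplicity, hence acts trivially on $M$ and cannot act fixed point freely) is that definitional argument. Your remark that ``non-trivial normal subgroup'' must be read as ``proper non-trivial'' is a reasonable clarification of the convention and does not affect correctness.
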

By Proposition \ref{rs is qs}, our results (Theorems \ref{closed_rel_simpl}, \ref{smooth_rel_simpl} and \ref{vol_rel_simpl})
can be regarded as results on quasi-simplicity when the underlying manifold is closed.

\subsection{Maximal normal subgroup}

In this paper, we have mainly discussed the maximum normal subgroup, but it is natural to consider maximal normal subgroups. In this appendix, we describe the basic properties of maximal normal subgroups and discuss their relation with maximum normal subgroups.

\begin{definition}
Let $G$ be a group.
A proper normal subgroup $N$ of $G$ is called a \textit{maximal normal subgroup} if there is no normal subgroup $K$ of $G$ such that $N \subsetneq K \subsetneq G$.
\end{definition}

The following proposition is well known.
\begin{prop}
Let $G$ be a group.
A normal subgroup $N$ of $G$ is a maximal normal subgroup of $G$ if and only if $G/N$ is a simple group.
\end{prop}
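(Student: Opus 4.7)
The plan is to prove both directions via the correspondence (lattice isomorphism) theorem, which establishes an inclusion-preserving bijection between the normal subgroups of $G/N$ and the normal subgroups of $G$ containing $N$. Let $\pi \colon G \to G/N$ denote the quotient homomorphism.

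For the ``only if'' direction, I would assume that $N$ is maximal and show $G/N$ is simple. Suppose for contradiction that $G/N$ has a proper non-trivial normal subgroup $\bar{K}$. Setting $K = \pi^{-1}(\bar{K})$, the subgroup $K$ is normal in $G$ because $\bar{K}$ is normal in $G/N$ and preimages of normal subgroups under surjective homomorphisms are normal. Since $\bar{K}$ is non-trivial, $K$ strictly contains $N$; since $\bar{K}$ is proper, $K$ is strictly contained in $G$. This contradicts the maximality of $N$.

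For the ``if'' direction, I would assume that $G/N$ is simple and show $N$ is maximal in $G$. Suppose there exists a normal subgroup $K$ of $G$ with $N \subsetneq K \subsetneq G$. Then $\pi(K) = K/N$ is a normal subgroup of $G/N$, which is non-trivial because $K$ strictly contains $N$, and proper because $K$ is strictly contained in $G$ (using that $\pi^{-1}(\pi(K)) = K$ since $N \subset K$). This contradicts the simplicity of $G/N$.

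I do not anticipate any real obstacle: the only step requiring care is verifying that $\pi^{-1}(\pi(K)) = K$ in the ``if'' direction, which is immediate from $N \subset K$, and the corresponding verification $\pi(\pi^{-1}(\bar{K})) = \bar{K}$ in the ``only if'' direction, which is immediate from surjectivity of $\pi$. Both verifications are standard consequences of the correspondence theorem and require no additional hypotheses beyond those already in place.
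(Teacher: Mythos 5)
Your proof is correct; the paper states this proposition without proof (calling it well known), and your correspondence-theorem argument in both directions is exactly the standard one it implicitly relies on. The only point worth adding, since the paper's definition requires a maximal normal subgroup to be \emph{proper} and simplicity of $G/N$ includes nontriviality, is the trivial check that maximality gives $N \neq G$ hence $G/N \neq \{1\}$, and conversely simplicity gives $G/N \neq \{1\}$ hence $N$ proper; with that remark your argument is complete.
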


By definition, it is clear that the maximum normal subgroup is a maximal normal subgroup.

Here we note that there is a group $G$ such that $G$ has no maximal normal subgroup. For example, the additive group $\QQ$ is such an example.

Moreover, there is a group $G$ such that $G$ has a unique maximal normal subgroup but $G$ is not relatively simple. Such an example is provided by $\QQ \times (\ZZ / p\ZZ )$ for a prime number $p$. 
To see this, since $\ZZ / p\ZZ $ is a simple  group, $\QQ \times \{ 0\}$ is a maximal (normal) subgroup of $\QQ \times (\ZZ / p\ZZ )$. However, $\QQ \times \{ 0\}$ is not maximum since $\{ 0\} \times (\ZZ / p\ZZ )$ is a normal subgroup of $\QQ \times (\ZZ / p\ZZ )$ which is not contained in $\QQ \times \{ 0\}$. 
Finally, we will see that $\QQ \times \{ 0\}$ is the unique maximal normal subgroup of $\QQ \times (\ZZ / p\ZZ )$. Let $N$ be a maximal normal subgroup of $\QQ \times (\ZZ / p\ZZ )$. Then $(\QQ \times (\ZZ / p\ZZ )) / N$ is an abelian simple group and hence is isomorphic to $\ZZ / q\ZZ $ for some prime number $q$. Hence there is a surjective group homomorphism $f \colon \QQ \times (\ZZ / p\ZZ ) \to \ZZ / q\ZZ $ with $\Ker(f) = N$. Then the kernel of $f$ contains $\QQ \times \{ 0\}$. Indeed, for every $x \in \QQ$, we have
\[f(x,0) = q \cdot f(x/q, 0) = 0.\]
Thus we have that $f(x,0) = 0$ for every $x \in \QQ$. Hence $\QQ \times \{ 0\} \subset \Ker(f) = N$. Since $\QQ \times \{ 0\}$ is maximal, we have $\Ker(f) = \QQ \times \{ 0\}$. This completes the proof.

As can be seen from the example of $\QQ$, not all groups have a maximal normal subgroup. This is not similar to the case of maximal ideals in rings. However, as shown below, non-trivial finite normally generated groups have maximal normal subgroups.

\begin{theorem}
Let $G$ be a group having finite normal generators $\{ x_1, \cdots, x_n\}$, and $N$ a proper normal subgroup of $G$. Then there is a maximal normal subgroup of $G$ containing $N$.
\end{theorem}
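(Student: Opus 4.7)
The plan is to invoke Zorn's lemma on the poset of proper normal subgroups of $G$ containing $N$, ordered by inclusion. A maximal element of this poset is exactly what we want, namely a maximal normal subgroup of $G$ containing $N$. Since $N$ itself belongs to the poset, it is non-empty, so the only thing to verify is that every chain has an upper bound.

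Let $\{N_\lambda\}_{\lambda \in \Lambda}$ be a chain of proper normal subgroups of $G$ each containing $N$, and set $\widetilde{N} = \bigcup_{\lambda \in \Lambda} N_\lambda$. Because the $N_\lambda$ form a chain, $\widetilde{N}$ is a subgroup of $G$; it is clearly normal and contains $N$. The crucial point, and the only place where the finite normal generation hypothesis is used, is to verify that $\widetilde{N}$ is proper. Suppose to the contrary that $\widetilde{N} = G$. Then each of the finitely many normal generators $x_1, \ldots, x_n$ lies in $\widetilde{N}$, so for each $i$ there is some $\lambda_i \in \Lambda$ with $x_i \in N_{\lambda_i}$. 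Since $\Lambda$ indexes a chain and there are only finitely many $\lambda_i$, one of them, say $\lambda^*$, dominates the others, and all $x_i$ lie in $N_{\lambda^*}$. Because $N_{\lambda^*}$ is normal and $\{x_1, \ldots, x_n\}$ normally generates $G$, we conclude $N_{\lambda^*} = G$, contradicting the properness of $N_{\lambda^*}$.

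Hence $\widetilde{N}$ is a proper normal subgroup containing $N$, and is an upper bound for the chain in the poset. Zorn's lemma then furnishes a maximal element, which is a maximal normal subgroup of $G$ containing $N$.

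I do not anticipate any serious obstacle; the argument is the standard Zorn's lemma construction, and the only subtlety is that we need a condition that prevents the union of an ascending chain of proper normal subgroups from equaling $G$. Finite normal generation provides exactly this, by a pigeonhole-type argument in the chain. The assumption cannot be dropped in general, as the example $G = \mathbb{Q}$ (with $N = \{0\}$) from the preceding discussion already shows.
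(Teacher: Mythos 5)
Your proof is correct. The poset of proper normal subgroups of $G$ containing $N$ is non-empty (it contains $N$), a maximal element of it is exactly a maximal normal subgroup of $G$ containing $N$, and your pigeonhole step is sound: if the union of a chain contained every $x_i$, then by finiteness and total ordering one chain member would contain all the normal generators and hence, being normal, would equal $G$, contradicting properness. However, your route differs from the paper's. The paper applies Zorn's lemma to a different poset: it fixes one normal generator $x_i \notin N$ and maximizes over normal subgroups that contain $N$ and \emph{avoid} $x_i$; for that poset chain unions are automatically admissible (they still avoid $x_i$), so no finiteness is needed at the Zorn step. The price is an iteration: if the resulting $N'$ is not a maximal normal subgroup of $G$, any strictly larger proper normal subgroup must contain $x_i$, and one repeats the procedure with a generator outside the new subgroup; each stage absorbs at least one further generator, so the process stops after at most $n$ steps. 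Thus in your argument finite normal generation is used exactly once, to keep chain unions proper, yielding a one-shot Zorn argument (the standard one, analogous to the existence of maximal subgroups in finitely generated groups), whereas the paper uses it only to bound the number of iterations and sidesteps the properness-of-unions verification by the element-avoidance trick. Both are complete proofs; yours is arguably the more direct.
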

\begin{proof}
Since $N$ is proper, there is $x_i$ not contained in $N$. By Zorn's lemma there is a normal subgroup $N'$ of $G$ such that $N \subset N'$, $x_i \not\in N'$ and $N'$ is maximal among such normal subgroups. 
If $N'$ is not a maximal normal subgroup of $G$,  then there is a proper normal subgroup $N_1$ of $G$ such that $N' \subsetneq N_1$.
It follows from the maximality of $N'$ that $N_1$ contains $x_i$. Apply the same procedure to $N_1$. Since the number of normal generators is finite, this process will terminate after a finite number of steps, reaching a maximal normal subgroup.
\end{proof}

\bibliographystyle{amsalpha}
\bibliography{Kodama_ref}

\end{document}